\title{The number of particles in activated random walk on the complete graph%
  \thanks{L.T.\ thanks the German Research Foundation (project number 444084038,
    priority program SPP2265) for financial support. The research of C.M.\ was
    in large parts funded by German Research Foundation (project number 443916008,
    priority program SPP2265).}}
\author{%
  Antal A.~J\'arai\thanks{University of Bath, UK.
    \texttt{a.jarai@bath.ac.uk}}\,\orcidlink{0000-0003-3522-498X}
  \and
  Christian M\"onch\thanks{Unaffiliated.
    \texttt{cmoench25@gmail.com}}\,\orcidlink{0000-0002-6531-6482}
  \and
  Lorenzo Taggi\thanks{Sapienza Universit\`a di Roma, Italy.
    \texttt{lorenzo.taggi@uniroma1.it}}\,\orcidlink{0000-0002-7085-9764}
}
\date{arXiv:2304.10169}
\newcommand{\cF}{\mathcal{F}}
\renewcommand{\P}{P_N}
\newcommand{\E}{E_N}
\newcommand{\Var}{\mathrm{Var}}
\newcommand{\eps}{\varepsilon}
\def\bfone{\mathbf{1}}
\newcommand{\eqn}[2]{\begin{equation}\label{#1}#2\end{equation}}
\newcommand{\eqnst}[1]{\begin{equation*}#1\end{equation*}}
\newcommand{\eqnspl}[2]{\begin{equation}\begin{split}\label{#1}%
			#2\end{split}\end{equation}}
\newcommand{\eqnsplst}[1]{\begin{equation*}\begin{split}%
			#1\end{split}\end{equation*}}
\begin{document}
\maketitle
\raggedbottom

\begin{abstract}
	We consider an elementary model for self-organised criticality,
	the activated random walk on the complete graph.
	We introduce a discrete time Markov chain as follows.  At each time step
	we add an active particle at a random vertex and let the system stabilise following the activated random walk dynamics,
	obtaining a particle configuration with all sleeping particles.
	Particles visiting a boundary vertex are removed from the system.
	We characterise the support of the stationary distribution of this Markov chain,
	showing that, with high probability, the number of particles
	concentrates around the value  $\rho_c N + a \sqrt{N \log N}$
	{with fluctuations} of order at most $o( \sqrt{N \log N })$,
	where $N$ is the number of vertices. Due to the mean-field nature of the model, we are able to determine precisely the critical density $\rho_c= \frac{\lambda}{1+\lambda}$, where $\lambda$ is the sleeping rate, as well as the constant $a =  \sqrt{\lambda} / (1 + \lambda) $ characterising the lower order shift. {Our approach utilises results about super-martingales associated with activated random walk that are of independent interest.}
\end{abstract}

\section{Introduction and main results}
Self-organized criticality (SOC) is a property of dynamical systems that have a critical point as an attractor. The concept was put forward by
Bak, Tang and Wiesenfeld \cite{Bak}, and has been applied across different fields to explain a variety of physical phenomena. %{Change or add something?}
%CM: We can expend on the background a little further when revising the manuscript

Together with models for `sandpile'-type dynamics, like the Abelian sandpile, the stochastic sandpile model, and the Bak-Sneppen evolution model, the activated random walk (ARW) is one of the models which has been proposed to provide a mathematically rigorous description of SOC \cite{Dickman, Rolla}. It is a conservative particle system with two types of particles, active (A) and sleeping (S). Active particles perform a continuous time simple random walk on a graph. In addition, to each active particle is associated an independent Poisson clock with rate $\lambda$. Whenever a particle's clock rings, the particle turns into the S-state if it does not share its location with another active particle, in which case nothing happens. Each S-particle does not move and switches into the A-state whenever its location is visited by another A-particle. Consequently, any configuration with only S-type particles is an absorbing state for the induced Markovian dynamics.

As highlighted in the recent survey of Levine and Silvestri \cite{LevineSilvestri}, 
in contrast with the `sandpile'-type dynamics of the original example proposed in \cite{Bak}, to which 
a lot of attention has been given \cite{Dharsurvey,Jsurvey}, the ARW model 
appears to have better universality properties, and therefore can serve as a more suitable
model of SOC. Indeed, key features of ARW are believed to be independent of the underlying lattice 
or details of initial conditions, one rigorous example being the remarkable result of 
Rolla, Sidoravicius and Zindy \cite{Rolla3} showing that stabilisability properties 
only depend on the particle density.

Previous research on the activated random walk model mostly considers the dynamics on $\mathbb{Z}^d$ or, more generally, on infinite vertex-transitive graphs. In this case the model does not exhibit self-organised critical behaviour, instead it undergoes an ordinary \emph{absorbing-state phase transition.} More precisely, at time zero, take the particle configuration to be distributed according to a product of Poisson distributions with parameter $\rho \in (0, \infty)$, the \emph{particle density.} If the particle density is below  a certain critical threshold, $\rho_c = \rho_c(\lambda)$, the system fixates, i.e.\ each vertex gets visited finitely many times almost surely. Instead, if the particle density is above the critical threshold, each vertex gets visited infinitely many times almost surely.

Significant efforts have been devoted to establishing various properties of the critical curve $\rho_c = \rho_c(\lambda)$ on various graphs, such as non-triviality \cite{Shellef,  Sidoravicius, Stauffer},
continuity \cite{Taggi3}, universality \cite{Rolla3}, that it is strictly below one for each $\lambda \in (0, \infty)$ \cite{AsselahForienGaudilliere2024, Basu, Forien, HoffmanRichey,  Rolla2,  Stauffer, Taggi1, Taggi2}, and to provide some asymptotic bounds when taking the limits $\lambda \rightarrow 0$ or $\lambda \rightarrow \infty$ \cite{AsselahForienGaudilliere2024,  Asselah, kaufman2025asymptoticbehaviorcriticaldensity}.

In order to observe self-organised critical behaviour, however, one should consider the activated random walk model in a different setting, namely on a large finite subset of a graph under the addition of active particles. More precisely, active particles are added to a random vertex of the set and particles are absorbed at the boundary of this set during relaxation. Every time a new active particle is added, one waits until the system globally stabilises, i.e.\ all particles fall asleep or reach the boundary, where they are removed from the system
(this is called the \emph{wired chain} in the terminology of \cite{LevineSilvestri}; 
related dynamics have also been studied in \cite{Bristiel,LevineLiang} in connection with mixing-time bounds). 
If the average density of particles is too large, then the addition of {several} active particles causes an intense activity, and a huge number of particles is absorbed at the boundary. On the other hand, if the average density of particles is too small, then the addition of an active particle does not activate many other particles, thus mass tends to accumulate. With this carefully designed mechanism, the model is attracted to a stationary state with an average density that should approach $\rho_c$, as the finite volume exhausts the graph, $\rho_c$ being the point at which the infinite system displays critical behaviour. 

The present paper provides a rigorous description of this mechanism for the activated random walk model on the complete graph and provides a characterisation of the support of the invariant distribution of the corresponding Markovian dynamics. We show that the number of particles concentrates with high probability around a value $\rho_c N + a \sqrt{N \log N}$,  {with fluctuations of order at most} $o(\sqrt{N\log N})$, where $N$ is the number of vertices in the underlying graph. We also determine the critical value $\rho_c = \frac{\lambda}{1+\lambda}$ of the particle density on the complete graph, and the lower order shift $a = \sqrt{\lambda}/(1+\lambda)$. As far as we know, together with the cases of totally asymmetric jumps on $\mathbb{Z}$ \cite[Theorem 1]{Cabezas} {and directed jumps on $\mathbb{Z}$ \cite[Theorem 3.2]{RollaSurv}}, this is the only case in which the value of the critical density for the activated random walk model is known \textit{exactly}. In addition, the presence of the lower order shift, that we also calculate, is a feature that should be present for ARW on finite subsets of $\mathbb{Z}^d$ as well. The shift plays an important role in the ARW dynamics, and we believe that the arguments we develop in this paper to study it gives useful insight into a potential spatial analogue in $\mathbb{Z}^d$, at least at a heuristic level. 
We note that an analogous shift in density for the Abelian sandpile on the complete graph and on finite subsets of $\mathbb{Z}^2$ is also present \cite{FeyLevineWilson,JaraiElvidge}. Indeed, an  infinitesimal super-criticality appears to be a general feature of SOC, see the detailed study of the forest-fire model on the complete graph by R\'ath and T\'oth \cite{RathToth}.
We also note that the recent computation of the exact critical density for the stochastic sandpile model on the complete graph \cite{CampaillaForien2025}, which appeared after the publication of our preprint, is closely related in spirit to the present work. 
Moreover, a recent result \cite{JungeKaufmanMeisel2025}, also obtained after the appearance of our preprint, shows that the critical density for activated random walk on $\mathbb{Z}^d$ approaches, as $d\to\infty$, the critical value on the complete graph, namely $\lambda/(1+\lambda)$, which is precisely the value computed in the present paper.

Let us now define the ARW Markov chain formally. We use $\mathcal{G}_{N+1} = (V_{N+1}, E_{N+1})$ to denote the complete graph with $N+1$ vertices, { no self-loops and no multiple edges}. We distinguish a vertex $\partial V_{N+1} \in V_{N+1}$ that plays the role of the \emph{boundary}. We further write $\mathring{\mathcal{G}}_{N+1}=(\mathring V_{N+1}, \mathring E_{N+1})$ for the sub-graph of $\mathcal{G}_{N+1}$ induced by the non-boundary vertices, which has $N$ vertices. The dynamics take place in $\mathcal{G}_{N+1} = (V_{N+1}, E_{N+1})$. A particle configuration can be represented as \smash{$\xi \in \Sigma:=  \{ 0, s \}^{ {\mathring{V}_{N+1}}}$}, where  $\xi(x) =  s$ denotes the presence of a sleeping particle at $x$,
while $\xi(x) = 0$ means that the vertex $x$ is empty.

We initiate the system in an arbitrary configuration of only sleeping particles, $\xi_0 \in \Sigma$. At each time step $n \geq 1$, we add an active particle to a vertex uniformly chosen in $\mathring{V}_{N+1}$ 
(if this vertex already hosts a sleeping particle, the latter is activated) and then let the system evolve according to the continuous time activated random walk dynamics defined above. Whenever an active particle jumps to the boundary vertex, it instantaneously disappears from the system. After an almost surely finite time we obtain a stable configuration, i.e.\ all particles in $ \mathring{V}_{N+1}$ are in the S-state
and some particles may possibly have left the set $ \mathring{V}_{N+1}$ and have disappeared from the system  by jumping to the boundary vertex. We  identify such a stable particle configuration with an element $\xi_{n} \in \Sigma.$

The sequence of random variables $\xi_0, \xi_1, \xi_2, \ldots$ thereby defined is a discrete time Markov chain on the finite configuration space $\Sigma$.
%We let $\Upsilon_n$ be the distribution of the particle configuration $\xi_n$. 
It is straightforward to see, that the chain is irreducible and aperiodic and thus converges to a stationary distribution, $\bar\mu = \bar\mu^{(N)}$,
which does not depend on the initial particle configuration. { Due to the complete symmetry of the model, it suffices to consider the distribution $\mu=\mu^{(N)}$ of the total number of particles under $\bar\mu$. A $\bar\mu$ distributed configuration can be sampled by first drawing sleeping particles according to $\mu$ and then assigning them locations in $\mathring V_{N+1}$ uniformly without replacement.}

Our main results, Theorems \ref{thm:main} and \ref{thm:shift} below, give a detailed characterisation of the support of the stationary distribution $\mu$. Note that although Theorem \ref{thm:shift} supersedes Theorem \ref{thm:main}, we prefer to state both, for two reasons. On the one hand, Theorem~\ref{thm:main} is easier to prove, and on the other hand, the upper bound on the support provided by Theorem \ref{thm:main} is used as input in the proof of the more precise bounds of Theorem \ref{thm:shift}.

%Our first theorem is a law of large numbers for the number of sleeping particles, with an error estimate. Let us denote by $\mu^N=(\mu^N_k, k\in\mathbb{N}_0=\{0,1,2,\dots\})$ the distribution of the number of sleeping particles at stationarity in $\mathcal{G}_{N+1}$.

\begin{theorem}[Law of large numbers]
	\label{thm:main}
	There exists some sequence $(\alpha_N)_{N\in\mathbb{N}}$ depending only on $\lambda$ and satisfying $\alpha_N=O(\sqrt{N\log N})$, such that,
	$$
	\lim_{N \to \infty} \mu^{(N)} 
	 \Big [ \, \rho_c N- \alpha_N,\rho_c N + \alpha_N \, \Big ]  = 1,
	$$ 
	where 
	$$
	\rho_c =  \frac{\lambda}{1+ \lambda}.
	$$
	
\end{theorem}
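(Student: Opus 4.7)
The plan is to reduce the statement to a one-dimensional concentration for the scalar Markov chain $K_n:=|\xi_n|$ counting sleeping particles. Because the ARW transition rule is invariant under permutations of $\mathring V_{N+1}$, the stationary measure $\mu^{(N)}$ is exchangeable and is determined by its push-forward under $|\cdot|$; by the same symmetry $(K_n)$ is itself a Markov chain on $\{0,\dots,N\}$, and from conservation of mass during stabilisation we obtain the deterministic identity
\[
K_{n+1}=K_n+1-A_{n+1},
\]
where $A_{n+1}\ge 0$ is the number of absorptions in step $n+1$. Stationarity forces $E_{\mu}[A]=1$, so the task reduces to identifying $k^\ast$ with $E[A\mid K=k^\ast]=1$ and quantifying the drift $D(k):=1-E[A\mid K=k]$ away from $k^\ast$.

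The value of $k^\ast$ and the shape of $D$ can be read off a branching-process coupling of the stabilisation. Using the Abelian property and the Diaconis--Fulton construction (with i.i.d.\ instructions that are ``sleep'' with probability $p_s:=\lambda/(1+\lambda)$ and otherwise ``jump to a uniform neighbour''), I would topple in LIFO order so that each arrival of an active particle at a non-boundary vertex $v$ produces $Y_v$ outgoing particles, with $Y_v\in\{0,1\}$ if $v$ was empty and $Y_v\in\{1,2\}$ if $v$ held a sleeping particle, and with
\[
E[Y_v\mid \text{empty}]=1-p_s,\qquad E[Y_v\mid \text{sleeping}]=2-p_s;
\]
each outgoing particle lands on a uniform neighbour, so with probability $1/N$ it is absorbed. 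Modelling each landing vertex as sleeping with probability $\rho=k/N$, the per-visit offspring mean for non-boundary continuations is
\[
\mu(\rho)=\frac{N-1}{N}\bigl(1-p_s+\rho\bigr),
\]
equal to $1-1/N$ precisely when $\rho=p_s=\lambda/(1+\lambda)$. Writing $T$ for the total number of visits, the deterministic identity $A=\sum_v Y_v-T+1$ combined with Wald's identity gives, in this branching approximation, $E[A]=1+E[T](\rho-p_s)$; at $\rho=p_s$ one has $E[T]=1/(1-\mu)=N$ and hence $E[A]=1$, matching the stationary balance and pinpointing $\rho_c=\lambda/(1+\lambda)$.

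Concentration with window $\alpha_N=O(\sqrt{N\log N})$ should then follow from restoring drift plus fluctuation control. For $k=\rho_c N+x$, $\mu\approx 1+(x-1)/N$, so the exploration is subcritical below $k^\ast$ (with $E[T]\approx N/(1-x)$) and only mildly supercritical above it (with the a priori saturation bound $T\le K_n+1$ coming from the finite reservoir of sleepings). Standard Galton--Watson estimates give a restoring drift $D(k)$ of the right sign and magnitude outside the target window, together with an $O(1)$ bound on the variance of $A_{n+1}$; a classical Foster--Lyapunov/hitting-time argument (using for example a Freedman-type inequality for the martingale $K_n-\sum_{j\le n}(1-E[A\mid K_j])$) then shows that from any state outside $[\rho_c N-\alpha_N,\rho_c N+\alpha_N]$ the chain re-enters the window with probability $1-o(1)$ in a time negligible on the relevant scale, which converts into the asserted $\mu^{(N)}[\rho_c N-\alpha_N,\rho_c N+\alpha_N]\to 1$.

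The hard part will be the rigorous control of the branching-process approximation: once a sleeping vertex has been woken, it contributes only empty-type offspring at future visits, and outgoing particles may land on previously-visited vertices or on the currently-firing vertex itself, so the pure Galton--Watson tree only over-approximates the true exploration. Making the corresponding drift estimate quantitatively sharp requires two-sided couplings with adjusted offspring distributions and a priori bounds on the exploration size; this is presumably where the super-martingales alluded to in the abstract enter, likely encoding conservation identities of the ARW exploration directly and thereby bypassing the need to propagate coupling errors through the branching tree.
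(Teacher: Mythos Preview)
Your route is genuinely different from the paper's. You work with the wired chain $(K_n)$ and try to control its one-step drift $D(k)=1-E[A\mid K=k]$ via a branching-process approximation of a single stabilisation. The paper instead uses the Abelian property to show (their Lemma~2.1) that $\mu^{(N)}$ is exactly the law of the particle count after stabilising from the \emph{fully active} configuration; this converts the problem into analysing one long stabilisation trajectory, which they encode as a bivariate microscopic chain $(X_t,Y_t)$---total and active particle counts after each elementary toppling---with closed-form transition probabilities. The deviation process $S_t=Y_t-(1+\lambda)X_t+\lambda N$ measures displacement from the line $(x-y)/(N-y)=\rho_c$, and the super-martingales you guess at are exponential moments of $S_t$ computed directly from those formulas. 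Your Wald identity $E[A]=1+E[T](\rho-p_s)$ recovers the same $\rho_c$ heuristically, but the paper's representation is exact where yours is approximate, and that is what buys them a proof.

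There is a real gap beyond the coupling issue you flag. The claim of an $O(1)$ bound on $\mathrm{Var}(A_{n+1})$ near $k^\ast$ is not correct in your own branching model: at $\rho=\rho_c$ the offspring mean is $1-1/N$, so the subcritical Galton--Watson variance formula gives $\mathrm{Var}(T)\sim\sigma^2 N^3$, and even after imposing the finite-reservoir cutoff $T\lesssim k$ one gets $\mathrm{Var}(T)$ of order $N^{3/2}$ and hence $\mathrm{Var}(A)$ growing with $N$. A Freedman-type bound with this predictable variation does not by itself yield the $O(\sqrt{N\log N})$ window. Relatedly, the chain $(K_n)$ has unbounded downward jumps (a single near-critical avalanche can absorb many particles), which complicates the Foster--Lyapunov step you invoke without detail.

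The paper's $(X_t,Y_t)$ representation dissolves both difficulties: there the increments of $S_t$ are bounded below by $-1$ and have an explicit moment generating function, so an exponential super-martingale argument (their Lemma~5.2 and Proposition~3.2) gives the deviation bounds directly, with no branching approximation and no need to control the variance of whole-avalanche quantities. The key idea you are missing is precisely the Lemma~2.1 reduction; once you have it, the drift-of-$K_n$ analysis becomes unnecessary.
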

In other words, independently of the initial number of particles
at time zero (which may be much larger or smaller than  $\rho_c N$),
the system converges to a critical state in which  the number of particles \textit{concentrates}
around the value 
$\frac{\lambda}{1+ \lambda} N$
{with fluctuations  of order at most} $O( \sqrt{ N \log N}   )$, and thus much smaller than the number of particles.

Our second theorem characterises the lower order shift in the number of sleeping particles more precisely and in particular shows that, at stationarity, the particle density is slightly \emph{above} $\rho_c$.

\begin{theorem}[Lower order shift]
	\label{thm:shift} \ \\
	For any $\eps>0$, we have
	\eqnst
	{ \lim_{N \to \infty} \mu^{(N)} \, \Big [ \rho_c N+ (a-\eps)\sqrt{N\log N}, \rho_c N + (a+\eps)\sqrt{N\log N} \Big ]
		= 1, }
	where
	$$a =\frac{\sqrt{\lambda}}{1+\lambda}.$$  
\end{theorem}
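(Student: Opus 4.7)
My approach is to sharpen Theorem \ref{thm:main} by a precise analysis of the one-step drift of the Markov chain $(K_n)$, where $K_n$ denotes the number of sleeping particles at time $n$. The critical conservation law is that each step adds exactly one particle, whence $\Delta K_n := K_{n+1} - K_n = 1 - B_n$, with $B_n$ the (random) number of particles absorbed at the boundary during the stabilization that follows the $n$-th addition. The drift is therefore $g(K) := E[\Delta K_n \mid K_n = K] = 1 - E[B \mid K]$, and the stable level of the chain is the value $K^\ast$ where $g(K^\ast) = 0$. The goal is to localize $K^\ast$ at $\rho_c N + a\sqrt{N\log N}$ up to additive error $o(\sqrt{N\log N})$.

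The first step is to obtain a sharp asymptotic expression for $E[B \mid K]$ when $K = \rho_c N + y$ with $|y| = O(\sqrt{N\log N})$, a range guaranteed by Theorem \ref{thm:main}. Heuristically, the stabilization following a particle addition is an avalanche in which active particles walk on $\mathcal{G}_{N+1}$, wake sleepers, fall asleep, or reach the boundary. Because of the mean-field nature of the complete graph, the size $A_t$ of the active cloud should be well approximated by a birth-death chain whose drift depends on the current density of sleepers. One expects $E[B \mid K]$ to cross $1$ sharply as $y$ passes through $a\sqrt{N\log N}$. The identity $a^2 = \rho_c(1-\rho_c)$ strongly suggests that $a$ arises as the standard deviation in a central limit theorem for the number of sleeper-vertices visited by the avalanche, and the factor $\sqrt{\log N}$ reflects the excess above the $\sqrt{N}$ CLT scale needed to drive the avalanche through the boundary with probability bounded away from zero.

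Granted such sharp drift bounds, namely $g(K) \ge c(\varepsilon) > 0$ for $K < \rho_c N + (a-\varepsilon)\sqrt{N\log N}$ and $g(K) \le -c(\varepsilon) < 0$ for $K > \rho_c N + (a+\varepsilon)\sqrt{N\log N}$, the theorem follows by a standard Foster-Lyapunov argument. A convenient Lyapunov function is $F(K) = (K - \rho_c N - a\sqrt{N\log N})^2$, which is a supermartingale outside the target window; combined with a trivial one-step increment bound and the irreducibility of the chain, this forces the stationary distribution to concentrate within the window.

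The main obstacle is the precise drift computation at scale $\sqrt{N\log N}$. Whereas the softer comparison argument underlying Theorem \ref{thm:main} tolerates errors of order $\sqrt{N\log N}$, here one must resolve phenomena at the exact fluctuation scale, with error smaller than the CLT spacing. This requires the coarse-graining of the avalanche dynamics together with the super-martingales for ARW that the authors develop (as announced in the abstract), providing control over deviations of the stabilization from its mean-field prediction precisely enough to identify the constant $a$ and pin down the lower-order shift.
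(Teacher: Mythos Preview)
Your approach differs substantially from the paper's, and it leaves the decisive step unjustified.

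The paper does \emph{not} analyse the drift of the wired chain $(K_n)$. Instead, it exploits the abelian property (Lemma~\ref{lem:statdis}) to identify $\mu^{(N)}$ with the law of $X_{T^+}$ for the internal chain $(X_t,Y_t)$ started from the fully active state $(N,N)$. The theorem is then reduced to Theorems~\ref{thm:scaling-lower} and~\ref{thm:scaling-upper}, which locate where a \emph{single} stabilisation lands. The constant $a$ does not come from a CLT for visited sleepers; it emerges from a moderate-deviation computation for the Ornstein--Uhlenbeck-like process $S_t=Y_t-\ell(X_t)$: the probability that $Y$ hits $0$ before $S$ reaches $\sqrt N$ is shown (via a coarse-grained birth--death coupling and explicit resistance estimates, Proposition~\ref{prop:hit-0}) to be $N^{-\frac{1}{2\lambda}(1+\lambda)^2\hat X^2+o(1)}$, and the exponent crosses $1/2$ exactly at $\hat X=a$. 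An iterative stopping-time scheme (Section~\ref{ssec:iterate}) then converts this dichotomy into the two-sided window.

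Your drift/Lyapunov route is not obviously wrong, but it has a genuine gap. First, the claimed uniform bounds $g(K)\ge c(\eps)$ and $g(K)\le -c(\eps)$ are the whole content of the theorem and you do not derive them; the paper's tools analyse stabilisation from large $Y$, not the one-particle avalanche underlying $E[B\mid K]$, so ``deferring to the coarse-graining'' does not close the argument. Second, the increments $\Delta K_n=1-B_n$ are extremely asymmetric: most steps give $\Delta K_n=1$, while rare take-off avalanches give $B_n$ of order $\sqrt{N\log N}$. A back-of-the-envelope computation using the paper's own estimates shows that near $K=\rho_c N+(a+\eps)\sqrt{N\log N}$ one has take-off probability of order $\sqrt{\log N/N}$ and avalanche size of order $\eps\sqrt{N\log N}$, so $\mathrm{Var}(\Delta K_n\mid K)$ is of order $\sqrt N(\log N)^{3/2}$, the \emph{same} order as $(K-K^\ast)g(K)$. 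The quadratic Lyapunov drift is therefore borderline, and establishing that it is negative with the right constant is precisely as hard as the resistance computation the paper carries out. In short, the Foster--Lyapunov wrapper is easy; what is missing is exactly the sharp hitting-probability estimate that the paper supplies by other means.
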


The {lower order shift constant} $ a = \frac{\sqrt{\lambda}}{1+\lambda}$ has in fact an interpretation in terms of moderate deviations of an Ornstein-Uhlenbeck process that appears as scaling limit of appropriately recentered particle counts in our model. 
We discuss this connection  in Section~\ref{ssec:outline}.

\section{The induced chain of particle counts}
In order to characterise the stationary distribution $\mu = \mu^{(N)}$ of the Markov chain $\xi = (\xi_t)_{t \in \mathbb{N}_0}$, we first introduce a new `microscopic' chain $(\eta_t)_{t \in \mathbb{N}_0}$, which has the same stationary distribution as $\xi$. We then argue that $\eta$ can be reduced to a simple bivariate process that keeps only track of particle counts but not of their locations. % but can be analysed and we study the evolution of $(\eta_t)_{t \in \mathbb{N}_0}$.

The Markov chain $(\eta_t)_{t \in \mathbb{N}_0}$ is defined as follows: The dynamics take place on $\mathcal{G}_{N+1}$.  Particles are either \emph{asleep (S-particles)} or \emph{active (A-particles)}. The state of the system at time $t$, $t \in \mathbb{N}_{0}$, is denoted by $\smash{\eta_t \in \{0,  s, 1\}^{\mathring V_{N+1}}}$, where $\eta_t(x) =1$ 
(resp. $\eta_t(x) = s$) if there is an active  (resp. sleeping) particle at vertex $x$, while $\eta_t(x)=0$ if there is no particle at $x$. Note that  there is at most one particle per vertex, which might be an S-particle or an A-particle. The single parameter that moderates the active-sleep-transition is the \emph{sleeping rate} $\lambda>0$. The initial configuration is given by $\eta_0=(1,\dots,1)$, i.e.\ there is an active particle at each vertex.

\textit{Update dynamics for $\eta$.} To update the system from $\eta_t$ to $\eta_{t+1}$ we use the following rules: If the configuration $\eta_t$ contains no A-particle, then $\eta_{t+1}=\eta_t$ and we say that \emph{stabilisation} has occurred. If there is at least one A-particle, then we choose a site $x$ occupied by an A-particle uniformly at random. With probability $\smash{\frac{\lambda}{1+\lambda}}$, the particle falls immediately asleep, we set $\eta_{t+1}(x)=s$ and $\eta_{t+1}(y)=\eta_{t}(y), y\neq x$. 
With probability $\smash{\frac{1}{1+\lambda}}$, the chosen particle moves according to a simple random walk on $\mathcal{G}_{N+1}$ until it hits an unoccupied vertex or the boundary. 
If the boundary is hit, the particle is removed from the system.
If an unoccupied site $y$ is hit, then we set $\eta_{t+1}(y)=1$, i.e.\ the particle remains there.  In either case, all occupied sites that were visited by the moving particle and contain an S-particle change their status from $s$ to $1$, i.e.\ the sleeping particles on the path of the chosen active particle are woken up to obtain $\eta_{t+1}$.

\textit{Reduction to particle counts.} We now associate to each realisation of the interacting particle system two variables which count the total number of particles, $X_t$, and the total number of active particles $Y_t$. More precisely, these variables are defined as follows,
$$
X_t := \sum\limits_{x \in \mathring V_{N+1}} \mathbf{1}\{\eta_t(x) \neq 0\},\quad Y_t := \sum\limits_{x \in \mathring V_{N+1}} \mathbf{1}\{\eta_t(x) =1 \},\quad t\in\mathbb{N}_0.
$$
Thus, the total number of S-particles at time $t$ is $X_t - Y_t$ and the total number of empty non-boundary vertices is $N - X_t$.
Recall that due to the complete symmetry of our model, $(X, Y)$ encodes the behaviour of $\eta$ up to a trivial labelling operation: if $\eta'_t$ is obtained from $(X_t,Y_t)$ by uniformly assigning to each particle a site (without replacement), then $\smash{\eta'_t \overset{d}{=}\eta_t}$. All our results about $\mu$ are obtained in the following sections by carefully analysing the behaviour of $(X,Y)$. The crucial connection between $(X,Y)$ and the stationary distribution of $\xi$ is formalised in Lemma~\ref{lem:statdis} below. Namely, if we start with a full system of $N$ active particles and stabilise, then the stabilised state is stationary, as observed also in \cite{LevineLiang}.

Here and throughout the remainder of the article, we denote by $P_N$ and $E_N$ the law and expectation, respectively, of the Markov chain $(X,Y)$ on $\mathcal{G}_{N+1}$ and set $\mathcal{F}_t=\sigma((X_s,Y_s), 0\leq s\leq t)$. %We begin by relating the stationary distribution $\mu$ of $(\xi_t)_{t \in \mathbb{N}_0}$ to the chain $(X,Y)$. 

\begin{lemma}\label{lem:statdis}
	The stationary distribution $\mu=(\mu_k^{(N)})_{0\leq k\leq N}$ of $(\xi_t)_{t \in \mathbb{N}_0}$ has the representation
	\[
	\mu_k =P_N \left [ \, X_{\tau}=k| (X_0,Y_0)=(N,N) \right ], \quad 0\leq k\leq N,
	\]
	where
	\[
	{\tau}=\inf\{t\geq 0:\, Y_t=0\}.
	\]
\end{lemma}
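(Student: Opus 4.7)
The plan is to identify $\mu$ with the law of the fully-loaded $\eta$-process at its (almost surely finite) stabilisation time via the Abelian property of ARW. Since $(\xi_t)$ is irreducible and aperiodic on the finite space $\Sigma$, its stationary distribution is unique; and by the symmetry of $\mathcal G_{N+1}$ under permutations of $\mathring V_{N+1}$ together with the uniform-in-vertex addition rule, both $\mu$ and the law of $\eta_{T^+}$ (starting from $(X_0,Y_0)=(N,N)$) are invariant under vertex permutations, so each is determined by its distribution of the particle count. It therefore suffices to show that the law $\nu$ of $\mathrm{stab}(\eta^\star)$, where $\eta^\star:=(1,\ldots,1)$ and $\mathrm{stab}$ denotes ARW stabilisation, is invariant for the $\xi$-dynamics; its particle count is then distributed as $X_{T^+}$.

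Let $\xi_0 \sim \nu$. One step of $\xi$ picks $V\in \mathring V_{N+1}$ uniformly, adds an A-particle at $V$ (activating the sleeping particle there, if any), and stabilises the result to produce $\xi_1$. A standard consequence of the Abelian property of ARW is the identity $\mathrm{stab}(\mathrm{stab}(c)+d) = \mathrm{stab}(c+d)$: partial stabilisation followed by adding particles and further stabilising yields the same final configuration as stabilising the summed configuration in one pass. Applied with $c=\eta^\star$ and $d$ an A-particle at $V$, this gives
\begin{equation*}
\xi_1 \overset{d}{=} \mathrm{stab}(\eta^\star + \delta_V^A),
\end{equation*}
so the invariance of $\nu$ reduces to the identity $\mathrm{stab}(\eta^\star + \delta_V^A) \overset{d}{=} \mathrm{stab}(\eta^\star)$.

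For this last step I would pass to the Diaconis--Fulton representation: each vertex $v$ carries an iid stack of instructions, each of which is \emph{sleep} with probability $\lambda/(1+\lambda)$ or \emph{jump to a uniform neighbour in $\mathcal G_{N+1}$} with probability $1/(1+\lambda)$, and stabilisation is order-independent. The configuration $\eta^\star + \delta_V^A$ has two A-particles at $V$ and one at every other interior vertex. I would fire only the currently doubled vertex until a jump occurs there: sleep instructions are wasted because sleep requires a solitary A-particle, but after a geometric number of wasted sleeps a jump instruction is drawn. That jump either absorbs the excess at the boundary (probability $1/N$) or moves the doubled vertex to one of the $N-1$ other interior vertices (each with probability $1/N$). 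Iterating, the location of the doubled vertex performs a simple random walk on $\mathring V_{N+1}$ with geometric absorption time, so in finite time it is absorbed and the configuration returns exactly to $\eta^\star$. Only a random initial segment of each instruction stack is consumed during this phase; by the iid structure, the remaining tails are independent of the extra-particle phase and have the original law, so the subsequent stabilisation of $\eta^\star$ produces a sample from $\nu$. Hence $\mathrm{stab}(\eta^\star + \delta_V^A) \overset{d}{=} \mathrm{stab}(\eta^\star)$, so $\nu$ is $\xi$-invariant and equals $\mu$ by uniqueness. The main technical obstacle is a careful formulation of the Abelian identity $\mathrm{stab}(\mathrm{stab}(c)+d) = \mathrm{stab}(c+d)$ for configurations allowing multiple particles per vertex and the activation-upon-addition convention of the $\xi$-chain; this is standard in the ARW literature (going back to Rolla--Sidoravicius) but must be written out explicitly.
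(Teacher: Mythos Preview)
Your proposal is correct and follows essentially the same route as the paper: both exploit the Abelian property to stabilise the configuration $\eta^\star+\delta_V^A$ in two orders, first moving only the extra particle (which, since every interior site is occupied, necessarily exits at the boundary) to recover $\mathrm{stab}(\eta^\star)$, and second stabilising $\eta^\star$ before adding the extra particle to obtain one step of the $\xi$-chain applied to $\nu$. Your write-up is more explicit than the paper's (you spell out the Diaconis--Fulton stack argument and the freshness of the remaining instructions), but the underlying idea is identical.
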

\begin{proof}
	Invariance of $\mu$ follows from the Abelian property { demonstrated, e.g.,\ in \cite{Rolla}}. Consider the configuration in which each vertex in $\mathring{V}_{N+1}$ contains an A-particle and add an extra particle at a uniformly chosen vertex.
	Stabilise in two different ways: first, we can start with stabilising only 
	the extra particle, which will eventually leave the system, as all 
	vertices are occupied by an active particle. Subsequently, we stabilise the $N$ active sites,
	which results in the distribution $\mu$. Second, disregard the extra particle,
	and stabilize the rest, and subsequently stabilise with the extra
	particle present. The resulting distribution corresponds to the distribution $\mu$ evolved by one step of $(\xi_t)_{t \in \mathbb{N}_0}$. Hence $\mu$ is invariant for the 
	one step evolution of $(\xi_t)_{t \in \mathbb{N}_0}$.
\end{proof}
%We apply Lemma~\ref{lem:statdis} to study $\mu$ via $(X,Y)$. 

\paragraph{Outline of the paper.}

The remainder of the paper is organized as follows.
In Section~\ref{sect:proofthm1} we introduce a key auxiliary process $S_t$, which measures the deviation of the system from the critical balance between active and sleeping particles, and reduce the proof of Theorem~\ref{thm:main} to three probabilistic estimates, stated as Propositions~\ref{prop:T+bnd}--\ref{prop:exit+}. Proposition~\ref{prop:T+bnd} is proved there, while Propositions~\ref{prop:exit-} and~\ref{prop:exit+} are established later in the paper. Assuming these estimates, Theorem~\ref{thm:main} is proved at the beginning of Section~\ref{sect:proofthm1}.

Propositions~\ref{prop:exit-} and~\ref{prop:exit+} provide bounds on large fluctuations of the process $S_t$ and are proved in Section~\ref{sec:dbounds} via a detailed analysis based on the drift and variance estimates established in Lemma~\ref{lem:moments-new-gen}.
Section~\ref{sec:constant} is devoted to the proof of Lemma~\ref{lem:moments-new-gen} and to refining the estimates on the increments of $S_t$ near the critical region.
Finally, Section~\ref{sec:scaling-window} contains the proof of Theorem~\ref{thm:shift}, where we analyse the dynamics near the critical density and establish the asymptotic lower-order shift.
This section constitutes the main technical part of the paper; see Section~\ref{ssec:outline} for an overview of the proof strategy for Theorem~\ref{thm:shift}.

\section{Proof of Theorem~\ref{thm:main} from deviation bounds}
\label{sect:proofthm1}
Unless stated otherwise, we assume that $P_N [ X_0=N,Y_0=N]=1$ from now on. Our next result shows, that stabilisation occurs after at most $O(N^2)$ steps with very high probability. For any $\rho\in(0,1)$ define the stopping time
\begin{equation}\label{eq:stoppingtime}
\tau_\rho= \inf\{t\geq 0:\, X_t\leq \rho N\}.
\end{equation}
Let further $(\eps_N)_{N\in \mathbb{N}}$ denote a fixed sequence satisfying {$\lim_{N\to\infty}\eps_N=0$ and}
\eqn{e:epsN-assump}
{ \sqrt\frac{{\log N}}{{N}} \leq \varepsilon_N  \leq  \frac{\rho_c}{4}, \quad N\in\mathbb{N}}
and let us fix  $\delta = 1 -  \frac{\rho_c}{2}$. 
\begin{proposition}[Upper bound on stabilisation time]\label{prop:T+bnd}
	There exist constants $c_1,C_1$, depending only on  $\lambda$, such that
	\[
	P_N \left [ {\tau}\wedge \tau_{\rho_c-\varepsilon_N}>\delta(1+\lambda)N^2 \right ]\leq C_1\textup{e}^{-c_1 N}.
	\]
\end{proposition}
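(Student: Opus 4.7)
The plan is to exploit that $X_t$ is non-increasing, dropping by exactly one precisely when an $\eta$-step consists of a walk that terminates at the boundary vertex. Setting $M := \lfloor \delta(1+\lambda)N^2 \rfloor$, the event $E := \{T^+ \wedge \tau_{\rho_c - \eps_N} > M\}$ entails $Y_t \geq 1$ and $X_t > (\rho_c - \eps_N)N$ throughout $0 \leq t \leq M$; in particular, the total number of $X$-drops during this interval is strictly less than $(1 - \rho_c + \eps_N)N$. So it suffices to lower bound the per-step drop probability strongly enough that such a drop deficit is exponentially unlikely.

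For the per-step bound, observe that at each step with $Y_t \geq 1$, with probability $\frac{1}{1+\lambda}$ a walk is launched, and its very first hop is to the boundary vertex with probability exactly $\frac{1}{N}$, since the walker chooses uniformly among its $N$ neighbours in $\mathcal G_{N+1}$. This gives
\[
\P\lr{X_{t+1} = X_t - 1 \mid \cF_t,\, Y_t \geq 1} \geq p_N := \frac{1}{(1+\lambda)N}.
\]
I would then enlarge the probability space with an i.i.d.\ Bernoulli$(p_N)$ family $(B_t)_{t \geq 1}$ coupled so that $B_t \leq \mathbf 1\{X_t < X_{t-1}\}$ whenever $Y_{t-1} \geq 1$. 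On $E$, this yields $N - X_M \geq \sum_{t=1}^M B_t$, and hence
\[
\P(E) \leq \PR\lr{\mathrm{Bin}(M, p_N) < (1-\rho_c+\eps_N)N}.
\]
Since $M p_N = \delta N$ and $\delta > 1-\rho_c$, a standard Chernoff bound produces the desired $C_1 e^{-c_1 N}$ decay whenever $\eps_N$ is small compared to the gap $\delta - (1-\rho_c)$.

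To make the bound uniform over the full admissible range $\eps_N \leq \min\{1-\rho_c, 1/100\}$, the crude first-hop estimate $p_b(X_t) \geq 1/N$ should be refined by counting walks that reach the boundary after several hops. On the complete graph the walker has a geometric number of continuations among occupied non-boundary vertices, giving the stronger bound $p_b(X_t) \geq c_0/(N(1-X_t/N))$ for $X_t$ bounded away from $N$ and a universal $c_0 > 0$. An elementary calculation shows $(1-\rho_c+\eps_N)^2 < 2(1-\rho_c) < 2\delta$ for every admissible $\eps_N$, so the refined bound produces an expected number of drops that comfortably exceeds the needed $(1-\rho_c+\eps_N)N$, and the same Chernoff argument applied to a sum of independent geometric waiting times yields the uniform exponential bound. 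The main obstacle is purely bookkeeping: handling the conditional dependence of successive drops cleanly, which the auxiliary independent markers dispose of.
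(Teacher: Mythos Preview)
Your argument is correct and essentially identical to the paper's: both lower-bound the per-step chance of an $X$-drop by $\tfrac{1}{(1+\lambda)N}$ via the first hop landing on the boundary, and then compare with i.i.d.\ Bernoulli trials (the paper phrases it dually, as the waiting time to accumulate $(1-\rho_c+\varepsilon_N)N$ successes). The paper does not carry out your third-paragraph refinement; it simply writes $1-\rho_c+\varepsilon_N < \tfrac{\delta+1-\rho_c}{2}$ and applies the large-deviation bound directly, which suffices since in the only application one takes $\varepsilon_N = \Theta(\sqrt{\log N/N}) \to 0$.
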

\begin{proof}
	Suppose that $X_t > (\rho_c - \varepsilon_N) N$ and $Y_t \ge 1$. The active particle that is
	selected for a potential move can leave the system on its first step with 
	probability $\frac{1}{1+\lambda} \frac{1}{N}$, hence we have
	\[
	P_N \left [ X_{t+1} = X_t - 1 \,|\, \mathcal{F}_t  \right ]
	\ge \frac{1}{1+\lambda} \frac{1}{N}, 
	\]
	as long as $t<{\tau}\wedge \tau_{\rho_c-\varepsilon_N}$. Therefore, the time it takes for either
	$Y_t$ to reach $0$, or $X_t$ to reach $(\rho_c - \varepsilon_N) N$, is 
	stochastically smaller than the time it takes to accumulate 
	$(1 - (\rho_c - \varepsilon_N)) N$ successes in a sequence of independent
	Bernoulli trials with probability of success 
	$\smash{p = \frac{1}{1+\lambda} \frac{1}{N}}$. Since
	\[
	1 - (\rho_c - \varepsilon_N) = 1 - \rho_c + \varepsilon_N %< \frac{\delta +1-\rho_c}{2}
	<\delta,
	\]
	{for all sufficiently large $N$}, it follows that the probability that more than
	$\delta ( 1 + \lambda ) N^2$ trials are needed decays exponentially in $N$, and the statement follows.
\end{proof}

We measure the deviation of $(X_t,Y_t)_{t\in\mathbb{N}_0}$ from the straight line $(x-y)/(N-y)=\rho_c$ through the quantity
\begin{equation}\label{eq:deviation}
S_t= Y_t-\ell(X_t), \quad t\geq 0,
\end{equation}
with $\ell(x)=\ell_N(x)=(1+\lambda)x-\lambda N$; {see Figure \ref{fig:lx}.}
\begin{figure}
    \centering
    \includegraphics[scale=0.60]{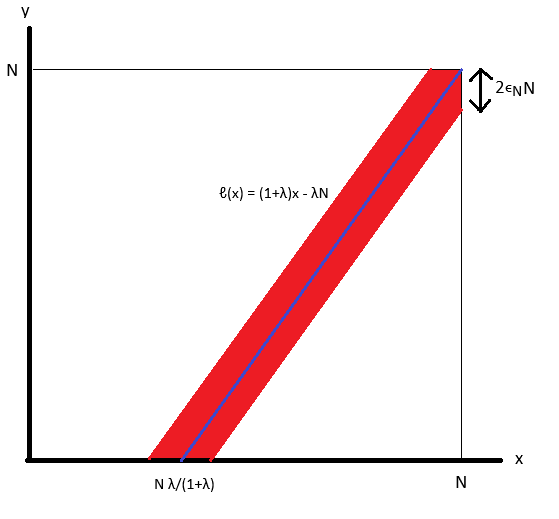}
    \caption{The straight line $y = \ell(x) = (1+\lambda) x - \lambda N$ (in blue) passes through the 
    points $(N,N)$ and $(N \lambda / (1+\lambda), 0) = (\rho_c N, 0)$. The quantity $S_t$ measures
    the vertical distance between $(X_t,Y_t)$ and this line. As we show in 
    Propositions \ref{prop:exit-} and \ref{prop:exit+}, starting in $(N,N)$, the process 
    $(X_t,Y_t)$ stays, with high probability, in the shaded (red) region, and hence hits 
    the horizontal axis near $(\rho_c N, 0)$.}
    \label{fig:lx}
\end{figure}
{As we will prove in Section~\ref{sec:constant}, given $(X_{t-1}, Y_{t-1})$, the expectation of $S_t$ is negative when $Y_{t-1} > \ell(X_{t-1})$ and positive when $Y_{t-1} < \ell(X_{t-1})$. Therefore, the line $\ell(x)$ acts as an equilibrium line for the drift of $S_t$, towards which the process is dynamically attracted.
In contrast, the evolution of the total particle count $X_t$ follows a simpler dynamic.  } {  The tail behaviour of the invariant measure $\mu$ is primarily governed by the fluctuations of the process $(S_t)_{t \geq 0}$ near the stabilisation time. In the next two results, we derive precise bounds on the deviations of $(S_t)_{t \in \mathbb{N}}$, which in turn yield tail estimates for $\mu$.
}

%To obtain tail bounds for $\mu$, we control deviations of $(S_t)_{t\in\mathbb{N}}$ using the next two results.
\begin{proposition}[Lower deviations of $(S_t)_{t\geq 0}$]\label{prop:exit-}  
There exist $c_2 = c_2(\lambda) > 0$ and $C_2=C_2(\lambda)<\infty$ such that
	\[
	P_N \left [ S_t \geq  -2 \varepsilon_N N \text{ for all } 
    0 \le t \le {\tau \wedge} \delta (1 + \lambda) N^2 \right ]
	\ge 1- C_2 \, N^4 e^{ - c_2 \varepsilon_N^2 N }.
	\]
\end{proposition}

{
\begin{proposition}[Upper deviations of $(S_t)_{t\geq 0}$%, {\cite[Section 6]{járai2023lawV1}}
	]\label{prop:exit+}  
	 There exist $c_3 = c_3(\lambda) > 0$ and $C_3=C_3(\lambda)<\infty$ such that
	\[
%	P_N [ S_t \leq  2 \varepsilon_N N \text{ for all } \tau_{\rho_c-\varepsilon_N/(1+\lambda)} \le t \le \delta (1 + \lambda) N^2 ]
    P_N \left [ S_t \leq  2 \varepsilon_N N \text{ for all } 0 \le t \le \tau \wedge \delta (1 + \lambda) N^2 \right ]
	\ge 1- C_3 \, N^4 e^{ - c_2 \varepsilon_N^2 N }.
	\]
\end{proposition}
}

{
We discuss the derivation of Proposition~\ref{prop:exit-} in Section~\ref{sec:dbounds}. The proof of Proposition~\ref{prop:exit+} is very similar and superseded by the results of Section~\ref{sec:scaling-window}. We state Proposition~\ref{prop:exit+} here to allow for a  proof of Theorem~\ref{thm:main}.}

\begin{proof}[Proof of Theorem~\ref{thm:main}]

Recall that $\delta=1-\rho_c/2$. Define
\[
\bar\varepsilon_N:= c \sqrt{\frac{\log N}{N}},
\qquad
\varepsilon_N:=\frac{1+\lambda}{2}\bar\varepsilon_N,
\]
where $c = c(\lambda)$ is a large enough constant,
so that both sequences satisfy \eqref{e:epsN-assump} for all sufficiently large $N$.
Define
\[
\alpha_N:=\bar\varepsilon_N N,
\]
so that $\alpha_N=O(\sqrt{N\log N})$.
Set
\[
T:=\delta(1+\lambda)N^2.
\]

\noindent 

Recall the definition of $S_t$ in  (\ref{eq:deviation}).
For the upper tail bound,  by using Lemma~\ref{lem:statdis} in the first step and the definition of $S_t$  afterwards we have 
\begin{align*}
\mu^{(N)}\big ( \rho_cN+\alpha_N,\infty \big ) & = P_N \left [ X_\tau>\rho_cN+\alpha_N \right ] \\
&\le
P_N  \left [S_\tau<-2\varepsilon_NN,\ \tau\le T \right ]
+
P_N   \left [ \tau>T,\ X_\tau>\rho_cN+\alpha_N \right ] \\
&\le
P_N\Bigl [ \exists\, t\le \tau\wedge T:\ S_t<-2\varepsilon_N N \Bigr ]
+
P_N  \left [ \tau>T,\ X_\tau>\rho_cN+\alpha_N \right ].
\end{align*}
The first term is $o(1)$ by Proposition~\ref{prop:exit-}.
For the second term,  monotonicity of $(X_t)_{t\ge 0}$ gives
\begin{align*}
P_N \big [   \tau > T, X_\tau > \rho_c N + \alpha_N  \big ]  & = 
P_N \big [ X_\tau > (\rho_c  + \bar \varepsilon_N) N, \tau > T    \big ] \\
& \leq P_N \big [  X_\tau > (\rho_c  - \bar \varepsilon_N) N, \tau > T    \big ] \\
& \leq  P_N \big [ \tau_{\rho_c -\bar \varepsilon_N} > T,  \tau > T    \big ] = o(1),
\end{align*} 
where in the last step we applied Proposition~\ref{prop:T+bnd}.

For the lower tail bound, again using Lemma~\ref{lem:statdis} in the first step we have 
\begin{align*}
&\mu^{(N)} \big [0,\rho_cN-\alpha_N \big ) 
= P_N [X_\tau<\rho_cN-\alpha_N ]  \\
&\quad \le 
P_N [ X_\tau<\rho_cN-\alpha_N,\ \tau\le T ]
+ P_N [ \tau_{\rho_c-\bar\varepsilon_N}\le T<\tau ] 
+ P_N [\tau\wedge\tau_{\rho_c-\bar\varepsilon_N}>T].
\end{align*}
The third term is $o(1)$ by Proposition~\ref{prop:T+bnd}.

For the first term, if $X_\tau<\rho_cN-\alpha_N$ and $\tau\le T$, then $S_\tau>2\varepsilon_NN$, hence
\[
P_N 
\left [ X_\tau<\rho_cN-\alpha_N,\ \tau\le T \right ]
\leq 
P_M \left [  \exists\, t\le \tau\wedge T:\ S_t>2\varepsilon_NN \right ] = o(1),
\]
where the last step follows from Proposition~\ref{prop:exit+}.
For the second term, on $\{\tau_{\rho_c-\bar\varepsilon_N}\le T<\tau\}$ we have
$Y_{\tau_{\rho_c-\bar\varepsilon_N}}>0$ and
$X_{\tau_{\rho_c-\bar\varepsilon_N}}\le (\rho_c-\bar\varepsilon_N)N$, so
by definition of $S_t$ we have 
$
S_{\tau_{\rho_c-\bar\varepsilon_N}}
>
2\varepsilon_NN.
$
Hence,
\[
P_N \big [ \tau_{\rho_c-\bar\varepsilon_N}\le T<\tau \big ]
\leq P_N \big [ \exists\, t\le \tau\wedge T:\ S_t>2\varepsilon_NN \big ]  = o(1),
\]
where the last step follows from  Proposition~\ref{prop:exit+}.
Therefore
\[
\mu^{(N)}\big  [0,\rho_cN-\alpha_N \big )=o(1).
\]
The desired bound then follows from the combination of the  the upper and lower tail bounds.

\end{proof}

\section{Transition probabilities for the particle count chain}\label{sec:tP}
Our main technical tool for the remainder of the paper are the following formulae for the transition probabilities of $(X,Y)$. For any (random or deterministic) sequence $(a_n, n\in\mathbb{Z})$ we use the notation
\[
\Delta a_n = a_{n+1}-a_n,\quad n\in\mathbb{Z},
\]
for the forward difference operator. { In the case where the $a_n$ themselves are functions of further variables, $\Delta$-differentiation is always understood to apply to the right sub-index variable $n$.}
We define for each integer $k \geq 0$ and $j \in \{0, -1\}$
$$
	\Pi_k^{(j)}(x,y)  := P_N \Big [ \Delta Y_{t} \ge k + j, \, \Delta X_t=j \, \big | \, X_t=x,Y_t=y \Big  ].
$$
\begin{lemma}\label{lemma:exactcomputation}
	For any $0\leq y\leq x \leq N$, we have for  $1\leq k\leq x-y$ 
	\begin{align*}
		\Pi_k^{(0)}(x,y) & = 
		\frac{1}{1+\lambda}   \,\frac{N+1}{N}  \, 
		\prod_{i=0}^{k-1}   \Big (
		\frac{x - y - i}{N + 2  - (y + i)} \Big )  \, 
		\frac{N +1 - x}{N+2 - x},  
	\end{align*} 
and for  $0 \leq k\leq x-y$ 
	\begin{align*}		
		\Pi_k^{(-1)}(x,y) = 
		\frac{1}{1+\lambda}   \,\frac{N+1}{N}  \, 
		\prod_{i=0}^{(k-1)_+}   \Big (
		{\frac{x - y - i}{N + 2  - (y + i)}} \Big )  \, 
		{\frac{1}{N+2 - x}}.
	\end{align*}
	Moreover, we have 
	$$
		\Pi_0^{(-1)}(x,y) = 
		\frac{1}{1+\lambda}   \frac{N+1}{N} \,  \,   \frac{1}{N   + 2 - x}.  
	$$
\end{lemma}
\begin{proof}
	We call a vertex hosting one (or more during updates) active particle an `A-vertex', a vertex hosting an S-particle an `S-vertex', and a vertex hosting at least one particle a `filled vertex'. The `sleeping set' is the set of S-vertices in $\mathring{V}_{N+1}$ (at a given time $t$).
	
	\textit{Derivation of the formula for $\Pi_k^{(0)}(x,y)$.}
	An active particle is drawn uniformly at random. We move the particle until it either reaches one empty vertex or it reaches the boundary $\partial V_{N+1}$.	The event defining $\Pi_k^{0}(x,y)$ occurs if and only if the A-particle visits $k$ distinct sites of the sleeping set at least once and, after that, reaches an empty site rather than reaching the boundary. We now compute the probability of this event.
	\begin{enumerate}[label=(\arabic*)]
	\setlength{\itemsep}{0pt}
\setlength{\topsep}{0pt}
		\item \textit{Probability that the moving particle hits one vertex of the sleeping set.} First, the probability that the A-particle reaches one vertex {with a sleeping particle} before reaching the boundary or an empty vertex is 
		\begin{equation}\label{eq:case1.3}
			\frac{1}{1+\lambda} \, \,  \, \, \frac{X_t - Y_t}{N + 2 - Y_t} \frac{N+1}{N}.
		\end{equation}
		This expression is the sum of two terms: The first term is the probability that the moving particle jumps immediately to the sleeping set,
		\begin{equation}\label{eq:case1.1}
			\frac{1}{1+\lambda} \, \, \frac{X_t - Y_t}{N},
		\end{equation}
		the second term is the probability that it first jumps to one or more A-vertices and, after that, it jumps to the sleeping set rather than reaching the boundary or an empty vertex
		\begin{equation}\label{eq:case1.2}
			\frac{1}{1+\lambda} \, \, \frac{Y_t-1}{N} \, \, \frac{X_t - Y_t}{N  + 1 - (Y_t - 1)} = 
			\frac{1}{1+\lambda} \, \, \frac{Y_t-1}{N} \, \, \frac{X_t - Y_t}{N  + 2 - Y_t }.
		\end{equation}
		Here, the product of the first two factors is the probability that at the first step the particle jumps to a vertex hosting an A-particle and the third factor is the probability that, after having jumped over an arbitrary number of vertices hosting an A-particle, it jumps to the sleeping set rather than reaching the boundary or an empty site.
		The sum of (\ref{eq:case1.1}) and (\ref{eq:case1.2}) gives (\ref{eq:case1.3}).
		\item \textit{Probability that moving A-particle hits a second vertex of the sleeping set.}	At this point the A-particle is on a vertex which was hosting an S-particle in $\eta_{t}$ and which now hosts two active particles, thus  $Y_t$ vertices host at least one active particle (one of them hosts two active particles) and $X_t - Y_t - 1$ vertices host one sleeping particle. The probability that the A-particle will leave the set of vertices hosting one active particle by jumping to a vertex hosting one sleeping particle rather than reaching the boundary or an empty site is
		\begin{equation}\label{eq:case1.4}
			\frac{X_t - Y_t - 1}{N + 1 - Y_t},
		\end{equation}
		where the numerator is the current number of vertices hosting a sleeping particle and the denominator is the number of vertices which host one sleeping particle, are empty or the boundary vertex.
		\item\textit{Probability that the A-particle hits a third vertex of the sleeping set.} Now, the active particle
		is on a vertex which was hosting an S-particle in $\eta_{t}$ and which now hosts two active particles, thus  $Y_t+1$ vertices host at least one active particle (one of them hosts two active particles) and $X_t - Y_t -2$ vertices host one sleeping particle. Similarly to (\ref{eq:case1.4}), the 
		probability that the A-particle will leave the set of vertices hosting one active particle by jumping to a vertex hosting one sleeping particle rather than reaching the boundary or an empty vertex is,
		\begin{equation}\label{eq:case1.5}
			\frac{X_t - Y_t - 2}{N + 1  - (Y_t + 1)}.
		\end{equation}
		\item\textit{Iteration.} Iteratively, we obtain that
		\begin{multline}\label{eq:case1.6}
			P_N \Big [ Y_{t+1} > Y_{t} + k, \, \,  X_{t+1}= X_t \, \big | \,  \mathcal{F}_t \Big ] \\ =  \,   
			\frac{1}{1+\lambda} \, \, \frac{N+1}{N}  \, \, \frac{X_t - Y_t}{N+2 - Y_t} \, \,
			\prod_{i=1}^{k}   \Big (
			\frac{X_t - Y_t - i}{N + 2  - (Y_t + i)} \Big ) \, \, 
			\frac{N +1 - X_t}{N+2 - X_t},
		\end{multline}
		where the first three factors are given by  (\ref{eq:case1.3}), the product of $k$ terms corresponding to the third factor is obtained iterating (\ref{eq:case1.4}) and (\ref{eq:case1.5}) $k$ times, and the last factor is the probability that the last jump is to an empty vertex rather than the boundary. This concludes the calculation of $\Pi_k^{(0)}(x,y)$.
	\end{enumerate}
	
	\textit{Derivation of the formula for $\Pi_k^{(-1)}(x,y)$.}
	The event defining  {$\Pi_k^{(-1)}(x,y)$}  occurs if and only if the same sequence of events as in the previous consideration occurs, with the difference that the last jump is to $\partial V_{N+1}$ rather than to an empty vertex. We immediately obtain the stated formula for $\Pi_k^{(-1)}(x,y)$. \\
	
{\textit{Derivation of the formula for $\Pi_0^{(-1)}(x,y)$.}
	Observe that in this case $\Pi_0^{(-1)}(x,y) =   P_N  [ \Delta Y_{t} \ge -1, \, \Delta X_t= -1 \,  | \, X_t=x,Y_t=y  ] =  P_N  [  \Delta X_t= -1 \, \big | \, X_t=x,Y_t=y  ] $.
	The event  $\Delta X_t= -1 $ occurs if and only if one of the following two situations happens: 
	{(a)}: The active particle jumps immediately to the boundary vertex, 
	which occurs with probability $\frac{1}{1 + \lambda} \frac{1}{N}$. 
		{(b)}:  The active particle jumps immediately to one of the vertices hosting at least one particle,  and, after jumping an arbitrary number of times on this set, it leaves such a set by jumping directly to the boundary vertex rather than to an empty site. This event occurs with probability 
		$$
		\frac{1}{1 + \lambda} \frac{X_t - 1}{N} \, \, \frac{  1  }{ N - X_t + 2  }
		$$
		The sum of the two probabilities leads to the result.}
\end{proof}

%\section{Drift bounds: upper deviations of the total particle count}
\section{Proof of Propositions \ref{prop:exit-} and \ref{prop:exit+}}
\label{sec:dbounds}

{
We now examine the evolution of $(S_t)_{t\geq 0}$ with the goal to prove 
Propositions~\ref{prop:exit-} and \ref{prop:exit+}. This will be achieved with
the help of Lemma \ref{lem:moments-new-gen}, that we state in Section \ref{ssec:moments},
and prove in Section \ref{sec:constant}.
}

{We use the notation
\[
\pi^{(j)}_k(x,y) = -\Delta {\Pi_{k}^{(j)}}(x,y),\quad {0}\leq k\leq x-y, j\in\{-1,0\},
\]
for the increment distribution of $(X,Y)$ in the proofs of this section. Note that, whenever $x \geq y$, 
and ${0}\leq k\leq x-y$,  we have
$$
\pi^{(j)}_{k}(x,y) = P_N \Big [  \Delta Y_{t} =  k + j, \, \Delta X_t=j \, \big | \, X_t=x,Y_t=y \Big ].
$$
Moreover we define for  $x \geq  y \geq 1$
\[
\pi^{(0)}_{-1}(x,y) =   P_N \Big [ \Delta Y_{t} =  -1, \, \Delta X_t=0 \, \big | \, X_t=x,Y_t=y \Big ] =  \frac{\lambda}{1+\lambda}.
\]}
{
Our goal is to show that, if $S_t$ is {far below, respectively, far above $0$}, then it has a considerable drift upwards, respectively, downwards in the next step, resulting in reduction, respectively, increase, of the number of S-particles with high probability. 
{Define the events
$$L^{(N)-}=\{-2 \eps_N N \le S_t \le -\eps_N N\},$$
and
$$L^{(N)+}=\{\eps_N N \le S_t \le 2 \eps_N N\}.$$
}
}

\subsection{Conditional mean and variance of the increments of $S_t$}
\label{ssec:moments}

{
\begin{lemma}
\label{lem:moments-new-gen}
(i) There exists $N_0$ such that on the event $L^{(N)-}$, we have 
\eqn{e:drift-rough-gen}
{ \E [ \Delta S_t \,|\, \cF_t ]
  \ge -\frac{S_t}{2(1+\lambda)N}  \quad \text{ for $N \ge N_0$.} }
(ii) There exists $N_0$ such that on the event $L^{(N)+}$, we have 
\eqn{e:drift-rough-gen2}
{ \E [ \Delta S_t \,|\, \cF_t ]
  \le -\frac{S_t}{2(1+\lambda)N}  \quad \text{ for $N \ge N_0$.} }
(iii) There exists $C = C(\lambda)$ such that we have
\eqn{e:var-rough-gen}
{ \E [ (\Delta S_t)^2 \,|\, \cF_t ]
  \le C, \quad \text{ for all $N \in \mathbb{N}$ and all $t \ge 0$.} }
(iv) On the event $L^{(N)+}$, we have 
\eqn{e:2nd-rough-gen}
{ \E [ (\Delta S_t)^2 e^{\eps_N \Delta S_t} \,|\, X_t, Y_t ]
  = O(1). }
\end{lemma} 

We prove Lemma \ref{lem:moments-new-gen} in Section \ref{sec:constant}. We now proceed to
prove the two propositions, assuming this lemma.
}

\subsection{Bound on negative deviations of $S_t$}
\label{ssec:St-}

{
\begin{lemma}
\label{lem:exp-moment-bnd-}
There exist constants $h>0, N_0\in \mathbb{N}$ depending only on $\lambda$ and $(\varepsilon_N)_{N\in\mathbb N}$ such that for every $N>N_0$ on the event $L^{(N)-}$ we have
\eqnst
{ E_N \big[ \exp \big( - h \eps_N \Delta S_{t} \big) \,\big|\, \mathcal{F}_t \big]\le 1 , 
      \quad  t\in\mathbb{N}_0. }
\end{lemma}

\begin{proof}
We use the inequality
\[ \exp \big( - h \eps_N (S_{t+1} - S_t) \big)
   \le 1 - h \eps_N (S_{t+1} - S_t) + h^2 \eps_N^2 (S_{t+1} - S_t)^2, \]
which holds when $h \eps_N$ is sufficiently small, because $S_{t+1} - S_t \ge -1$ always.
Taking the conditional expectation given $\cF_t$, and using Lemma \ref{lem:moments-new-gen}(i),(iii),  
we get that the right hand side is at most
\[ 1 + h \eps_N \frac{S_t}{2 (1+\lambda) N} + O ( h^2 \eps_N^2 ). \]
On the event $L^{(N)-}$, we bound $S_t \le - \eps_N N$, resulting in the upper bound
\[ 1 - \frac{1}{2(1+\lambda)} h \eps_N^2 + O ( h^2 \eps_N^2 ). \]
This is indeed less than $1$, if $h > 0$ is set sufficiently small, and $N$ is large enough.
\end{proof}

We are now ready to prove Proposition~\ref{prop:exit-}.
}

\begin{proof}[Proof of Proposition~\ref{prop:exit-}]
For $1 \le s < t$, we introduce the following event:
\eqnst
{ A_{s,t}
= \left\{ \parbox{9cm}{$-\eps_N N - 1 \le S_s \le -\eps_N N$, and 
for all $s < r < t$ we have $-2 \eps_N N \le S_r \le -\eps_N N$
and $S_t < -2 \eps_N N$} \right\}. }
Since the downward steps of $Y_t$ are at most of size $1$, we have 
that on the event in the statement of the proposition, there 
must exist $1 \le s < t \le \delta (1+\lambda) N^2$ such that 
$A_{s,t}$ occurs. This implies that 
\eqnst
{ P_N \left[ \text{$S_t < -2 \eps_N N$ for some 
$0 \le t \le \delta(1+\lambda) N^2$} \right]
\le \sum_{1 \le s < t \le \delta(1+\lambda) N^2}
\P \left(A_{s,t} \right). }

The number of terms is $O(N^4)$, hence it is enough to upper bound 
each summand by $\exp( - c \eps_N^2 N)$. Let us fix $h > 0$ satisfying the
conclusion of Lemma \ref{lem:exp-moment-bnd-}. Write $h_N = h \eps_N$.
Using Markov's inequality in the
second step, we have
\eqnsplst
{ \P [ A_{s,t} ] 
&\le \P [ S_t - S_s \le - \eps_N N + 1,\, 
- 2 \eps_N N \le S_r \le -\eps_N N \text{ for all $s \le r < t$} ] \\
&= \P \big[ e^{-h_N (S_t - S_s)} \ge e^{-h_N} \, e^{h_N \eps_N N},\, 
- 2 \eps_N N \le S_r \le -\eps_N N \text{ for all $s \le r < t$} \big] \\
&\le C \, \exp (- h_N \eps_N N ) \, \E \big[ e^{-h_N (S_t - S_s)} \, 
\bfone\{- 2 \eps_N N \le S_r \le -\eps_N N \text{ for all $s \le r < t$}\} \big] \\
%&= C \, \exp (- h_N \eps_N N ) \, \E \Big[ \prod_{r=s}^{t-1} 
%\big( e^{-h_N (S_{r+1} - S_r)} \, \bfone\{- 2 \eps_N N \le S_r \le -\eps_N N\} %\big) \Big] \\
&= C \, \exp (- h_N \eps_N N ) \, \E \Big[ \prod_{r=s}^{t-1} \E \big[ e^{-h_N (S_{r+1} - S_r)} \,
\bfone\{- 2 \eps_N N \le S_r \le -\eps_N N\} \,\big|\, \mathcal{F}_r \big] \Big] \\
&\le C \, \exp (- h_N \eps_N N ) 
\le C \exp ( - h \eps_N^2 N ). } 
This completes the proof.
\end{proof}

\subsection{Bound on positive deviations of $S_t$}
\label{ssec:St+}

{
\begin{lemma}
\label{lem:exp-moment-bnd+}
{There exist constants $h>0, N_0\in \mathbb{N}$ depending only on $\lambda$ and $(\varepsilon_N)_{N\in\mathbb N}$ such that for every $N>N_0$
on the event $L^{(N)+}$ we have}
\eqnst
{ E_N \big[ \exp \big( h \eps_N \Delta S_{t} \big) \,\big|\, \mathcal{F}_t \big]\le 1 , \quad  t\in\mathbb{N}_0.
	}
%for all sufficiently large $N$.
%	\exp \left( - \delta \eps_N^2 \right). }
\end{lemma}

\begin{proof}
Using Taylor's theorem with remainder to expand the exponential, there exists $\theta \in [0,1]$ such that
\eqnst
{ E_N \big[ \exp \big( h \eps_N \Delta S_{t} \big) \,\big|\, \mathcal{F}_t \big]
  = 1 + h \eps_N E_N \big[ \Delta S_t \,\big|\, \mathcal{F}_t \big]
    + h^2 \eps_N^2 E_N \big[ (\Delta S_t)^2 \exp \big( \theta h \eps_N \Delta S_t \big) 
      \,\big|\, \mathcal{F}_t \big]. }
Using Lemma \ref{lem:moments-new-gen}(ii),(iv)  
we get that the right hand side is at most
\[ 1 - h \eps_N \frac{S_t}{2 (1+\lambda) N} + O ( h^2 \eps_N^2 ). \]
On the event $L^{(N)+}$, we bound $S_t \ge \eps_N N$, resulting in the upper bound
\[ 1 - \frac{1}{2 (1+\lambda)} h \eps_N^2 + O ( h^2 \eps_N^2 ). \]
This is indeed less than $1$, if $h > 0$ is set sufficiently small, and $N$ is large enough.
\end{proof}

We are almost ready to prove Proposition \ref{prop:exit+}, with an argument similar to the one for
Proposition \ref{prop:exit-}. The only extra technicality is that the upwards jumps of $S_t$ do
not remain under a fixed bound. This issue will be dealt with by the following simple bound.
Define the event 
\[ D
   = \left\{ \parbox{12cm}{$\exists$ $0 \le t \le \delta (1+\lambda) N^2$ such that
      $X_t \le N (1 - \frac{\eps_N}{2 (1+\lambda)})$ and 
      $S_{t+1} - S_t > \frac{1}{2} \eps_N N$} \right\}. \]
      
\begin{lemma}
\label{lem:jump-St}
There exist $C = C(\lambda)$ and $c = c(\lambda)$ such that
$\P [ D ] \le C N^2 \exp ( - c \eps_N^2 N )$
\end{lemma}

\begin{proof}
For all $t \in \mathbb{N}_0$, on the event $X_t \le N (1 - \frac{\eps_N}{2 (1+\lambda)})$, we have
\[ \frac{X_t - Y_t - i}{N+2 - Y_t - i}
   \le \frac{X_t}{N}
   \le \left(1 - \frac{\eps_N}{2 (1+\lambda)}\right)
   \le \exp \bigg( - \frac{\eps_N}{2 (1+\lambda)} \bigg). \]
Substituting this into the formulas from Lemma \ref{lemma:exactcomputation}, we get
\[ \P \bigg[ Y_{t+1} - Y_t > \frac{1}{2(1+\lambda)} \eps_N N \,\bigg|\, \cF_t \bigg] 
   \le \exp \bigg( - \frac{\eps^2_N N}{4 (1+\lambda)^2} \bigg). \]
Since $\{ S_{t+1} - S_t > \frac{1}{2} \eps_N N \} \subset \{ Y_{t+1} - Y_t > \frac{1}{2(1+\lambda)} \eps_N N \}$
by the definition of $S_t$, we get
\[ \P \bigg[ \text{$X_t \le N \left(1 - \frac{\eps_N}{2 (1+\lambda)}\right)$ and 
      $S_{t+1} - S_t > \frac{1}{2} \eps_N N$} \Bigg]
      \le \exp \big( - c \eps^2_N N  \big). \]
A union bound over $t$ completes the proof.
\end{proof}

\begin{proof}[Proof of Proposition~\ref{prop:exit+}.]
For $0 \le s < t$, we introduce the following event:
\eqnst
{ B_{s,t}
= \left\{ \parbox{9cm}{$\eps_N N \le S_{s+1} \le \frac{3}{2} \eps_N N$, 
  for all $s+1 < r < t$ we have $\eps_N N \le S_r \le 2 \eps_N N$ and $S_t > 2 \eps_N N$} \right\}. }
%Since the downward steps of $Y_t$ are at most of size $1$, we have that 
Observe that as long as $X_s > N(1 - \frac{\eps_N}{2(1+\lambda)})$ it is impossible to have 
$S_{s+1} > \frac{3}{2} \eps_N N$. 
Therefore, on the event in the statement of the proposition, and when the event $D^c$ also occurs, there 
must exist $0 \le s < t \le \delta (1+\lambda) N^2$ such that the event
\[ D^c \cap \left\{ \parbox{9cm}{$S_s < \eps_N N$, $\eps_N N \le S_{s+1} \le \frac{3}{2} \eps_N N$, 
  for all $s+1 < r < t$ we have $\eps_N N \le S_r \le 2 \eps_N N$ and $S_t > 2 \eps_N N$} \right\} 
  \subset B_{s,t} \]
occurs. This implies that 
\eqnst
{ P_N \left[ \text{$S_t > 2 \eps_N N$ for some 
$0 \le t \le \tau \delta (1+\lambda) N^2$} \right]
\le \P [D] + \sum_{0 \le s < t \le \delta(1+\lambda) N^2}
\P \left(B_{s,t} \right). }
The term $\P [D]$ satisfies the required bound due to Lemma \ref{lem:jump-St}.
The number of terms in the sum is $O(N^4)$, hence it is enough to upper bound 
each summand by $\exp( - c \eps_N^2 N)$. Let us fix $h > 0$ satisfying the
conclusion of Lemma \ref{lem:exp-moment-bnd+}. Write $h_N = h \eps_N$.
Using Markov's inequality in the
second step, we have
\eqnsplst
{ &\P [ B_{s,t} ] 
\le \P [ S_t - S_{s+1} \ge \frac{1}{2} \eps_N N,\, 
 \eps_N N \le S_r \le 2 \eps_N N \text{ for all $s+1 \le r < t$} ] \\
&\quad = \P \big[ e^{h_N (S_t - S_{s+1})} \ge e^{h_N \eps_N N / 2},\, 
 \eps_N N \le S_r \le 2 \eps_N N \text{ for all $s+1 \le r < t$} \big] \\
&\quad \le C \, \exp (- h_N \eps_N N / 2) \, \E \big[ e^{h_N (S_t - S_{s+1})} \, 
\bfone\{\eps_N N \le S_r \le 2 \eps_N N \text{ for all $s+1 \le r < t$}\} \big] \\
%&= C \, \exp (- h_N \eps_N N ) \, \E \Big[ \prod_{r=s}^{t-1} 
%\big( e^{-h_N (S_{r+1} - S_r)} \, \bfone\{- 2 \eps_N N \le S_r \le -\eps_N N\} %\big) \Big] \\
&\quad = C \, \exp (- h_N \eps_N N / 2) \, \E \Big[ \prod_{r=s+1}^{t-1} \E \big[ e^{h_N (S_{r+1} - S_r)} \,
\bfone\{\eps_N N \le S_r \le 2 \eps_N N\} \,\big|\, \mathcal{F}_r \big] \Big] \\
&\quad \le C \, \exp (- h_N \eps_N N / 2) 
\le C \exp ( - (h/2) \eps_N^2 N ). } 
This completes the proof.
\end{proof}
}

\section{Proof of conditional moment bounds}
\label{sec:constant}
{
\noindent 
In this section we prove Lemma \ref{lem:moments-new-gen} stated in Section \ref{ssec:moments}. 
We also state and prove here more refined estimates on the increments of $S_t$ when 
$X_t$ is 'near the critical region', that is, when
$X_t = \frac{\lambda}{1+\lambda} N + b (1+o(1)) \sqrt{N \log N}$ for some $b > 0$.
On the one hand, these estimates are needed in Section \ref{sec:scaling-window} in the 
proof of our main result, Theorem \ref{thm:shift}.
%and  generating function of the increments of $S_t$
%in the critical region.  
%These results are presented in Lemma \ref{lem:drift-rough}, Lemma \ref{lem:var-rough} and Lemma \ref{lem:moment-gen}.
%These computations are not only required for the proof of our main results, but they also 
On the other hand, they provide a heuristic understanding of the numerical value of the 
constant $a$ in Theorem \ref{thm:shift} through a connection with the Ornstein–Uhlenbeck process.
%The heuristic (of which the proof of Theorem \ref{thm:shift} is formally independent), is presented 
%in Section \ref{ssec:heuristic} of the Appendix. Are we still making the O-U connection?

{Recall from (\ref{eq:deviation})  the definition of the deviation $S_t$ and note that   
\eqn{e:St-form}
{  S_t 
= Y_t -(1+\lambda) X_t + \lambda N
= (Y_t - X_t) + \lambda (N - X_t), \quad t\in \mathbb{N}.}
Also recall that $\Delta S_t = S_{t+1} - S_t$.}

%We have the following estimates for the conditional expectation and variance of $(S_t)_{t\geq 0}$ in the critical region (recall that $\Delta S_t=S_{t+1}-S_t$):

The estimates will rely on the following result, whose (elementary) proof is 
deferred to Section \ref{ssec:proof-sums} in the Appendix.
}

\begin{lemma}
\label{lem:sums}
For any $n \in\mathbb{N}$ and $m > n$ we have
\eqn{e:sum-1st}
{ \sum_{\ell = 1}^n \frac{n (n-1) \dots (n-\ell+1)}{m (m-1) \dots (m-\ell+1)}
= \frac{m+1}{m-n+1} - 1 
= \frac{n}{m-n+1}, }
\eqn{e:sum-2nd}
{ \sum_{\ell = 1}^n \ell \frac{n (n-1) \dots (n-\ell+1)}{m (m-1) \dots (m-\ell+1)}
= \frac{n (m+1)}{(m-n+1) (m-n+2)}.}
\end{lemma}

{
We first compute a general formula for the conditional expectation of the increment of $S_t$.

\begin{lemma}
\label{lem:drift-formula}
For all $t \ge 0$ we have
\eqnspl{e:drift-formula}
{ \E [ S_{t+1} - S_t \,|\, \cF_t ]
  &= \frac{1}{1+\lambda} \frac{N+1}{N} \bigg[ \frac{-S_t}{N+3-X_t} + \frac{-2\lambda}{N+2-X_t} \\
  &\qquad\qquad + \frac{3\lambda}{(N+2-X_t)(N+3-X_t)} + O(N^{-1}) \bigg]. }
\end{lemma}

\begin{proof}
We compute
\eqnspl{e:drift-gen-form}
{ &\E [ S_{t+1} - S_t \,|\, \cF_t ]
= -\pi^{(0)}_{-1} + \sum_{k=0}^{X_t-Y_t} (\lambda+k) \pi^{(-1)}_k + \sum_{k=0}^{X_t-Y_t} k \pi^{(0)}_k \\
%&\quad = -\frac{\lambda}{1+\lambda} + \lambda \Pi^{(-1)}_0 
%+ \sum_{k=1}^{X_t-Y_t} k (\pi^{(-1)}_k + \pi^{(0)}_k) \\
&\quad = -\frac{\lambda}{1+\lambda} + \lambda \Pi^{(-1)}_0 
+ \sum_{k=1}^{X_t-Y_t} \sum_{\ell=1}^k (\pi^{(-1)}_k + \pi^{(0)}_k) \\
&\quad = -\frac{\lambda}{1+\lambda} + \lambda \Pi^{(-1)}_0 
+ \sum_{\ell=1}^{X_t-Y_t} \sum_{k=\ell}^{X_t-Y_t} (\pi^{(-1)}_k + \pi^{(0)}_k) \\
&\quad = -\frac{\lambda}{1+\lambda} + \lambda \Pi^{(-1)}_0 
+ \sum_{\ell=1}^{X_t-Y_t} (\Pi^{(-1)}_\ell + \Pi^{(0)}_\ell). }
For the sum over $\ell$ we use Lemma \ref{lemma:exactcomputation} and the identity \eqref{e:sum-1st}, 
which give
\eqnspl{e:ell-sum}
{ \sum_{\ell=1}^{X_t-Y_t} (\Pi^{(-1)}_\ell + \Pi^{(0)}_\ell)
&= \frac{1}{1+\lambda} \frac{N+1}{N} \sum_{\ell=1}^{X_t-Y_t} \prod_{i=0}^{\ell-1}
\frac{X_t-Y_t-i}{N+2-Y_t-i} \\
&= \frac{1}{1+\lambda} \frac{N+1}{N} \frac{X_t-Y_t}{N+3-X_t}. }  
We also have 
\eqnst
{ \Pi^{(-1)}_0
= \frac{1}{1+\lambda} \frac{N+1}{N} \frac{1}{N+2-X_t}. }
Substituting these into \eqref{e:drift-gen-form} and then using \eqref{e:St-form}, we have
\eqnspl{e:drift-gen-form-2}
{ &\E [ S_{t+1} - S_t \,|\, X_t, Y_t ]
= -\frac{\lambda}{1+\lambda} + \frac{\lambda}{1+\lambda} \frac{N+1}{N} \frac{1}{N+2-X_t}
+ \frac{1}{1+\lambda} \frac{N+1}{N} \frac{X_t-Y_t}{N+3-X_t} \\
&\quad = -\frac{\lambda}{1+\lambda} + \frac{\lambda}{1+\lambda} \frac{N+1}{N} \frac{1}{N+2-X_t}
+ \frac{1}{1+\lambda} \frac{N+1}{N} \frac{-S_t + \lambda(N-X_t)}{N+3-X_t}. }
The term containing $-S_t$ equals
\[ \frac{1}{1+\lambda} \frac{N+1}{N} \frac{-S_t}{N+3-X_t}, \]
agreeing with the first term inside the square brackets in \eqref{e:drift-formula}.
The sum of the remaining terms in \eqref{e:drift-gen-form-2} equals
\eqnsplst{
 &\frac{\lambda}{1+\lambda} \frac{N+1}{N} \left[ -1 + O(N^{-1}) + \frac{1}{N+2-X_t}
    + \frac{N-X_t}{N+3-X_t} \right]. }
A short calculation shows that this yields the remaining terms inside the square brackets 
in \eqref{e:drift-formula}.
\end{proof}

\begin{proof}[Proof of Lemma \ref{lem:moments-new-gen}(i).]
Since $-\eps_N N \to -\infty$, on the event $L^{(N)-}$, 
for sufficiently large $N$, we have
\[ \frac{-2\lambda}{N+2-X_t} + O(N^{-1})
   \ge \frac{S_t}{2(N+3-X_t)}. \]
Substituting this into \eqref{e:drift-formula}, and also using that the remaining term is positive, we 
obtain that
\[ \E [ S_{t+1} - S_t \,|\, \cF_t ]
  \ge \frac{-S_t}{2(1+\lambda)} \frac{N+1}{N} \frac{1}{N+3-X_t}
  \ge \frac{-S_t}{2(1+\lambda)N}, \]
completing the proof.
\end{proof}

\begin{proof}[Proof of Lemma \ref{lem:moments-new-gen}(ii).]
The second term inside the square brackets in \eqref{e:drift-formula} is negative, so we may ignore it. 
On the event $L^{(N)+}$, and taking into account \eqref{e:St-form},
we have $\eps_N N \le S_t \le \lambda (N - X_t)$. This gives that the third term inside the 
square brackets in \eqref{e:drift-formula} is at most $3 \lambda^3 \eps_N^{-2} N^{-2}$.
Due to the assumption \eqref{e:epsN-assump} on $\eps_N$, this is $O(N^{-1} \log^{-1}N)$.
Thus, for sufficiently large $N$, we obtain that
\[ \E [ S_{t+1} - S_t \,|\, \cF_t ]
  \le \frac{-S_t}{1+\lambda} \frac{N+1}{N} \frac{1}{N+3-X_t} + O(N^{-1})
  \le \frac{-S_t}{2(1+\lambda)N}, \]
completing the proof.
\end{proof}

The following estimate, that also follows easily from Lemma \ref{lem:drift-formula}, 
will be needed in Section \ref{sec:scaling-window}.

\begin{lemma}
\label{lem:drift-rough}
%(i) There exists $N_0$ such that on the event $L^{(N)-}$, we have 
%\eqn{e:drift-rough-gen}
%{ \E [ \Delta S_t \,|\, X_t, Y_t ]
%  \ge -\frac{S_t}{2 N} \quad \text{ for $N \ge N_0$.} }
Assuming that $X_t = \frac{\lambda}{1+\lambda} N + b (1+o(1)) \sqrt{N \log N}$
with $b > 0$, we have 
\eqnspl{e:drift-rough}
{ \E [ \Delta S_t \,|\, \cF_t ]
%&= -\frac{S_t}{N} - \frac{2 \lambda}{N} 
%- \frac{S_t}{N} O \Big( \frac{\sqrt{\log N}}{N^{1/2}} \Big)
%+ O \big( \sqrt{\log N}{N^{3/2}} \big) \\
&= -\frac{S_t}{N} \left( 1 + O \left( \sqrt{\frac{\log N}{N}} \right) \right) 
   + O(N^{-1}), }
as $N \to \infty$.
\end{lemma} 

\begin{proof}
From the assumption on $X_t$, we have 
$N+3-X_t = \frac{1}{1+\lambda} N \Big(1 - O \Big( \sqrt{\frac{\log N}{N}} \Big) \Big)$ (with $N+2-X_t$ having the same form). Substituting these into \eqref{e:drift-formula} yields the statement.
\end{proof}

We next compute a general formula for the conditional second moment of the increment of $S_t$.

\begin{lemma}
\label{lem:var-formula}
For all $t \ge 0$ we have
\eqnspl{e:var-formula}
{ &\E [ (S_{t+1} - S_t)^2 \,|\, \cF_t ] \\
  &\quad = \frac{1}{1+\lambda} \bigg[ \lambda + \frac{N+1}{N} \frac{\lambda^2}{N+2-X_t}
      + \frac{N+1}{N} \frac{2 \lambda}{N+2-X_t} \frac{-S_t + \lambda(N-X_t)}{N+3-X_t} \\
  &\qquad\qquad + 2 \frac{N+1}{N} 
      \frac{[-S_t + \lambda(N-X_t)] [-S_t + \lambda(N-X_t) + (N+3-X_t)]}{(N+3-X_t)(N+4-X_t)} \\
  &\qquad\qquad - \frac{N+1}{N} \frac{-S_t + \lambda(N-X_t)}{N+3-X_t} \bigg]. }
\end{lemma}

\begin{proof}
Similarly to the proof of \eqref{e:drift-formula}, we compute
\begin{align*}
 &\E [ (S_{t+1} - S_t)^2 \,|\, \cF_t ]
= \pi^{(0)}_{-1} + \sum_{k=0}^{X_t-Y_t} (\lambda+k)^2 \pi^{(-1)}_k 
+ \sum_{k=0}^{X_t-Y_t} k^2 \pi^{(0)}_k \\
&\quad = \frac{\lambda}{1+\lambda} + \lambda^2 \Pi^{(-1)}_0 + 2 \lambda \sum_{k=1}^{X_t-Y_t} k \pi^{(-1)}_k 
+ \sum_{k=1}^{X_t-Y_t} k^2 (\pi^{(-1)}_k + \pi^{(0)}_k) \\
&\quad = \frac{\lambda}{1+\lambda} + \lambda^2 \Pi^{(-1)}_0 
+ 2 \lambda \sum_{k=1}^{X_t-Y_t} k \pi^{(-1)}_k
+ \sum_{k=1}^{X_t-Y_t} k(k+1) (\pi^{(-1)}_k + \pi^{(0)}_k) \\
&\quad\qquad - \sum_{k=1}^{X_t-Y_t} k (\pi^{(-1)}_k + \pi^{(0)}_k).
\end{align*}
Rewriting the sums over $k$ in terms of the tails $\Pi^{(0)}_\ell$ and $\Pi^{(-1)}_\ell$, 
this equals
\begin{align}
\label{e:var-gen-form}
&\frac{\lambda}{1+\lambda} + \lambda^2 \Pi^{(-1)}_0 
+ 2 \lambda \sum_{\ell=1}^{X_t-Y_t} \Pi^{(-1)}_\ell 
+ 2 \sum_{\ell=1}^{X_t-Y_t} \ell (\Pi^{(-1)}_\ell + \Pi^{(0)}_\ell)
- \sum_{\ell=1}^{X_t-Y_t} (\Pi^{(-1)}_\ell + \Pi^{(0)}_\ell). 
\end{align}
For the sums over $\ell$ we use \eqref{e:sum-1st} and \eqref{e:sum-2nd} to get, in addition 
to the already obtained \eqref{e:ell-sum}, the relation
\eqnspl{e:ell-sum-ell}
{ \sum_{\ell=1}^{X_t-Y_t} \ell (\Pi^{(-1)}_\ell + \Pi^{(0)}_\ell)
&= \frac{1}{1+\lambda} \frac{N+1}{N} \sum_{\ell=1}^{X_t-Y_t} \ell \prod_{i=0}^{\ell-1}
\frac{X_t-Y_t-i}{N+2-Y_t-i} \\
&= \frac{1}{1+\lambda} \frac{N+1}{N} \frac{(X_t-Y_t) (N+3-Y_t)}{(N+3-X_t) (N+4-X_t)}. }
Substituting this into \eqref{e:var-gen-form} and then using \eqref{e:St-form}, we get the
statement. 
%\eqnspl{e:var-gen-form-2}
%{ &\E [ (S_{t+1} - S_t)^2 \,|\, \cF_t ] \\
%%&\quad = \frac{\lambda}{1+\lambda} + \frac{\lambda^2}{1+\lambda} \frac{N%+1}{N} \frac{1}{N+2-X_t}
%%+ \frac{2 \lambda}{1+\lambda} \frac{N+1}{N} \frac{1}{N+2-X_t} \frac{X_t-%Y_t}{N+3-X_t} \\
%%&\quad\qquad + \frac{2}{1+\lambda} \frac{N+1}{N} \frac{(X_t-Y_t) (N+3-%Y_t)}{(N+3-X_t) (N+4-X_t)}
%%- \frac{1}{1+\lambda} \frac{N+1}{N} \frac{X_t-Y_t}{N+3-X_t} \\
%&\quad = \frac{\lambda}{1+\lambda} + \frac{\lambda^2}{1+\lambda} \frac{N+1}{N} \frac{1}{N+2-X_t}
%+ \frac{2 \lambda}{1+\lambda} \frac{N+1}{N} \frac{1}{N+2-X_t} \frac{-S_t + \lambda(N-X_t)}{N+3-X_t} \\
%&\quad\qquad + \frac{2}{1+\lambda} \frac{N+1}{N} 
%\frac{[-S_t + \lambda(N-X_t)] [-S_t + \lambda(N-X_t) + (N+3-X_t)]}{(N+3-X_t)(N+4-X_t)} \\
%&\quad\qquad - \frac{1}{1+\lambda} \frac{N+1}{N} \frac{-S_t + \lambda(N-X_t)}{N+3-X_t}. }
%Factoring out $(1+\lambda)^{-1}$ yields the statement.
\end{proof}

\begin{proof}[Proof of Lemma \ref{lem:moments-new-gen}(iii).]
It is easy to check that each term in the right hand side of \eqref{e:var-formula} is bounded by
a constant dependent only on $\lambda$.
\end{proof}

The following estimate will be needed in Section \ref{sec:scaling-window}.

\begin{lemma}
\label{lem:var-rough}
Assuming that $X_t = \frac{\lambda}{1+\lambda} N + b (1+o(1)) \sqrt{N \log N}$ with $b > 0$, 
and that $|S_t| = O(\sqrt{N \log N})$, we have 
\eqn{e:var-rough}
{ \mathrm{Var} [ \Delta S_t \,|\, \cF_t ]
= 2 \lambda + O \left( \sqrt{\frac{\log N}{N}} \right), }
as $N \to \infty$.
\end{lemma}

\begin{proof}
From the assumption made on $X_t$, we have $N+4-X_t = \frac{1}{1+\lambda} N ( 1 - o(1) )$ (with 
$N+3-X_t$, etc.~having the same form). Hence the right hand side of \eqref{e:var-formula} equals 
\eqnsplst
{ &\frac{1}{1+\lambda} \left[ \lambda + O(N^{-1}) + O(N^{-1}) 
    + 2 \lambda (1+\lambda) + O ( (\log N)/N ) 
    - \lambda + O (\sqrt{(\log N)/N}) \right] \\
  &\quad = 2 \lambda + O \left( \sqrt{\frac{\log N}{N}} \right). }
Taking into account the already proven result Lemma \ref{lem:drift-rough}, the statement now follows.
\end{proof}

We next prove the last estimate from Lemma \ref{lem:moments-new-gen}.

\begin{proof}[Proof of Lemma \ref{lem:moments-new-gen}(iv).]
We write 
\eqnsplst{
 &E_N \Big[ (S_{t+1} - S_t)^2 e^{\eps_N (S_{t+1} - S_t)} \,\Big|\, \cF_t \Big] \\
 &\qquad = e^{-\eps_N} \pi^{(0)}_{-1} 
   + \sum_{k=0}^{X_t-Y_t} (\lambda + k)^2 e^{\eps_N (\lambda + k)} \pi^{(-1)}_k 
   + \sum_{k=0}^{X_t-Y_t} k^2 e^{\eps_N k} \pi^{(0)}_k \\
 &\qquad \le C(\lambda) 
   + C(\lambda) \sum_{k=1}^{X_t-Y_t} k^2 e^{\eps_N k} (\Pi^{(-1)}_k + \Pi^{(0)}_k), }
where in the last step we bounded $(\lambda + k)^2 \le C(\lambda) k^2$ and 
$\pi^{(i)}_k \le \Pi^{(i)}_k$, and we absorbed the $k=0$ terms into the additive constant 
$C(\lambda)$. Since $S_t \ge \eps_N N > 0$, in the formula of Lemma \ref{lemma:exactcomputation}
we have 
\[ \frac{X_t - Y_t - i}{N+2 - Y_t - i}
   \le \frac{X_t - Y_t}{N+2 - Y_t}
   = \frac{\frac{1}{1+\lambda} [ \lambda (N - Y_t) - S_t ]}{N + 2 - Y_t}
   \le \frac{\lambda}{1+\lambda} < 1. \]
Thus we have 
\[ \Pi^{(-1)}_k + \Pi^{(0)}_k
   \le \frac{1}{1+\lambda} \frac{N+1}{N} \left( \frac{\lambda}{1+\lambda} \right)^k, \]
and the claim follows.
\end{proof}
}

\section{Bounds on the support of the stationary measure}
\label{sec:scaling-window}

In this section we {establish asymptotic upper and lower bounds on the support of $\mu^{(N)}$ as $N\to\infty$ by proving Theorems~\ref{thm:scaling-lower} and~\ref{thm:scaling-upper} below}. Recall the constant {$a = \frac{ \sqrt{\lambda}}{1 + \lambda}$} from Section \ref{sec:constant} {and let $A$ denote the constant implicit in the deviation bound $\alpha_N=O(\sqrt{N\log N})$ of Theorem \ref{thm:main}.}

\begin{theorem}
\label{thm:scaling-lower}
For any $\eps > 0$ we have
\eqnst
{ \lim_{N \to \infty} \mu^{(N)} [0, \rho_c N + (a-\eps) \sqrt{N \log N} ]
= 0. }
\end{theorem}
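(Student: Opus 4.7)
By Lemma~\ref{lem:statdis} we study the chain $(X,Y)$ started at $(N,N)$ and must show $P_N(X_{T^+}\le \rho_c N+(a-\varepsilon)\sqrt{N\log N})\to 0$. Since $X_t$ is non-increasing, this event coincides with $\{\sigma\le T^+\}$ where $\sigma:=\inf\{t:X_t\le\rho_c N+(a-\varepsilon)\sqrt{N\log N}\}$. The strategy is a ``race'' between $X$ and $Y$: show that once $X$ first reaches the intermediate level $\rho_c N+(a-\varepsilon/3)\sqrt{N\log N}$ at time $\sigma_*:=\inf\{t:X_t\le\rho_c N+(a-\varepsilon/3)\sqrt{N\log N}\}$, the process $Y$ hits $0$ well before $X$ can drop by the additional $(2\varepsilon/3)\sqrt{N\log N}$ needed to enter the target region.

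Combining Theorem~\ref{thm:main} with Propositions~\ref{prop:T+bnd}--\ref{prop:exit+} applied with $\varepsilon_N\propto\sqrt{\log N/N}$ gives a good event $\mathcal{E}$ of probability $1-o(1)$ on which $T^+\le\delta(1+\lambda)N^2$ and $|S_t|\le C\sqrt{N\log N}$ throughout $0\le t\le T^+$. On $\mathcal{E}\cap\{\sigma\le T^+\}$ automatically $\sigma_*\le\sigma\le T^+$, so it suffices to work conditionally on $\mathcal{F}_{\sigma_*}$ on $\mathcal{E}\cap\{\sigma_*\le T^+\}$. Set $u_N=(\varepsilon/2)N\sqrt{N\log N}$. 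The race reduces to proving the two claims: (a) \emph{fast absorption of $Y$}, $P_N(T^+-\sigma_*>u_N\mid\mathcal{F}_{\sigma_*})=o(1)$, and (b) \emph{slow decrease of $X$}, $P_N\bigl(X_{\sigma_*}-X_{(\sigma_*+u_N)\wedge T^+}\ge(2\varepsilon/3)\sqrt{N\log N}\mid\mathcal{F}_{\sigma_*}\bigr)=o(1)$. Given (a) and (b), on $\mathcal{E}\cap\{\sigma\le T^+\}$ neither bad event occurs, whence $X_{T^+}>\rho_c N+(a-\varepsilon)\sqrt{N\log N}$, a contradiction. Hence $P_N(\sigma\le T^+)=o(1)$, as required.

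Claim (b) is a routine Chernoff estimate: by Lemma~\ref{lemma:exactcomputation} we have $P_N(\Delta X_t=-1\mid\mathcal{F}_t)=\Pi_0^{(-1)}(X_t,Y_t)\asymp 1/N$ whenever $X_t$ is bounded away from $N$, so the decrease of $X$ over $u_N$ steps is stochastically dominated by a Binomial with mean $\sim (\varepsilon/2)\sqrt{N\log N}$, and deviations beyond $(2\varepsilon/3)\sqrt{N\log N}$ are exponentially small in $\sqrt{N\log N}$.

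Claim (a) is the hard part and the rigorous counterpart of heuristic~(II) in Section~\ref{sec:constant}. It asserts that the near-critical mean-reverting process $S_t$ (with per-step variance $\approx 2\lambda$ and restoring drift $\approx -S_t/N$, cf.\ Lemmas~\ref{lem:drift-rough}--\ref{lem:var-rough}) crosses the absorbing barrier $\{S_t=-\ell(X_t)\}$ at a level in $[-(1+\lambda)(a-\varepsilon/3)\sqrt{N\log N},0]$ within $u_N$ steps. I would prove it by constructing, for a suitable $\theta_N\asymp\sqrt{\log N/N}$, an exponential super-martingale of the form $M_t=\exp\bigl(-\theta_N (S_t-S_{\sigma_*})-(t-\sigma_*)\gamma_N(\theta_N)\bigr)$ on the stopped region $\{Y_t\ge 1,\, X_t\le X_{\sigma_*}\}$, reading off the drift function $\gamma_N(\theta)$ from the moment generating function expansion of Lemma~\ref{lem:moment-gen}. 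Optional stopping at $(\sigma_*+u_N)\wedge T^+$ together with Markov's inequality then yields a bound of the form
$$P_N(T^+-\sigma_*>u_N\mid\mathcal{F}_{\sigma_*})\le \exp\bigl(-\theta_N(1+\lambda)(a-\varepsilon/3)\sqrt{N\log N}+u_N\gamma_N(\theta_N)\bigr),$$
and an optimal choice of $\theta_N$ balances the drift and variance contributions in the exponent; the threshold at which this bound vanishes as $N\to\infty$ is what precisely pins down $a=\sqrt{\lambda}/(1+\lambda)$.
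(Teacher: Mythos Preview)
Your overall architecture---a race between $X$ and $Y$ started at the intermediate level $\rho_c N+(a-\eps/3)\sqrt{N\log N}$---is the same as the paper's, and your Claim~(b) is indeed a routine Chernoff bound. The gap is in the proposed proof of Claim~(a).

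The exponential super-martingale technique you outline cannot yield the displayed inequality. Suppose $M_t=\exp\bigl(-\theta_N(S_t-S_{\sigma_*})-(t-\sigma_*)\gamma_N\bigr)$ is a super-martingale on the stopped region; optional stopping gives $E[M_{(\sigma_*+u_N)\wedge T^+}]\le 1$. To extract an upper bound on $P(T^+-\sigma_*>u_N)$ from this you would need a \emph{lower} bound on $M_{\sigma_*+u_N}$ on the event $\{T^+>\sigma_*+u_N\}$. But on that event you only know $S>-L$ with $L=(1+\lambda)(a-\eps/3)\sqrt{N\log N}$, which gives an \emph{upper} bound $M_{\sigma_*+u_N}\le \exp(\theta_N L-u_N\gamma_N)$; there is no useful lower bound since $S$ can be of order $+\sqrt{N\log N}$. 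In other words, the method controls the probability of hitting a far level from above (this is exactly how Proposition~\ref{prop:exit-} is proved), not the probability of failing to hit. Moreover, because the per-step drift of $S$ is $-S_t/N$ and averages to zero while the variance contributes $\lambda\theta_N^2$ to $\gamma_N$, one has $u_N\gamma_N\asymp \sqrt{N}(\log N)^{3/2}\to\infty$ for $\theta_N\asymp\sqrt{\log N/N}$, so even the displayed right-hand side diverges.

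The paper proves Claim~(a) by a different mechanism that respects the mean-reverting nature of $S$: it records the successive returns of $S$ to a neighbourhood of $0$ (Lemmas~\ref{lem:from-above} and~\ref{lem:exit-big}(i)), and shows that at each such return the event $\{\tau<\sigma_2\}$ has probability at least $cN^{-1/2+\eta_1(\eps)}$ (Proposition~\ref{prop:hit-0}(i)). Since there are order $N^{1/2}/\log^4 N$ returns while $X$ decreases by only $o(\sqrt{N\log N})$, the probability of never succeeding is $(1-cN^{-1/2+\eta_1})^{N^{1/2}/\log^4 N}=o(1)$. The per-attempt bound is obtained by coupling $Y$ to a coarse-grained birth--and--death chain on scale $N^{3/8}$ and computing the relevant effective resistance; it is this resistance calculation (not a single exponential martingale) that produces the exponent $\tfrac{1}{2\lambda}(1+\lambda)^2\hat{X}^2\log N$ and hence pins down $a=\sqrt{\lambda}/(1+\lambda)$.
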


\begin{theorem}
\label{thm:scaling-upper}
For any $\eps > 0$ we have
\eqnst
{ \lim_{N \to \infty} \mu^{(N)} [ \rho_c N + (a+\eps) \sqrt{N \log N}, \rho_c N + A \sqrt{N \log N} ]
= 0. }
\end{theorem}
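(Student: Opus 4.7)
The plan is to establish the equivalent statement
$\P(S_{T^+} \leq -(1+\lambda)(a+\eps)\sqrt{N\log N}) = o(1)$. This is equivalent to the theorem because on $\{Y_{T^+} = 0\}$ one has $(1+\lambda)X_{T^+} = \lambda N - S_{T^+}$, so the interval
$[\rho_c N + (a+\eps)\sqrt{N\log N}, \rho_c N + A\sqrt{N\log N}]$ for $X_{T^+}$ coincides with
$[-(1+\lambda)A\sqrt{N\log N}, -(1+\lambda)(a+\eps)\sqrt{N\log N}]$ for $S_{T^+}$; the upper endpoint is already handled by Theorem \ref{thm:main}.

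The core of the argument is a discrete implementation of the Ornstein-Uhlenbeck first-passage heuristic (I) from Section \ref{sec:constant}. Fix a small $\delta > 0$ and cover $[(1+\lambda)(a+\eps),(1+\lambda)A]$ by $O(\delta^{-1})$ sub-intervals of width $\delta$; it suffices to prove, for each centre $b$ of such a slice, that $\P(S_{T^+} \in [-(b+\delta),-b]\sqrt{N\log N}) = o(\delta)$. Let $\sigma_b = \inf\{t : X_t \leq \rho_c N + b\sqrt{N\log N}\}$; note that $\{S_{T^+} \leq -b\sqrt{N\log N}\}$ forces $\sigma_b \leq T^+$, and combined with Propositions \ref{prop:exit-}--\ref{prop:exit+} we may further assume $|S_{\sigma_b}| \leq 2\varepsilon_N N$. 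Starting from this initial condition at time $\sigma_b$, I would construct a discrete supermartingale of the form $f_N(S_t)\,e^{\nu(t-\sigma_b)}$, where $f_N$ is a discrete analogue of the parabolic cylinder function $D_{-\nu}(\,\cdot\,/\sqrt{\lambda N})$ and $\nu = \nu_N$ is of order $(\delta N\sqrt{N\log N})^{-1}$. The moment generating function estimate of Lemma \ref{lem:moment-gen} provides the generator control needed to verify the supermartingale property up to the error terms $O(\theta/N) + O(\theta^3)$, which are negligible for $\theta = \theta_N \sim 1/\sqrt{N}$. Optional stopping at $T^+ \wedge (\sigma_b + \delta N\sqrt{N\log N})$, together with the parabolic cylinder asymptotics (\ref{e:D-asymp}), then yields the desired tail bound, and the union bound over the $O(\delta^{-1})$ slices completes the proof.

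\textbf{Main obstacle.} The sharp constant $a = \sqrt{\lambda}/(1+\lambda)$ emerges precisely from the balance between the variance coefficient $2\lambda$ in Lemma \ref{lem:var-rough} and the exponent $M^2/2$ in the parabolic-cylinder asymptotic (\ref{e:D-asymp}): the ratio $M^2/(2\lambda)$ passes through $1$ exactly at the level corresponding to $c = a$, marking the transition between lower excursions of $S_t$ being ``many'' versus ``few'' within the available time window. Preserving this sharpness through the discrete-continuous approximation is the principal technical challenge. It requires (i) calibrating the exponential-martingale parameter so that the Lemma \ref{lem:moment-gen} remainder does not spoil the leading exponent, (ii) constructing the discrete test function $f_N$ and uniformly controlling its discrepancy from $D_{-\nu}$ over the relevant range of $S_t$, and (iii) tracking the constant drift $-2\lambda/N$ from Lemma \ref{lem:drift-rough}, which shifts the equilibrium of the limiting OU to $-2\lambda$ and is the ultimate source of the positive sign of the shift constant $a$.
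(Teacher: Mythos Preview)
Your plan takes a genuinely different route from the paper. The paper explicitly states that the proof is ``formally independent'' of the OU/parabolic-cylinder heuristic, and it does \emph{not} attempt to build a discrete analogue of $D_{-\nu}$. Instead it runs an iterated-attempt argument: starting from $\bar{s}_0 = \inf\{t : X_t < \rho_c N + (2A+1)\sqrt{N\log N}\}$, it defines a sequence of stopping times $\bar{s}'_0,\bar{s}'_1,\ldots$ at which $S$ has returned below $4\lambda+2$; from each $\bar{s}'_i$ one waits for $S$ to reach $N^{1/2}$ or $Y$ to hit $0$ (Lemma \ref{lem:exit-big}(ii)). Lemma \ref{lem:exit-big-2} guarantees each such excursion lasts at least $N/\log^2 N$ steps, so there are at most $N^{1/2}\log^3 N$ attempts before $\hat{X}$ drops below $a+\eps$. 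The decisive input is Proposition \ref{prop:hit-0}(ii): the probability that $Y$ hits $0$ in a single attempt is $\le C N^{-1/2-\eta_2}$, and a union bound finishes. The constant $a$ is extracted inside Proposition \ref{prop:hit-0}(ii) by coarse-graining $Y$ on scale $N^{3/8}$, coupling to a nearest-neighbour birth-and-death chain (Lemma \ref{lem:stoch-dom-2}), and computing its resistances. The resistance ratio $(1-\bar f_k)/\bar f_k$ is obtained via a one-parameter exponential martingale with frozen drift on each coarse interval (Lemmas \ref{lem:bartheta*}, \ref{lem:f_k-est}), and the resulting resistance sum is a discrete Laplace integral whose exponent $\frac{1}{2\lambda}(1+\lambda)^2 \hat x^2 \log N$ produces the threshold $a=\sqrt{\lambda}/(1+\lambda)$.

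Your plan is not wrong in spirit, but the part you label ``main obstacle'' is in fact the whole proof, and you have not carried it out. Turning Lemma \ref{lem:moment-gen} into the global supermartingale inequality $\E[f_N(S_{t+1})e^{\nu}\mid\cF_t]\le f_N(S_t)$ with a state-dependent drift $-S_t/N$, unbounded upward jumps, $\nu\asymp (N\sqrt{N\log N})^{-1}$, and $|S_t|$ ranging up to $\Theta(\sqrt{N\log N})$, while preserving the exact exponent, is precisely the difficulty the paper circumvents via coarse-graining: freezing the drift on intervals of length $N^{3/8}$ reduces the problem to a single exponential martingale per interval, and the state-dependence is then absorbed into the $\bar g_k$ of an explicit birth-and-death chain. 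Your $f_N$ would have to satisfy a second-order difference inequality whose coefficients match $D_{-\nu}$ to within $o(1/N)$ uniformly, and you give no construction of such an $f_N$ nor any control of the discrepancy. There is also a minor notational slip: in your slicing, $\{S_{T^+}\le -b\sqrt{N\log N}\}$ corresponds to $X_{T^+}\ge \rho_c N + \tfrac{b}{1+\lambda}\sqrt{N\log N}$, so the levels defining $\sigma_b$ and the $S$-slices differ by a factor $1+\lambda$; this is harmless but should be reconciled.
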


{Together with Theorem~\ref{thm:main}, Theorems~\ref{thm:scaling-lower} and~\ref{thm:scaling-upper} imply our main result Theorem~\ref{thm:shift}.
%Our arguments give $a_* = \frac{\sqrt{\lambda}}{\sqrt{2} (1+\lambda)}$, that is off by a factor of 
%$\sqrt{2}$ from the heuristic value $a$ derived in Section \ref{sec:constant}.
%\emph{AJ comment: I believe that with a more careful argument, namely adapting the course-graining
%argument of Section \ref{sec:ub-support} for a lower bound, we can improve $a_*$ to $a$.}
To prove Theorem~\ref{thm:scaling-lower} and Theorem~\ref{thm:scaling-upper}, we need several auxiliary results, namely Lemmas \ref{lem:from-above}--\ref{lem:exit-big-2}
and Proposition \ref{prop:hit-0} below.}
As the proof is altogether quite lengthy, we first give an outline of the argument in the next section.

\subsection{Outline of the proof of Theorems~\ref{thm:scaling-lower} and \ref{thm:scaling-upper}.}
\label{ssec:outline}

The mean and variance of $\Delta S_t$ given by Lemmas \ref{lem:drift-rough} and \ref{lem:var-rough}
suggest that if space is rescaled by $1/\sqrt{N}$ and time is rescaled by $1/N$, then the process $S_t$ 
converges to an Ornstein-Uhlenbeck process over time-intervals of duration $O(N)$ (i.e.~rescaled 
time-intervals of duration $O(1)$). In particular, the "typical" deviation of $S_t$
from the "equilibrium" value $0$ should be $O(\sqrt{N})$, that is, the process $(X_t,Y_t)$
should stay, most of the time, within vertical distance $O(\sqrt{N})$ from the straight line 
of Figure \ref{fig:lx}. On the other hand, since it takes as much as $\Theta(N)$ time steps for $X_t$ 
to decrease by $1$, the process $S_t$ has "many chances" to make deviations of size $\gg \sqrt{N}$
from the "equilibrium" value $0$. The proof of our theorems amount to determining when exactly will
the probability of large downward deviations be significant enough to allow the process $Y_t$ to hit $0$.
The exact answer (i.e.~the value of the constant $a$ in the theorems) can be guessed by 
extrapolating the Ornstein-Uhlenbeck limit to deviations of $O(\sqrt{N \log N})$. 

%We provide this 
%heuristic computation in Section \ref{ssec:heuristic} of the Appendix, as we believe it is of independent 
%interest. However, we stress that our proofs are logically independent of this heuristic, and we will 
%continue to work with the discrete model. We believe that our discrete-based approach has the benefit 
%of being potentially useful for more refined estimates of the stationary dynamics and analyzing avalanches.

Estimating the probability that $Y_t$ hits $0$ during a large downward deviation has the
following ingredients. First, assuming for a moment (for simplicity) that $X_t$ does not 
change at all, we have a hitting problem for the one-dimensional process $S_t/\sqrt{N}$.
We want to estimate the probability that starting with $S_{t_0} / \sqrt{N} \approx 0$, this
process exits an interval of the form $[-b \sqrt{\log N}, 1]$ at the left end-point. The same
exit probability for the limiting diffusion $\xi_s$ can be computed exactly, using that for a
suitable function $\varphi(x)$ the process $s \mapsto \varphi(\xi_s)$ is a martingale, giving 
\[ \mathbf{P}_0 [ \text{$\xi$ hits $-b \sqrt{\log N}$ before $1$} ]
   = \frac{\varphi(1) - \varphi(0)}{\varphi(1) - \varphi(-b \sqrt{\log N})}. \]
Asymptotics of the function $\varphi$ then yields that if $b < a(\lambda)$, then 
the exit probability is $N^{-1/2 + \eta}$ for some $\eta > 0$, where as if $b > a(\lambda)$
then it is $N^{-1/2-\bar{\eta}}$ for some $\bar{\eta} > 0$. As we will show, the number 
of chances for the discrete time process to make a large downward deviation is 
$N^{1/2} (\log N)^{\pm \Theta(1)}$ with high probability, so the value $a$ indeed marks 
the required transition point.

Suggested by the above heuristic, our strategy will be to show that 
$t \mapsto \varphi(S_t/\sqrt{N})$ is an \emph{approximate} martingale, allowing us to 
estimate the exit probability for the discrete time process. We will also need to take into 
account that $X_t$ (slowly) changes. We will need estimates on the time taken to exit the
interval $[-b \sqrt{\log N},1]$, and show that this is cannot be much shorter than $N$ or 
much longer than $N (\log N)^{O(1)}$. Finally, we also need to deal with estimates on how 
long it takes for $S_t$ to make upward excursion "away from equilibrium".

\subsection{Reducing the proof to some auxiliary statements}
\label{ssec:lemmas}

{\noindent\emph{Stopping time notation.} So far, we have worked exclusively with the canonical filtration
$\cF_t = \sigma(X_s, Y_s : 0 \le s \le t)$, $t \ge 0$. To lighten notation, we introduce the following conventions for the remainder of the paper:} if $t_1, \sigma_1$ are stopping times with 
respect to the canonical filtration (which will be the case unless stated otherwise), we abbreviate 
$\sigma_1(t_1) := \sigma_1 \circ \theta_{t_1}$, where $\theta_t$, $t \ge 0$ are the time-shift 
operators. In particular, $\sigma_1(t_1)$ is a stopping time with respect to the filtration 
$\cF_{t_1+t}$, $t \ge 0$.

Lemma \ref{lem:from-above} states that, no matter where the process is at a given time, the upward 
deviation will become smaller than a positive constant which does not depend on $N$, 
within $N \log^3 N$ steps, with very high (super-polynomial) probability.

\begin{lemma}
\label{lem:from-above} \ \\
Assume that $t_1$ is a stopping time at which $S_{t_1} > 4 \lambda + 2$. 
Let $\sigma_1 = \inf \{ t \geq 0 : S_{{t}} \le 4 \lambda + 2 \} $.
%We have 
%\[ \P [ \sigma > t_0 + 4 N \log N ] 
%   \le \frac{1}{2}. \]
There exists $c > 0$ such that we have 
\[ \P [ \sigma_1(t_1) > N \log^3 N \,|\, \cF_{t_1} ] 
\le N^{-c \log N}. \]
\end{lemma}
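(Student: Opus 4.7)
The plan is to exploit two observations about $(S_t)$: a multiplicative drift bound in the regime $S_t > 0$, and a deterministic lower bound of $-1$ on the increments $\Delta S_t$. Together these allow a simple supermartingale argument for the truncated process $W_t := S_{t_1+t}\bfone_{\{\sigma > t\}}$, with $\sigma := \sigma_1(t_1)$, which we close off by Markov's inequality.

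For the drift bound, we start from the exact formula~\eqref{e:drift-gen-form-2} and isolate the $-S_t$ contribution:
\[
\E\lrb{\Delta S_t \,|\, \cF_t} = E_0(X_t) - \frac{(N+1)\,S_t}{(1+\lambda)N(N+3-X_t)},
\]
valid whenever $Y_t \geq 1$. A direct calculation---clearing denominators and reducing to a quadratic inequality in $u := N+1-X_t$---shows $E_0(X_t) \leq 0$ for every $X_t \in [0,N]$ and $N \geq 2$. Since $(N+1)/(N+3-X_t) \geq 1/2$ for $X_t \in [0,N]$, this gives
\[
\E\lrb{\Delta S_t \,|\, \cF_t} \leq -\frac{c\,S_t}{N}, \qquad c := \frac{1}{2(1+\lambda)},
\]
whenever $S_t > 0$. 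Separately, a case analysis of the three possible transition types (sleep; move-and-land-on-empty; move-and-leave-through-boundary) shows $\Delta S_t \in \{-1\} \cup \mathbb{Z}_{\geq 0} \cup (\lambda + \mathbb{Z}_{\geq 0})$, and in particular $\Delta S_t \geq -1$ deterministically.

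On $\{\sigma > t\}$ we have $S_{t_1+t} > 4\lambda + 2$ and, by the increment bound, $S_{t_1+t+1} \geq 4\lambda + 1 > 0$. Together with the drift bound this gives
\[
\E\lrb{W_{t+1} \,|\, \cF_{t_1+t}} \leq \bfone_{\{\sigma > t\}}\,\E\lrb{S_{t_1+t+1} \,|\, \cF_{t_1+t}} \leq (1-c/N)\, W_t.
\]
Iterating yields $\E\lrb{W_T \,|\, \cF_{t_1}} \leq S_{t_1}(1-c/N)^T$. Since $W_T \geq (4\lambda+2)\bfone_{\{\sigma > T\}}$ and $S_{t_1} \leq \lambda(N-X_{t_1}) \leq \lambda N$ deterministically, Markov's inequality gives
\[
\P\lrb{\sigma > T \,|\, \cF_{t_1}} \leq \frac{\lambda N}{4\lambda+2}\,\textup{e}^{-cT/N}.
\]
Setting $T = N\log^3 N$, the right-hand side is at most $N^{-c'\log N}$ for any $c' > 0$ and $N$ sufficiently large, which proves the claim.

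The principal obstacle is the algebraic verification that $E_0(X_t) \leq 0$; the remainder is a routine supermartingale computation. A minor technical point is that the drift bound implicitly requires $Y_{t_1+t} \geq 1$, so the atypical event that the chain fixates while $S$ is still above the threshold must be handled separately, which can be done with the required probability via Proposition~\ref{prop:T+bnd}.
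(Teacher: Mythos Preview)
Your approach is correct and takes a genuinely different route from the paper's. The paper proceeds via the logarithm: it sets $U_t = \log S_{\sigma_1 \wedge t}$, uses concavity to bound $\E[\Delta U_t \mid \cF_t] \le \frac{1}{S_t}\E[\Delta S_t \mid \cF_t]$, invokes Lemma~\ref{lem:drift-rough} (which is only stated for $X_t = \rho_c N + \Theta(\sqrt{N\log N})$) to get drift $\le -\frac{1}{2N}$, and then combines optional stopping with Markov's inequality to obtain $\P[\sigma_1 > 4N\log N] \le \tfrac12$, which must be iterated $\Theta(\log^2 N)$ times via the strong Markov property to reach $N^{-c\log N}$. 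Your argument bypasses both the log transform and the iteration: by establishing $E_0(X_t) \le 0$ for \emph{all} $X_t \in [0,N]$ you obtain a multiplicative drift bound unrestricted in $X_t$, and the deterministic bound $\Delta S_t \ge -1$ then lets you control $W_t = S_{t_1+t}\bfone\{\sigma>t\}$ directly as a geometrically contracting supermartingale, yielding the super-polynomial tail in a single Markov step. This is shorter and more robust, since it does not depend on $X_t$ remaining in the scaling window, which the paper's proof does implicitly through Lemma~\ref{lem:drift-rough}.

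One small correction to your closing remark: Proposition~\ref{prop:T+bnd} does not dispose of the event that the chain fixates while $S$ is still above the threshold --- that proposition bounds the stabilisation \emph{time}, not the stabilisation \emph{location}. The clean fix is to observe that in every application of the lemma in Section~\ref{ssec:iterate} one has $X_{t_1} = \rho_c N + b\sqrt{N\log N}$ with $b>0$, and $X$ moves left by only $O(\log^3 N)$ during $N\log^3 N$ steps with probability $1 - N^{-c\log N}$ (cf.\ Lemma~\ref{lem:X-diff}); hence $\ell(X_{t_1+t})>0$ throughout, which forces $Y_{t_1+t}>0$ whenever $S_{t_1+t}>0$, and the issue does not arise. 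The paper's proof carries the same implicit dependence via its use of Lemma~\ref{lem:drift-rough}.
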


Lemma \ref{lem:exit-big} below states that,
if the number of particles is $\rho_c N +  \Theta(\sqrt{N \log N}) $ and the upwards deviation is smaller than a certain constant, then within $N \log^3 N$ steps either the {$(X,Y)$-chain} will reach an absorbing state or the upward deviation will increase up to $O( \sqrt{N})$ with very high probability.

\begin{lemma}
\label{lem:exit-big} \ \\
(i) Assume that $t_2$ is a {stopping time} at which
\begin{equation}\label{eq:t2assumption}
\rho_c N + (a-2\eps) \sqrt{N \log N}
\le X_{t_2}
\le \rho_c N + (a-\eps) \sqrt{N \log N}. \end{equation}
and that we have
\[ S_{t_2} 
\le 4\lambda + 2. \]
{Let $\sigma_2 = \inf \{ t \ge 0 : \text{$S_{t} \ge N^{1/2}$} \}$ and 
{ recall that} $\tau = \inf \{ t \ge 0 : \text{$Y_{t} = 0$} \}$.
There exists $c > 0$ such that we have 
\[ \P [ (\sigma_2 \wedge \tau) (t_2) > N \log^3 N \,|\, \cF_{t_2} ] 
\label{e:exit-big}
\le N^{-c \log N}. \]}
(ii) The conclusion of part (i) remains valid if we assume that $t_2$ is a {stopping time} at which
\begin{equation}\label{eq:t2assumption-2}
\rho_c N + (a+\eps) \sqrt{N \log N}
\le X_{t_2}
\le \rho_c N + {(2 A+1)} \sqrt{N \log N}. \end{equation}
\end{lemma}

Lemma \ref{lem:exit-big-2} below complements Lemma \ref{lem:exit-big}(ii) by
providing an upper bound on the probability that the exit time is less than $b N$ 
for a suitable $b = b(\lambda) > 0$, assuming the starting position 
satisfies $S \approx 0$.

\begin{lemma}
\label{lem:exit-big-2} \ \\
Assume that $\bar{t}_2$ is a {stopping time} at which
\begin{equation}
\label{eq:bart2assumption-2}
\rho_c N + (a+\eps) \sqrt{N \log N}
\le X_{\bar{t}_2}
\le \rho_c N + {(2 A+1)} \sqrt{N \log N}, 
\end{equation}
and that for some $C = C(\lambda)$ we have
\eqn{e:assumptions-new}
{ |S_{\bar{t}_2}| 
  \le C. }
Let $\sigma_2 = \inf \{ t \ge 0 : \text{$S_{t} \ge N^{1/2}$} \}$ and 
let $\tau = \inf \{ t \ge 0 : \text{$Y_{t} = 0$} \}$.
There exists $b = b(\lambda) > 0$ such that we have 
\eqn{e:short-exit}
{ \P [ (\sigma_2 \wedge \tau) (\bar{t}_2) < b N \,|\, \cF_{\bar{t}_2} ] 
\le \frac{1}{2}. }

\end{lemma}

%\begin{lemma}
%\label{lem:exit-big-2} \ \\
%Assume that $\bar{t}_2$ is a {stopping time} at which
%\begin{equation}
%\label{eq:bart2assumption-2}
%\rho_c N + (a+\eps) \sqrt{N \log N}
%\le X_{\bar{t}_2}
%\le \rho_c N + {(2 A+1)} \sqrt{N \log N}, 
%\end{equation}
%and that we have
%\eqn{e:assumptions}
%{ S_{\bar{t}_2} \le 4 \lambda + 2 \qquad \text{and} \qquad 
%Y_{\bar{t}_2} \ge (1+\lambda)(a+\eps/2)\sqrt{N \log N}. }
%Let $\sigma_2 = \inf \{ t \ge 0 : \text{$S_{t} \ge N^{1/2}$} \}$ and 
%let $\tau = \inf \{ t \ge 0 : \text{$Y_{t} = 0$} \}$.
%There exists $c > 0$ such that we have 
%\eqn{e:short-exit}
%{ \P [ (\sigma_2 \wedge \tau) (\bar{t}_2) < N \log^{-2} N \,|\, \cF_{\bar{t}_2} ] 
%\le N^{-c \log N}. }
%\end{lemma} 

{Proposition \ref{prop:hit-0} below gives estimates for the probability, under the respective assumptions of Lemma \ref{lem:exit-big}(i) and Lemma \ref{lem:exit-big-2}, that the process will die out before `too many' particles are activated. It shows that there is a transition in the behaviour depending on whether $X_{t_2} < \rho_c N + (a-\eps) \sqrt{N \log N}$ or $X_{t_2} > \rho_c N + (a+\eps) \sqrt{N \log N}$.
}

\begin{proposition}[Conditional probability of stabilisation]
\label{prop:hit-0} \ \\
(i) Under the assumptions of Lemma \ref{lem:exit-big}(i) there exist $c > 0$, $\eta_1 = \eta_1(\eps,\lambda) > 0$ such that
\eqnst
{ \P [ \tau (t_2) < \sigma_2 (t_2) \,|\, \cF_{t_2} ] 
\ge c N^{-1/2+\eta_1}. } 
(ii) Under the assumption \eqref{eq:bart2assumption-2} on $\bar{t_2}$ and 
assuming that 
\eqn{e:assumptions}
{ S_{\bar{t}_2} 
  \le 4 \lambda + 2 \qquad \text{and} \qquad 
  Y_{\bar{t}_2} \ge (1+\lambda)(a+\eps/2)\sqrt{N \log N}, }

there exist $C > 0$, $\eta_2 = \eta_2(\eps,\lambda) > 0$ such that
\eqnst
{ \P [ \tau (\bar{t}_2) < \sigma_2 (\bar{t_2}) \,|\, \cF_{\bar{t}_2} ] 
\le C N^{-1/2-\eta_2}. } 
%(iii) Under the assumptions of Lemma \ref{lem:exit-big-2}, we have that there exists $C > 0$, $\eta_3 = \eta_3(\eps) > 0$ such that
%\eqnst
%{ \P [ \tau < \sigma_2 ] 
%  \le C N^{-\eta_3}. }  
\end{proposition}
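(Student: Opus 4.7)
The plan is to make the Ornstein--Uhlenbeck heuristic of Section~\ref{sec:constant} rigorous via a discrete scale-function martingale. With $b := (X_{t_2} - \rho_c N)/\sqrt{N\log N}$ (and analogously $\bar b$ in case~(ii)) and $M_b := (1+\lambda)\, b / \sqrt{\lambda}\,\sqrt{\log N}$, the limiting OU process $\mathrm{d}R = -R\, \mathrm{d}s + \sqrt{2}\,\mathrm{d}B$ has scale function $\int_0^\cdot e^{u^2/2}\, \mathrm{d}u$. Taking into account the constant $-2\lambda/N$ correction in the drift identified in Lemma~\ref{lem:drift-rough}, I would set
\[
F(s) := \int_0^{(s+2\lambda)/\sqrt{\lambda N}} e^{u^2/2}\,\mathrm{d}u,
\]
so that the identity $F'(s)\,(-(s+2\lambda)/N) + \lambda\, F''(s) = 0$ combined with Lemmas~\ref{lem:drift-rough}--\ref{lem:var-rough} produces a cancellation of the leading-order terms in $\E[\Delta F(S_t)\mid \cF_t]$. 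The remaining error—the subleading contributions to drift and variance together with the cubic Taylor remainder (controlled via the exponential moment bound of Lemma~\ref{lem:moment-gen} applied to $\Delta Y_t$)—is then shown to be $o(e^{M_b^2/2}/M_b)$ over the relevant time window, making $F(S_t)$ an approximate martingale.

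The next step is to apply optional stopping at $T^\star := (\tau\wedge\sigma_2)(t_2) \wedge (t_2 + N\log^3 N) \wedge \tau_X^\star$, where $\tau_X^\star$ is the first time $X_t$ leaves a window of width $\sqrt{N}$ around $X_{t_2}$. Lemma~\ref{lem:exit-big} (resp.~\ref{lem:exit-big-2}) together with a Chernoff bound on the number of $X$-decrements—whose mean is $O(\log^3 N)$ over $N\log^3 N$ steps, as $\P[\Delta X_t = -1\mid\cF_t] = O(1/N)$—yield $T^\star = (\tau\wedge\sigma_2)(t_2)$ with probability at least $1 - N^{-c\log N}$. Using the Gaussian asymptotic $\int_0^M e^{u^2/2}\,\mathrm{d}u = (1+o(1))\, e^{M^2/2}/M$ and $S_\tau = -\ell(X_\tau)$, one gets $F(S_\tau) = -(1+o(1))\, e^{M_b^2/2}/M_b$ while $F(S_{\sigma_2}) \in [C_1(\lambda), C_2(\lambda)]$ (overshoot at $\sigma_2$ is controlled via sub-exponential increment tails). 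Solving the resulting approximate identity
\[
F(S_{t_2}) \; \approx \; F(S_\tau)\,\P[\tau(t_2) < \sigma_2(t_2)\mid\cF_{t_2}] \, + \, F(S_{\sigma_2})\,\P[\sigma_2(t_2) < \tau(t_2)\mid\cF_{t_2}]
\]
and plugging in $M_b^2/2 = \tfrac12\log N \mp \Theta(\eps)\log N + O(\eps^2\log N)$ for $b\in[a-2\eps,a-\eps]$ (case~(i), lower bound on $\P[\tau < \sigma_2]$) or $b\in[a+\eps, 2A+1]$ (case~(ii), upper bound) yields the claimed $\geq c\, N^{-1/2+\eta_1}$ and $\leq C\, N^{-1/2-\eta_2}$ rates, respectively. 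In case~(ii) the extra assumption $Y_{\bar t_2} \geq (1+\lambda)(a+\eps/2)\sqrt{N\log N}$ enters precisely to guarantee that $S_{\bar t_2}/\sqrt{\lambda N}$ is at least $\sqrt{\log N}$ away from $-M_{\bar b}$, so that $|F(S_{\bar t_2})| = o(|F(S_\tau)|)$ and does not dominate the identity.

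The principal difficulty is quantifying the error in the martingale approximation. Because $F'$ grows as $e^{M_b^2/2} = N^{1/2\pm\Theta(\eps)}$, almost as large as the target probability $\asymp M_b e^{-M_b^2/2}$, the subleading $O(\sqrt{\log N / N})$ relative corrections from Lemmas~\ref{lem:drift-rough}--\ref{lem:var-rough}, together with the cubic Taylor remainder, must be carefully accumulated over a $\leq N\log^3 N$-step window and still shown to contribute only polylogarithmic overhead. Restriction of the trajectory to $|S_t| \leq (1+o(1))M_b\sqrt{\lambda N}$ (automatic up to $T^\star$) and the control on $X_t$ via $\tau_X^\star$ (so that $M_b$ itself does not drift by more than $o(1)$ during the run) are essential to keeping all error terms genuinely $o(e^{M_b^2/2}/M_b)$.
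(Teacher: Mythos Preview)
Your approach is correct and is a genuinely different route from the paper's. The paper does \emph{not} work with the global scale function $F$. Instead it coarse-grains the $Y$-axis into intervals $I_k$ of width $2N^{3/8}$, uses the monotonicity of the transition kernel in $x$ and $y$ (Lemma~\ref{lem:Y-compare}) to stochastically bound the increments of $Y$ by i.i.d.\ steps within each $I_k$, finds for each $k$ a small parameter $\theta_k^*$ making $\exp(\theta_k^* Z'_s)$ an exact martingale (Lemma~\ref{lem:theta*}), and from this extracts the one-step exit probabilities $f_k$ (Lemma~\ref{lem:f_k-est}). It then couples $Y$ to a nearest-neighbour birth-and-death chain on the coarse lattice and computes its hitting probability by an exact effective-resistance identity, which after a Riemann-sum computation reproduces the Gaussian integral $\int e^{u^2/2}\,du$ and yields the exponent $\frac{(1+\lambda)^2}{2\lambda}\hat X^2\log N$.

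Your scale-function-plus-optional-stopping argument is more direct and, in principle, pins down the leading constant rather than just the exponent; it also bypasses the monotonicity input entirely. The trade-off is exactly the one you flag: the non-martingale correction to $F(S_t)$ has to be summed pathwise over $N\log^3 N$ steps while $F'(S_t)$ ranges up to $e^{M_b^2/2}/\sqrt{\lambda N}$, so the proof lives or dies on the worst-case bound (per-step error $\lesssim e^{M_b^2/2}\,\mathrm{polylog}(N)/N^{3/2}$) being small enough --- which it is, but only with a $N^{-1/2}$ margin. The paper's coarse-graining avoids this delicate global accumulation: the exponential martingale on each $I_k$ is exact, and all approximation is pushed into the $O(MN^{-3/8})$ overshoot at the interval boundaries and the passage to a Riemann integral, both of which come with comfortable slack. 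In short, your argument is cleaner when the drift/variance Lemmas~\ref{lem:drift-rough}--\ref{lem:var-rough} are as sharp as they are here, while the paper's is more robust and makes explicit use of the model's monotonicity.
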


The rest of the paper is organised as follows. In Section \ref{ssec:iterate} we first prove Theorem \ref{thm:scaling-lower} assuming 
Lemmas \ref{lem:from-above} and \ref{lem:exit-big}(i) and
Proposition \ref{prop:hit-0}(i), and then prove Theorem 
\ref{thm:scaling-upper} assuming 
Lemmas \ref{lem:exit-big}(ii) and \ref{lem:exit-big-2} and Proposition \ref{prop:hit-0}(ii).
In Sections \ref{ssec:from-above} and \ref{ssec:hit-0} we prove the three lemmas and the proposition.

\subsection{Proof of Theorems~\ref{thm:scaling-lower} and~\ref{thm:scaling-upper}}
\label{ssec:iterate}

\begin{proof}[{Proof of Theorem \ref{thm:scaling-lower} assuming Lemmas
\ref{lem:from-above} and \ref{lem:exit-big} and Proposition \ref{prop:hit-0}(i).}]
{Let 
$$
s_0 := \inf \{ t \in \mathbb{N}_0 \, : \, X_t < \rho_c N + (a-\eps) \sqrt{N \log N}  \}.
$$
If $s_0 = \infty$, then   almost surely the process has
stabilized with at least $\rho_c N + (a-\eps) \sqrt{N \log N}$ particles, {an event that does not contribute to the probability mass to be bounded}.  
Assume now that $s_0 < \infty$.}
{Let us define the stopping time
\begin{align}
\sigma_1 & := \inf\{  t \geq 0 \, : \, S_{t} \leq 4 \lambda + 2  \}
\end{align}
and recall that
\eqnst
{ \sigma_2 = \inf\{  t \geq 0 \, : \, S_{t} \geq \sqrt{N}  \} \qquad \text{ and } \qquad
\tau = \inf\{  t \geq 0 \, : \, Y_{t} =0  \}. }}%
We first deduce from Lemma \ref{lem:from-above} that with probability at least 
$1 - N^{-c \log N}$ we have 
\[ s'_0 
:= s_0 + \sigma_1(s_0)
\le s_0 + N \log^3 N, \]
and {$S_{s'_0} \le 4\lambda+2$.} 

We now inductively define a sequence of stopping times 
$s'_0, \dots, s'_R$, with a random 
$0 \le R \le \lceil N^{1/2}/\log^4 N \rceil$, and a notion of `success'/`failure', as follows
(see Figure \ref{fig:s'_i} for an illustration of the scheme).
\begin{figure}
    \centering
    \setlength{\unitlength}{0.75cm}
    \begin{picture}(16.7,8)
        \put(0,2){\framebox(1.6,1.2){\parbox{1.2cm}{time $s'_i$ \\ $S \approx 0$}}}
        \put(0.8,3.4){\vector(1,1){0.75}}
        \put(2,0){\framebox(4,0.75){$\sigma_2 \wedge \tau > N \log^3 N$}}
        \put(6.2,0){$\longrightarrow$}
        \put(7.2,0){\parbox{3cm}{stop with $R=i$ \\ \textbf{failure}}}
        \put(0.8,1.8){\vector(1,-1){0.75}}
        \put(2,4){\framebox(4,0.75){$\sigma_2 \wedge \tau \le N \log^3 N$}}
        \put(4.8,4.95){\vector(1,1){1}}
        \put(6,6){\framebox(1.6,1.2){\parbox{1.2cm}{$\tau < \sigma_2$ \\ $Y = 0$}}}
        \put(7.8,6){$\longrightarrow$}
        \put(9,6){\parbox{3cm}{stop with $R = i$ \\ \textbf{success}}}
        \put(4.8,3.8){\vector(1,-1){1}}
        \put(6,2){\framebox(2,1.2){\parbox{1.5cm}{$\tau > \sigma_2$ \\ $S \approx \sqrt{N}$}}}
        \put(8.2,2.6){\vector(2,1){1.6}}
        \put(10,3){\framebox(3.2,0.75){$\sigma_1 > N \log^3 N$}}
        \put(13.3,3){$\longrightarrow$}
        \put(14.2,3){\parbox{3cm}{stop with $R=i$ \\ \textbf{failure}}}
        \put(8.2,2.4){\vector(2,-1){1.6}}
        \put(10,1){\framebox(3.2,0.75){$\sigma_1 \le N \log^3 N$}}
        \put(13.3,1){$\longrightarrow$}
        \put(14.2,1){\parbox{3cm}{reached time $s'_{i+1}$ \\ $S \approx 0$}}  
    \end{picture}
    \caption{The scheme for defining the stopping times $s'_i$ in the proof of Theorem \ref{thm:scaling-lower},
    with the cases leading to success/failure. Apart possibly from the initial index $i = 0$,
    we have $S \approx 0$ at time $s'_i$.}
    \label{fig:s'_i}
\end{figure}
The induction has been initiated with the definition of $s'_0$. Assuming $s'_i$ has been 
defined for some $0 \le i < N^{1/2}/\log^4 N$, we consider whether 
{$\sigma_2(s'_i) \wedge \tau(s'_i) > N \log^3 N$}. If yes, we set $R = i$, 
and declare that failure occurred. 
Otherwise, on the event {$\tau(s'_i) < \sigma_2(s'_i)$}
we set $R = i$ and say that success occurred. Further, on the event 
{$\tau(s'_i) > \sigma_2(s'_i)$},
we in addition consider $s''_i := s'_i + \sigma_2(s'_i)$, and whether 
{$\sigma_1(s''_i) > N \log^3 N$}.
If this happens, we set $R = i$ and declare that failure occurred. If not, we define 
{$s'_{i+1} := s''_i + \sigma_1(s''_i)$}. 
If $i+1$ reaches the value $\lceil N^{1/2}/\log^4 N \rceil$,
we stop the process with $R = \lceil N^{1/2}/\log^4 N  \rceil$ and declare that failure occurred.

Observe that by the weak law of large numbers, for any function $f(N) \to \infty$, we have {by Lemma \ref{lemma:exactcomputation} and by our assumptions}  
that during $N f(N)$ steps the $X$-coordinate moves to the left by $(1 \pm o(1)) f(N)$ with 
probability going to $1$. It follows that if for some $i < N^{1/2}/\log^4 N$ failure 
did not occur by step $i$, then we have, as $N$ becomes large, 
\[ {X_{ s'_i}} 
\ge \rho_c N + (a-\eps) \sqrt{N \log N} 
- \frac{1 + o(1)}{N} \frac{N^{1/2}}{\log^4 N} C N \log^3 N 
\ge \rho_c N + (a-2\eps) \sqrt{N \log N}. \]
Hence the bound of Lemma \ref{lem:exit-big} can be applied at the 
times $t_2 = s'_i$. Consequently, the probability that the process stops with a failure at some $R < N^{1/2}/\log^4 N$ is 
at most 
\[ \frac{N^{1/2}}{\log^4 N} C N^{-c \log N} =o(1). \]
On the other hand, by Proposition \ref{prop:hit-0}, the probability that the process continues 
for $N^{1/2}/\log^4 N$ steps without a success is at most 
\[ (1 - c N^{-1/2 + \eta_1})^{N^{1/2}/\log^4 N} =o(1). \]
On the event of a success, {we have that $(X,Y)$ hits $\{(x,0), x\in\mathbb{N}\}$ at a point $(x_0,0)$ satisfying
$x_0 \ge \rho_c N + (a-3\eps) \sqrt{N \log N}$,} so the theorem is proved.
\end{proof}

%\subsection{Proof of Theorem \ref{thm:scaling-upper}}
%\label{ssec:iterate-2}

\begin{proof}[{Proof of Theorem \ref{thm:scaling-upper} assuming Lemmas \ref{lem:exit-big}(ii), \ref{lem:exit-big-2} and Proposition \ref{prop:hit-0}(ii).}]
{We may assume without loss of generality that $\eps < 1$ and that $a + 1 < A$.}
{Let 
$$
\bar{s}_0 := \inf \{ t \in \mathbb{N}_0 \, : \, X_t < \rho_c N + {(2A+1)} \sqrt{N \log N}  \}.
$$
If $\bar{s}_0 = \infty$, then almost surely the process has 
stabilized with more than $\rho_c N + {A} \sqrt{N \log N}$ particles
and the absorbing configuration does not contribute to the probability to be bounded.  
Assume now that $\bar{s}_0 < \infty$.}
It follows from Proposition \ref{prop:exit-} and the choice of $A$ in the proof of 
Theorem \ref{thm:main} that with probability tending to $1$, we have
\eqnst
{ S_{\bar{s}_0} 
\ge -(1+\lambda) A \sqrt{N \log N}. }
In the remainder of the proof we assume that this event occurs. Consequently,
\eqn{e:Y-true}
{ Y_{\bar{s}_0} 
= S_{\bar{s}_0} + (1+\lambda) \hat{X}_{\bar{s}_0} \sqrt{N \log N}
\ge (1+\lambda) A \sqrt{N \log N}
> (1+\lambda) (a+\eps) \sqrt{N \log N},}
where
\begin{equation}\label{eq:hatnotation}
\hat{X}_t = \frac{X_t - \rho_c N}{\sqrt{N \log N}}, t\in\mathbb{N}.
\end{equation}
{Let us define the stopping time
\begin{align*}
\bar{\sigma}_1 := \inf\{  t \geq 0 \, : \, S_{t} \leq 4 \lambda + 2  \}.
\end{align*}}%
We deduce from \eqref{e:Y-true} that taking
\eqnst{
\bar{t_2} 
= \bar{s}_0'
:= \bar{s_0} + \bar{\sigma}_1(\bar{s}_0) }
the conditions of Proposition \ref{prop:hit-0}(ii) are satisfied.

We now inductively define a sequence of stopping times 
$\bar{s}'_0, \dots, \bar{s}'_R$, with a random 
$0 \le \bar{R} \le N^{1/2} \log^3 N$, and a notion of `success'/`failure', as follows
(see Figure \ref{fig:bars'_i} for an illustration of the scheme).
\begin{figure}
    \centering
    \setlength{\unitlength}{0.75cm}
    \begin{picture}(18.5,8)
        \put(0,2){\framebox(1.6,1.2){\parbox{1.2cm}{time $\bar{s}'_i$ \\ $S \approx 0$}}}
        \put(0.8,3.4){\vector(1,1){0.75}}
        \put(2,0){\framebox(4.1,0.75){\parbox{3cm}{$\sigma_2 \wedge \tau > N \log^3 N$}}} %\\
        %or $\sigma_2 \wedge \tau < N \log^{-2} N$}}}
        \put(6,0.25){$\longrightarrow$}
        \put(7,0.25){\parbox{3cm}{stop with $R=i$ \\ \textbf{failure}}}
        \put(0.8,1.8){\vector(1,-1){0.75}}
        \put(2,4){\framebox(4,0.75){\parbox{3cm}{$\sigma_2 \wedge \tau \le N \log^3 N$}}}
        \put(4.8,5.45){\vector(1,1){1}}
        \put(6,6.5){\framebox(1.6,1.2){\parbox{1.2cm}{$\tau < \sigma_2$ \\ $Y = 0$}}}
        \put(7.8,7){$\longrightarrow$}
        \put(9,7){\parbox{3cm}{stop with $R = i$ \\ \textbf{failure}}}
        \put(4.8,3.8){\vector(1,-1){1}}
        \put(6,2){\framebox(2,1.2){\parbox{1.5cm}{$\tau > \sigma_2$ \\ $S \approx \sqrt{N}$}}}
        \put(8.2,2.5){$\longrightarrow$}
        \put(9,2){\framebox(2.8,1.2){\parbox{2.1cm}{wait until $\bar{\sigma}_1$ \\ $S \approx 0$}}}
        \put(12,2.6){\vector(2,1){1}}
        \put(13.2,3){\framebox(2,0.75){$\hat{X} \le a+\eps$}}
        \put(15.2,3.2){$\longrightarrow$}
        \put(16,3.2){\parbox{3cm}{stop with \\ $R=i+1$ \\ \textbf{success}}}
        \put(12,2.4){\vector(2,-1){1}}
        \put(13.2,1){\framebox(2,0.75){$\hat{X} > a+\eps$}}
        \put(15.2,1){$\longrightarrow$}
        \put(16,1){\parbox{3cm}{reached $\bar{s}'_{i+1}$ \\ $S \approx 0$}}
    \end{picture}
    \caption{The scheme for defining the stopping times $\bar{s}'_i$ in the proof of 
    Theorem \ref{thm:scaling-upper}, with the cases leading to success/failure.
    Apart possibly from the initial index $i = 0$, we have $S \approx 0$ at time $\bar{s}'_i$.}
    \label{fig:bars'_i}
\end{figure}
The induction has been initiated with the definition of $\bar{s}'_0$. Assuming $\bar{s}'_i$ has been 
defined for some $i \ge 0$, we consider whether
$\sigma_2(\bar{s}'_i) \wedge \tau(\bar{s}'_i) > N \log^3 N$
%or $\sigma_2(\bar{s}'_i) \wedge \tau(\bar{s}'_i) < N \log^{-2} N$ occurs. If either of the
occurs. If it does, we set $R = i$, and declare that failure occurred. 
Furthermore, on the event $\tau(\bar{s}'_i) < \sigma_2(\bar{s}'_i)$ we set $R = i$ and say that failure occurred. On the event $\tau(\bar{s}'_i) > \sigma_2(\bar{s}'_i)$,
we define $\bar{s}'_{i+1} := \bar{s}'_i  + \sigma_2(\bar{s}'_i) + \bar{\sigma}_1(\sigma_2(\bar{s}'_i))$. If $\hat{X}_{\bar{s}'_{i+1}}$ reaches a value smaller than $a+\eps$,
we stop the process with $R = i+1$ and declare that success occurred. Otherwise, the bound of Lemma \ref{lem:exit-big}(ii) can be applied at the 
time $\bar{t}_2 = \bar{s}'_{i+1}$. 

For each $1 \le i \le R$, let us call the index $i$ \textbf{bad}, if 
$\sigma_2(\bar{s}'_i) \wedge \tau(\bar{s}'_i) < b N$, where $b$ is the constant 
from Lemma \ref{lem:exit-big-2}, and let us call it \textbf{good} otherwise. 
By Lemma \ref{lem:exit-big-2} and the strong Markov property, each index is good
independently with probability $\ge 1/2$.

As in the proof of Theorem~\ref{thm:scaling-lower}, we use the weak law of large numbers to deduce that, for any function $f(N) \to \infty$, we have 
%{by Lemma \ref{lemma:exactcomputation} and by our assumptions}  
that during $N f(N)$ steps of the $(X,Y)$ Markov chain 
the $X$-coordinate moves to the left by $(1 \pm o(1)) f(N)$
with probability going to $1$. 
It follows from this that if by index $I = N^{1/2} \log^3 N$ of the
construction neither success 
nor failure occurred, and there have been at least $I/4$ good indices, then 
we would have 
\[ {X_{\bar{s}'_I}} 
\le \rho_c N + {(2 A+1)} \sqrt{N \log N} 
- \frac{1 + o(1)}{N} \frac{N^{1/2} \log^3 N}{4} b N
\le \rho_c N + (a+\eps) \sqrt{N \log N}, \]
a contradiction. By Lemma \ref{lem:exit-big-2}, the probability of there being
less than $I/4$ good indices goes to $0$.
%By Proposition \ref{prop:hit-0}(iii), the probability that the process stops with a failure at $R=0$ is at most $N^{-\eta_3} = o(1)$.
Hence by Proposition \ref{prop:hit-0}(ii) and Lemma \ref{lem:exit-big},
%and \ref{lem:exit-big-2}, 
the probability that the process stops with a failure at some 
$0 \le R < N^{1/2} \log^3 N$ is at most 
\[ N^{1/2} \log^3 N \big[ C N^{-1/2 - \eta_2} + C N^{-c \log N} \big] =o(1). \]

On the event of success, we have that {$(X,Y)$ did not hit $\{(x,0), x\in\mathbb{N}\}$ at a point $(x_0,0)$ with 
$x_0 \ge \rho_c N + (a+\eps) \sqrt{N \log N}$}, so the theorem is proved.
\end{proof}

\subsection{Hitting time and exit time estimates}
\label{ssec:from-above}

In Section \ref{sssec:hit-gen} we prove Lemma \ref{lem:from-above} 
via martingale arguments, and in Section \ref{sssec:exit-big-2} we prove Lemma \ref{lem:exit-big-2}. Sections \ref{sssec:smalljumps} 
and \ref{sssec:aux} provide auxiliary estimates. In Section   \ref{sssec:exit-big}
we prove Lemma \ref{lem:exit-big}.
%Section \ref{sssec:stoch-compare} gives useful stochastic comparisons for the $Y$ process. 

\subsubsection{Martingale argument for positive deviations}
\label{sssec:hit-gen}

\begin{proof}[{Proof of Lemma \ref{lem:from-above}.}]
{Let $\sigma_1$ and $t_1$ be defined as in the statement of the lemma.
% and note that $\sigma_1(t_1) \geq t_1$ almost surely.   
Due to the Strong Markov Property, we may replace $(X_0,Y_0)$ by the value of $(X_{t_1},Y_{t_1})$, 
and replace $\sigma_1(t_1)$ by $\sigma_1$. Note that then necessarily $S_0 > 4 \lambda + 2$.
Define 
\[ U_t 
= \log S_{\sigma_1 \wedge t}, \quad t \ge 0, \]
and note that $U_t$ is well-defined and positive for all $t \ge 0$, because on the event
$S_t > 4 \lambda + 2$ we have $S_{t+1} > 1$.}
%Write $\cG_t = \cF_{t_1+t}$, and write $t' = t_1 + t$ for short.
We show that $U_t$ is a supermartingale, and give an upper bound on its drift.
On the event $\{ \sigma_1 > t \}$ we have
\eqnspl{e:Ut-drift}
{ \E [ U_{t+1} - U_t \,|\, \cF_t ]
&= \E \Big[ \log \Big( \frac{S_{t+1}}{S_{t}} \Big) \,\Big|\, \cF_{t} \Big] \\
&= \E \Big[ \log \Big( 1 + \frac{S_{t+1} - S_{t}}{S_{t}} \Big) \,\Big|\, \cF_{t} \Big] \\
&\le \E \Big[ \frac{S_{t+1} - S_{t}}{S_{t}} \,\Big|\, \cF_{t} \Big] = \frac{1}{S_{t}} \E [ S_{t+1} - S_{t} \,|\, \cF_{t} ]. }
Recalling the result of Lemma \ref{lem:drift-rough} we have 
\eqnsplst
{ \E [ S_{t+1} - S_{t} \,|\, \cF_{t} ] 
\le -\frac{S_{t}}{N} + o(N^{-1}). }
When $S_{t} > 4 \lambda + 2$, and $N$ is sufficiently large, the right hand side of the last expression gives
$
 \E [ S_{t+1} - S_{t} \,|\, \cF_{t} ] 
\le - \frac{1}{2} \frac{S_{t}}{N}. 
$
Substituting this back into \eqref{e:Ut-drift} we get
\eqnsplst
{ \E [ U_{t+1} - U_t \,|\, \cF_t ]
\le - \frac{1}{2N}. }
Define  {for each $t \geq 0$}
\[ V_t 
= U_t + {\frac{\sigma_1 \wedge t}{2N}}, \]
which is a non-negative supermartingale. The Optional Stopping Theorem gives that 
\[ \E [ V_{\sigma_1 \wedge {(4 N \log N)}} ]  
\le \E [ V_{0} ]
= \E [ U_{0} ]
= \E [ \log (S_{0}) ]
\le \log N. \]   
Since $U_t > 0$ for all $t \ge 0$,  the left hand side of the previous expression satisfies
\eqnsplst
{ \E [ V_{ {\sigma_1 \wedge (4 N \log N)}} ]
&= \E [ U_{\sigma_1 \wedge  (4 N \log N)} ] 
+ \frac{1}{2N} \E [ \sigma_1 \wedge (4 N \log N) ] \\
&\ge \frac{1}{2N} \E [ \sigma_1 \wedge {(4 N \log N)} ]. }
We conclude $\E [ \sigma_1 \wedge {(4 N \log N)} ] \le 2 N \log N$, 
and deduce using {Markov's inequality} that 
\[ \P [ \sigma_1 > 4 N \log N ] 
\le \frac{1}{2}. \]
To finish the proof, we iterate this bound $\frac{1}{4} \log^2 N$ times
{using the Strong Markov Property} to get 
\[ \P [ \sigma_1 > N \log^3 N ] 
\le \exp ( - \log 2 (1/4) \log^2 N ) 
= N^{-c \log N}. \]
\end{proof}

\subsubsection{Lower bound on the exit time}
\label{sssec:exit-big-2}

In this section we prove Lemma \ref{lem:exit-big-2}.

\begin{proof}[{Proof of Lemma \ref{lem:exit-big-2}.}]
We replace $(X_{\bar{t_2}},Y_{\bar{t_2}})$ by $(X_0,Y_0)$, 
$\sigma_2(\bar{t_2})$, $\tau(\bar{t_2})$ by $\sigma_2$, $\tau$.
We show that for sufficiently large $N$, with
\[ \tilde{\sigma}
   := \inf \{ t \ge 0 : |S_t| \ge \sqrt{N} \}, \]
the process 
$U_t := S_{t \wedge \tilde{\sigma}}^2 - 4 \lambda (t \wedge \tilde{\sigma})$
is a supermartingale. This indeed implies the claim, since using optional stopping, 
we have
\[ 
  C^2 
  \ge U_0
  \ge \E[ U_{bN} ]
  = \E [ S_{bN \wedge \tilde{\sigma}}^2 ] 
    - 4 \lambda \E [ b N \wedge \tilde{\sigma} ]
  \ge N \P [ \tilde{\sigma} < b N ] - 4 \lambda b N. \]
Rearranging yields
\[ \P [ \tilde{\sigma} < b N ]
   \le \frac{C^2 + 4 \lambda b N}{N}, \]
and choosing $b = b(\lambda) > 0$ sufficiently small gives the claim, since
$\{ \sigma_2 \wedge \tau < b N \} \subset \{ \tilde{\sigma} < b N \}$.

For the supermartingale property, using Lemmas \ref{lem:drift-rough} and 
\ref{lem:var-rough}, on the event $\{ |S_t| < \sqrt{N} \}$ 
we compute
\eqnsplst{
   &\E [ S_{t+1}^2 - S_t^2 \,|\, \cF_t ]
   = \E [ (S_{t+1} - S_t)^2 + 2 S_t (S_{t+1} - S_t) \,|\, \cF_t ] \\
   &\quad = \Var[S_{t+1} - S_t \,|\, \cF_t ] + \left( \E [ S_{t+1} - S_t \,|\, \cF_t ] \right)^2 + 2 S_t \E [ S_{t+1} - S_t \,|\, \cF_t ] \\
   &\quad = 2 \lambda + o(1) + \left( \frac{S_t}{N} (1+o(1)) + O(N^{-1}) \right)^2
      - 2 S_t \frac{S_t}{N} (1+o(1)) + O(|S_t|/N) \\
   &\quad \le 2 \lambda + o(1)
   \le 4 \lambda. }
This completes the proof.

\end{proof}

\subsubsection{No large jumps}
\label{sssec:smalljumps}
It will be useful to be able to ignore very large upwards jumps of $S_t$, and the 
next simple lemma states the required estimate for this.  We define  
$M_N := \log^2 N.$
\begin{lemma}
\label{lemma:smalljumps}
{Let $t_2$ be a stopping time satisfying the condition %s in Proposition \ref{prop:hit-0}, that is
\[ \rho_c N + (a-2\eps) \sqrt{N \log N}
\le X_{t_2}
\le \rho_c N + {(2 A +1)}\sqrt{N \log N}. \]}%
\label{lem:large-up}
There exists some constant $c=c(\lambda)>0$ such that for any $t\geq t_2$
\[ \P [ \text{$\Delta S_t > M_N$ for some $t_2 \le t \le t_2 + \sigma_2(t_2) \wedge \tau(t_2) \wedge N \log^3 N$}  \,  \bigm |   \, {\mathcal{F}_{t_2}} ]
\le N^{-c \log N}. \]
\end{lemma}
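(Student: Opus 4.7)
My plan is to reduce Lemma~\ref{lemma:smalljumps} to a uniform one-step bound via a union bound, apply Lemma~\ref{lemma:exactcomputation} to express this one-step probability as a product of ratios, and then bound each ratio uniformly away from $1$ using the hypothesis on $X_{t_2}$ together with the monotonicity $X_{t+1}\le X_t$.

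First, invoking the strong Markov property at $t_2$, I would assume without loss of generality that $t_2=0$, and then apply the tower property and a union bound to write
\[
\P\bigl[\exists\,0\le t\le \sigma_2\wedge\tau\wedge N\log^3 N:\Delta Y_t>M\bigr]
\le\sum_{t=0}^{N\log^3 N}\E\!\left[\mathbf{1}_{\{t\le\sigma_2\wedge\tau\}}\,\P[\Delta Y_t\ge M+1\mid\mathcal{F}_t]\right].
\]
By Lemma~\ref{lemma:exactcomputation}, the one-step probability equals $\Pi_{M+1}^{(0)}(X_t,Y_t)+\Pi_{M+2}^{(-1)}(X_t,Y_t)$, and since the remaining factors $\tfrac{1}{1+\lambda}\tfrac{N+1}{N}$, $\tfrac{N+1-X_t}{N+2-X_t}$ and $\tfrac{1}{N+2-X_t}$ are each at most $1$, both terms are dominated by
\[
\prod_{i=0}^{M}\frac{X_t-Y_t-i}{N+2-Y_t-i}.
\]

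Next, the key observation is that for fixed $x<b$ both maps $y\mapsto (x-y)/(b-y)$ and $i\mapsto (x-y-i)/(b-y-i)$ are strictly decreasing in their argument (a direct derivative computation, since $x-b<0$), so each factor of the product is dominated by its value at $Y_t=0,i=0$, namely $X_t/(N+2)$. Because $X_t$ is non-increasing in $t$ and the hypothesis gives $X_{t_2}\le\rho_c N+(2A+1)\sqrt{N\log N}$, this yields, for $N$ sufficiently large,
\[
\frac{X_t-Y_t-i}{N+2-Y_t-i}\;\le\;\frac{X_{t_2}}{N+2}\;\le\;\rho_c+o(1)\;\le\;q,
\]
where $q\in(\rho_c,1)$ is a constant depending only on $\lambda$. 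Hence the product above is at most $q^{M+1}=q^{\log^2N+1}$.

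Combining the two displays produces the overall estimate
\[
N\log^3 N\cdot 2\,q^{\log^2 N+1}\;=\;\exp\bigl(O(\log N)-c_0\log^2 N\bigr),\qquad c_0:=-\log q>0,
\]
which is at most $N^{-c\log N}$ for any $c<c_0$ and $N$ sufficiently large. There is no real obstacle here: the only subtlety is ensuring that the uniform ratio bound $q$ stays strictly below $1$, which is exactly why the hypothesis places $X_{t_2}$ in a window of width $O(\sqrt{N\log N})$ around $\rho_c N$; combined with $X_t\le X_{t_2}$, this lets us absorb the lower-order correction into any $q$ strictly larger than $\rho_c$.
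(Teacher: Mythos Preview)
Your proof is correct and follows essentially the same approach as the paper: bound the one-step probability $\Pi^{(0)}_{M+1}+\Pi^{(-1)}_{M+2}$ via Lemma~\ref{lemma:exactcomputation}, dominate each ratio in the product by $X_t/(N+2)\le X_{t_2}/(N+2)\le q<1$ using monotonicity in $y$ and $i$ together with $X_t\le X_{t_2}$, and finish with a union bound over the at most $N\log^3 N$ time steps. The only cosmetic difference is that you invoke the strong Markov property and the monotonicity of the ratio explicitly, whereas the paper compresses these into a single chain of inequalities.
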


\begin{proof}
We observe that
\[ S_{t+1} - S_t 
   = Y_{t+1} - Y_t - (1+\lambda) (X_{t+1} - X_t)
   \le Y_{t+1} - Y_t + (1+\lambda)
   \le Y_{t+1} - Y_t + \frac{1}{2} M_N, \]
for all large enough $N$. Thus $\Delta S_t > M_N$ implies $\Delta Y_t > \frac{1}{2} M_N$. 
From the formulas for the transition probabilities {in Lemma \ref{lemma:exactcomputation}} we obtain 
\begin{align*}
&\P [ Y_{t+1} - Y_t > \lfloor M_N/2 \rfloor \,|\, \cF_t ] 
= \Pi^{(0)}_{\lfloor M_N/2 \rfloor +1} + \Pi^{{(-1)}}_{\lfloor M_N/2 \rfloor +2} 
\le \Pi^{(0)}_{\lfloor M_N/2 \rfloor +1} + \Pi^{{(-1)}}_{\lfloor M_N/2 \rfloor +1} \\
&\quad = \frac{1}{1+\lambda} \frac{N+1}{N} \prod_{i=0}^{\lfloor M_N/2 \rfloor}
\frac{X_t - Y_t - i}{N+2 - Y_t - i} \Big[ \frac{1}{N+2 - X_t} 
+ \frac{N+1 - X_t}{N+2 - X_t} \Big] \\
&\quad \le \frac{1}{1+\lambda} \frac{N+1}{N} \Big( \frac{X_t}{N+2} \Big)^{M_N/2} 
\le \frac{1}{1+\lambda} \frac{N+1}{N} \Big( \frac{X_{t_2}}{N+2} \Big)^{M_N/2} \\
&\quad \le C(\lambda) e^{-c(\lambda) M_N} = C(\lambda) N^{-c(\lambda) \log N}. 
\end{align*}
The statement now follows {by a union bound.}
\end{proof}

\subsubsection{An auxiliary estimate}
\label{sssec:aux}

{ In analogy to \eqref{eq:hatnotation}, we set}
\eqn{e:Xhat-def}
{ \hat{x}=\hat{x}_N
= \frac{x - \rho_c N}{\sqrt{N \log N}},\quad x\in\mathbb{N},}
and further set $N' = N/\log^2 N$ and $x_* = x - \log^2 N$.

\begin{lemma}
\label{lem:X-diff}
The exists a finite constant $C$, such that for any $t,N\in\mathbb{N}$
\eqnst
{ \P [ X_{t+N'} < x_* \,|\, X_t = x,\, Y_t = y ] \le C N^{- \log N}. }
\end{lemma}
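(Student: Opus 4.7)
The plan is to reduce the event $\{X_{t+N'} < x_*\}$ to a binomial tail estimate, exploiting the fact that $X$ can decrease by at most one unit per step. Since $\Delta X_s \in \{-1,0\}$ for all $s$, the event $\{X_{t+N'} < x - \log^2 N\}$ forces more than $\log^2 N$ of the $N'$ steps in $[t, t+N')$ to be decrease steps.

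By Lemma~\ref{lemma:exactcomputation}, at each step $s \ge t$,
\eqnst{
\P[\Delta X_s = -1 \mid \cF_s] = \Pi_0^{(-1)}(X_s, Y_s) = \frac{1}{1+\lambda}\cdot\frac{N+1}{N}\cdot\frac{1}{N+2-X_s}.
}
Since $X$ is non-increasing we have $X_s \leq x$, and in the applications of this lemma $x$ lies in the scaling window, so that $N+2-x = \Theta(N)$ and the above probability is bounded above by $p := C_1/N$ for some $C_1 = C_1(\lambda) < \infty$. The number of decrease steps over $[t, t+N')$ is therefore stochastically dominated by $Z \sim \mathrm{Binomial}(N', p)$, whose mean is $N' p = C_1/\log^2 N$.

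The standard Chernoff upper tail bound $\P[Z \geq k] \leq (enp/k)^k$, applied with $n = N'$ and $k = \log^2 N$, gives
\eqnst{
\P[Z \geq \log^2 N] \leq \left(\frac{e C_1}{\log^4 N}\right)^{\log^2 N} = \exp\bigl(\log^2 N \cdot (\log(e C_1) - 4 \log\log N)\bigr),
}
which for $N$ large enough is bounded by $\exp(-\log^2 N) = N^{-\log N}$, yielding the claim. The argument presents no real obstacle; the only mildly delicate point is that the one-step decrease probability depends on $X_s$, and the monotonicity of $X$ lets us replace $X_s$ by its initial value $x$ uniformly over the time interval.
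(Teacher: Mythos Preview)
Your proof is correct and follows essentially the same approach as the paper: bound the one-step decrease probability uniformly by $O(1/N)$ using Lemma~\ref{lemma:exactcomputation} and the assumption that $x$ lies in the scaling window, dominate the number of decrease steps by a binomial, and apply a Chernoff bound. The only cosmetic difference is that the paper uses the raw exponential-moment form of Chernoff with $\theta=1$, while you use the standard multiplicative form $(enp/k)^k$; both give the required $N^{-\log N}$ decay.
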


\begin{proof}
For $x - \log^2 N \le x' \le x$ we have
\eqnsplst
{ \Pi^{(-1)}_0(x',y)
&= \frac{1}{1+\lambda} \frac{N+1}{N} \frac{1}{N+2 - x'} 
= \frac{1}{N} (1+o(1)), }
where we used that 
\eqnst
{ N+2 - x'
= N - \frac{\lambda}{1+\lambda} N - \hat{x} \sqrt{N \log N} - O(\log^2 N) + 2
= \frac{1}{1+\lambda} N (1-o(1)). }
Thus the probability in the statement is bounded by the probability that a binomial random variable $B$ with $N'$ trials and probability of success $(1+o(1))/N$ exceeds $\log^2 N$.
The latter probability can be bounded
\eqnsplst
{ \P [ B > \log^2 N ]
&= \P [ e^{\theta B} > e^{\theta \log^2 N} ] \\
&\le N^{-\theta \log N} \Big( 1 + \frac{(e^\theta - 1)(1+o(1))}{N} \Big)^{N/\log^2 N} \\
&\le N^{-\theta \log N} \exp \Big( \frac{(e^\theta - 1)(1+o(1))}{\log^2 N} \Big) }
for any $\theta > 0$. Choosing $\theta = 1$ completes the proof.
\end{proof}

\subsubsection{Upper bound on the exit time}
\label{sssec:exit-big}

In this section we present the proof of  Lemma \ref{lem:exit-big}. 
As we outlined in Section \ref{ssec:outline},  we  will make use of some
standard functions associated with the diffusion limit of $S_t/\sqrt{N}$, and
we define these first, before starting the proof. Our intuition is that 
in the limit $N \to \infty$, the rescaled process $S_t/\sqrt{N}$ converges to 
the solution of the stochastic differential equation
\[ d \xi_s
   = - \xi_s \, ds + \sqrt{2 \lambda} \, dB_s, \]
where $B$ is a standard Brownian motion. (Background on SDEs can be 
found, for example, in \cite[Chapter 6]{Durrett}. Note, however, that neither
the discussion nor our proof will require familiarity with SDEs.) We define 
the function 
\begin{equation}\label{eq:varphifunction}
\varphi(x)
   = \int_0^x \exp \left( \frac{y^2}{2 \lambda} \right)\,\textup{d} y, \quad x\in \mathbb{R}, 
   \end{equation}
(called the natural scale of $\xi$). 
This function has the property that $\varphi(\xi_s)$ is a 
local martingale, and hence is a time-change of Brownian motion 
(see \cite[Section 6.1]{Durrett}). Therefore, our strategy will be to show 
that $\varphi(S_t/\sqrt{N})$ is an approximate martingale, that we can use, via
the optional stopping theorem, to prove Lemma \ref{lem:exit-big}
and  later 
Proposition \ref{prop:hit-0}.

We let 
\[ \alpha'_N := - (1+\lambda) b \sqrt{\log N}  \qquad\qquad \text{and} \qquad\qquad
   \beta'_N := 1, \]
and let 
\[ \alpha_N := \alpha'_N - 1 \qquad\qquad \text{and} \qquad\qquad
   \beta_N := \beta'_N+1. \]
(where the parameter $b$ will be chosen depending on the value of $X_{t_2}$).
There is a function $v_N$ continuous on $[\alpha_N,\beta_N]$ that is in
$C^2(\alpha_N,\beta_N)$, satisfying
\eqn{e:L-eqn}
{  L v_N(x)
   := - x v_N'(x) + \lambda v_N''(x)
   \equiv -1 \qquad\qquad \text{and} \qquad\qquad
   v_N(\alpha_N) = 0 = v_N(\beta_N). }
We extend $v_N$ to all of $\mathbb{R}$ by setting it equal to $0$ outside 
$[\alpha_N,\beta_N]$. (Here $L$ is the generator of $\xi$, and $v_N(x)$ gives the expected 
time $\xi$ takes to exit $(\alpha_N,\beta_N)$, starting at $x$.) We have 
the explicit formulas:
\eqnsplst{
 v_N(x)
 &= 2 \frac{\varphi(\beta_N) - \varphi(x)}{\varphi(\beta_N) - \varphi(\alpha_N)}
    \int_{\alpha_N}^x (\varphi(z) - \varphi(\alpha_N)) \frac{1}{2 \lambda} 
      e^{-\frac{z^2}{2 \lambda}} \, dz \\
 &\quad + 2 \frac{\varphi(x) - \varphi(\alpha_N)}{\varphi(\beta_N) - \varphi(\alpha_N)}
    \int_x^{\beta_N} (\varphi(\beta_N) - \varphi(z)) \frac{1}{2 \lambda} 
      e^{-\frac{z^2}{2 \lambda}} \, dz, }
and
\eqnsplst{ 
 v_N'(x)
 = \frac{\frac{1}{\lambda} e^{\frac{x^2}{2 \lambda}}}
   {\varphi(\beta_N) - \varphi(\alpha_N)} \int_{\alpha_N}^{\beta_N}
   e^{\frac{y^2}{2 \lambda}} \int_x^y e^{-\frac{z^2}{2 \lambda}} \, dz \, dy. }
and it is straightforward to verify that $v_N$ satisfies \eqref{e:L-eqn}.

We will need the following simple facts about $\varphi$ and $v_N$:
\begin{itemize}
    \item[(i)] $\varphi(-x) = - \varphi(x) 
      \sim - \frac{\lambda}{x} e^{\frac{x^2}{2 \lambda}}$, as $x \to \infty$.
    \item[(ii)] For all large enough $N$, the function $|v'_N(x)|$ is 
      maximized at $x = \alpha_N$, with 
      $v'_N(\alpha_N) \sim \frac{1}{\lambda} |\alpha_N| \log |\alpha_N|$. 
    \item[(iii)] We have 
      \eqnspl{e:v-deriv-bounds}{
      \sup_{x \in [\alpha_N,\beta_N]} |v_N'(x)| 
      &= O(\sqrt{\log N} \, \log \log N), \\
      \sup_{x \in [\alpha_N,\beta_N]} |v_N''(x)| 
      &= O(\log N \, \log \log N), \\
      \sup_{x \in [\alpha_N,\beta_N]} |v_N'''(x)| 
      &= O((\log N)^{3/2} \log \log N). }
\end{itemize}
The asymptotics in (i) can be verified using l'Hospital's rule. The asymptotics in 
(ii) can be derived from (i). The statement in (ii) about the maximum of 
$v_N'$ can be seen as follows. If $x_0 \in (\alpha_N,\beta_N)$ is such that 
$v_N''(x_0) = 0$, then \eqref{e:L-eqn} implies $x_0 v_N'(x_0) = 1$, and the 
formula for $v_N'$ implies that $x_0$ is bounded away from $0$. It is 
straightforward to check that $v_N'(\beta_N) = O(1)$, and this leaves 
$\alpha_N$ as the maximum point. Finally, (iii) follows from (ii) and 
\eqref{e:L-eqn}. 

A key role in the proof of Lemma \ref{lem:exit-big} will be played by the 
following computation. Let 
\[ V_t 
   = v_N(S_t/\sqrt{N}), \quad t \ge 0. \]

\begin{lemma}
\label{lem:v-super}
We have
\[ \E [ V_{t+1} - V_t \,|\, \cF_t ]
   = -\frac{1}{N} (1 + o(1)), \quad t \ge 0, \]
on the event when $S_t/\sqrt{N} \in [\alpha'_N,\beta'_N]$.
\end{lemma}

\begin{proof}
We estimate separately on the events $A := \{ S_{t+1} - S_t \le M_N \}$
and $B := \{ S_{t+1} - S_t > M_N \}$ (recall that $M_N = \log^2 N$). From 
Lemma \ref{lemma:smalljumps}(ii), we have that $\P [ B \,|\, \cF_t ] \le N^{-c \log N}$, 
and on this event we bound 
\[ |V_{t+1} - V_t| 
   \le 2 \sup_{x \in [\alpha_N,\beta_N]} v_N(x)
   \le C (\beta_N - \alpha_N) \sup_{x \in [\alpha_N,\beta_N]} |v'_N(x)|
   \le O( (\log N)^{3/2} ). \]
This yields
\[ \E [ (V_{t+1} - V_t) \mathbf{1}_{B} \,|\, \cF_t ] 
   = o(N^{-1}). \]

On the event $A$, we have $S_t/\sqrt{N},\, S_{t+1}/\sqrt{N} \in [\alpha_N,\beta_N]$.
Writing $s_t = S_t/\sqrt{N}$ for short, a Taylor expansion gives:
\eqnspl{e:Taylor}{ 
  V_{t+1} - V_t
  &= v_N'(s_t) \frac{S_{t+1} - S_t}{N^{1/2}}
     + \frac{1}{2} v_N''(s_t) \frac{(S_{t+1} - S_t)^2}{N}
     + \frac{1}{6} v_N'''(\zeta) \frac{(S_{t+1} - S_t)^3}{N^{3/2}}, }
where $\zeta$ is a number between $S_t/\sqrt{N}$ and $S_{t+1}/\sqrt{N}$.

We bound the absolute value of the third term in the 
right hand side of \eqref{e:Taylor} by $o(N^{-1})$. 
Taking conditional expectation, and using Lemmas \ref{lem:drift-rough}, we get
\eqnspl{e:V-Ito1}
{ \frac{v_N'(s_t)}{N^{1/2}} \E [ (S_{t+1} - S_t) \mathbf{1}_A \,|\, \cF_t ]
  &= \frac{v_N'(s_t)}{N^{1/2}} \Big[ \E [ S_{t+1} - S_t \,|\, \cF_t ] 
     - \E [ (S_{t+1} - S_t) \mathbf{1}_B \,|\, \cF_t ] \Big] \\
  &= \frac{v_N'(s_t)}{N^{1/2}} \left[ - \frac{S_t}{N} 
       + O \left( \frac{\sqrt{\log N}}{N^{3/2}} \right) 
     - \E [ (S_{t+1} - S_t) \mathbf{1}_B \,|\, \cF_t ] \right] \\
  &= - \frac{s_t v_N'(s_t)}{N} + o(N^{-1}). }
Similarly, using Lemma \ref{lem:var-rough}, for the second term in the right hand side of \eqref{e:Taylor} we have
\eqnspl{e:V-Ito2}
{ \frac{v_N''(s_t)}{2 N} \E [ (S_{t+1} - S_t)^2 \mathbf{1}_A \,|\, \cF_t ]
  &= \frac{v_N''(s_t)}{2 N} \Big[ \E [ (S_{t+1} - S_t)^2 \,|\, \cF_t ] 
     - \E [ (S_{t+1} - S_t)^2 \mathbf{1}_B \,|\, \cF_t ] \Big] \\
  &= \frac{v_N''(s_t)}{2 N} \left[ 2 \lambda + O \left( \frac{\sqrt{\log N}}{N^{3/2}} \right) 
     - \E [ (S_{t+1} - S_t)^2 \mathbf{1}_B \,|\, \cF_t ] \right] \\
  &= \frac{\lambda v_N''(s_t)}{N} + o(N^{-1}). }
Using \eqref{e:L-eqn}, the sum of the main terms can be written
\eqnsplst{
   \frac{1}{N} \left[ - s_t v_N'(s_t) + \lambda v_N''(s_t) \right] 
   = - \frac{1}{N}. }
This proves the claim.
\end{proof}

\begin{proof}[{Proof of Lemma \ref{lem:exit-big}.}]
(i) %We first prove part (i), but only assuming that 
%$X_{t_2} = \rho_c N + \Theta(\sqrt{N \log N})$, which will make modifications 
%for proving part (ii) easy.
Recall the definitions of $t_2$, $\sigma_2$ and $\tau$ in the statement of the lemma. By the Strong Markov Property, we may replace $(X_0,Y_0)$ by the value of $(X_{t_2},Y_{t_2})$, and replace $\sigma_2(t_2)$ and $\tau(t_2)$ in the statement by $\sigma_2$ and $\tau$, respectively.

We first show that the event in the left hand side of \eqref{e:exit-big} implies 
the event
\[ \big\{ \sigma_2 \wedge \tau' > N \log^3 N \big\}, \]
where 
\[ \tau'
   := \inf \left\{ t \ge 0 : S_t \le - (1+\lambda)(a-\eps) \sqrt{N \log N} \right\}. \]
Indeed, if for all $0 \le t \le N \log^3 N$ we have $S_t < N^{1/2}$ and $Y_t > 0$, then 
for all such $t$ we also have
\eqnsplst{
   S_t 
   &= Y_t - (1+\lambda) X_t + \lambda N
   > -(1+\lambda) X_0 + \lambda N \\
   &> -(1+\lambda) \left[ \frac{\lambda}{1+\lambda} N 
     - (a-\eps) \sqrt{N \log N} \right] + \lambda N \\
   &= - (1+\lambda) (a-\eps) \sqrt{N \log N}. }
Take $b = a-\eps$ in the definition of $\alpha'_N$, so that
$\alpha'_N = -(1+\lambda) (a-\eps) \sqrt{\log N}$ in Lemma \ref{lem:v-super}. 
Hence we may replace $\tau$ by $\tau'$ in the inequality to be shown.

Observe that $S_0 / \sqrt{N} \in [\alpha'_N,\beta'_N]$. 
We define a process $U$ by setting, for each $t \geq 0$,  
\[ U_t
   := v_N(S_{t}/\sqrt{N}) + \frac{t}{2N}. \]
%Writing $\cG_t = \cF_{t_2+t}$, and 
It follows from Lemma \ref{lem:v-super} that $U_{\sigma_2 \wedge \tau' \wedge t}$ 
is a supermartingale. Using optional stopping, we have
\eqnsplst{ 
   C_1 (\log N)^{3/2}
   &\ge U_0
   \ge \E [ U_{\sigma_2 \wedge \tau' \wedge t} ] \\
   &= \E [ V_{\sigma_2 \wedge \tau' \wedge t} ]
     + \frac{1}{2 N} \E [ \sigma_2 \wedge \tau' \wedge t ]
   \ge \frac{1}{2 N} \E [ \sigma_2 \wedge \tau' \wedge t ]. }
Letting $t \to \infty$, monotone convergence gives
\[ \E [ \sigma_2 \wedge \tau' ]
   \le 2 C_1 N \log^{3/2} N. \]
By Markov's inequality, we therefore have
\[ \P [ \sigma_2 \wedge \tau' > 4 C_1 N \log^{3/2} N ] 
   \le \frac{1}{2}. \]
Iterating this bound using the strong Markov property yields
\[ \P [ \sigma_2 \wedge \tau' > N \log^3 N ]
   \le (1/2)^{\lfloor \frac{1}{4 C_1} \log^{3/2} N \rfloor} 
   \le N^{-c \log^{1/2} N}. \] 

(ii) This can be proved in much the same way as part (i), by taking $b = (2A+1)$.
\end{proof}

\subsubsection{The key approximate martingale}

Recall the definition of $\varphi(x)$,
which is given in  (\ref{eq:varphifunction}).
It is straightforward to compute that 
\eqn{e:varphi-deriv}
{  \varphi'(x)
   = \exp \left( \frac{x^2}{2 \lambda} \right) \qquad \text{and} \qquad
   \varphi''(x) 
   = 2 x \varphi'(x) \frac{1}{2 \lambda}. }
This implies that $L \varphi \equiv 0$, where 
\[ L f (x) 
   := - x f'(x) + \lambda f''(x); \]
(that is, $L$ is the generator of $\xi$.) For technical reasons, it will be more
convenient to work with the following variant:
\[ \tilde{\varphi}(x)
   := \begin{cases}
      \varphi(2) - \varphi(x) & \text{$x \le 2$;} \\
      0                       & \text{$x > 2$.}
   \end{cases} \]
It is straightforward to check that $\tilde{\varphi}$ is non-negative, continuous, and
bounded away from $0$ on $(-\infty,1)$.

The martingale property we need is provided by the following lemma. Recall the assumptions 
of Lemma \ref{lem:exit-big}(i). 
%write $S_t(t_2) := S_{t_2+t}$, $t \ge 0$.
Let us define the stopping time:
\[ \zeta
   := \inf \{ t \ge 0 : X_t - X_0 < - \log^3 N \}. \]

\begin{lemma}
\label{lem:varphi-mart}
Under the assumptions of Lemma \ref{lem:exit-big}(i), we have that there exists an
$(\cF_{t_2+t})_{t \ge 0}$-adapted process $e(t)$, such that 
\[ \E \Bigg[ \tilde{\varphi} \left( \frac{S_{t_2+t+1}}{\sqrt{N}} \right) 
      \,\Bigg|\, \cF_{t_2+t} \Bigg] 
      = \exp(e(t)) \tilde{\varphi} \, \left( \frac{S_{t_2+t}}{\sqrt{N}} \right). \]
Moreover, we have
\eqn{e:et-bound}
{  |e(t)|
   = O \left( \frac{\sqrt{\log N}}{N^{3/2}} \right) }
for all $0 \le t \le \sigma_2(t_2) \wedge \tau(t_2) \wedge \zeta(t_2)$.
Consequently, the process
\[ U_s
   := \tilde{\varphi}\left( \frac{S_{t_2+s}}{\sqrt{N}} \right)
      \exp \left( - \sum_{t=0}^{s-1} e(t) \right) \]
stopped at $\sigma_2(t_2) \wedge \tau(t_2) \wedge \zeta(t_2)$, is a martingale.
\end{lemma}

\begin{proof}
By the strong Markov property, we may replace $X_{t_2}$ by $X_0$, $S_{t_2+t}$ by 
$S_t$, etc. Put 
\[ V_t
   = \tilde{\varphi} \left( \frac{S_t}{\sqrt{N}} \right). \]    
We estimate $V_{t+1} - V_t$ separately on the events
\[ A 
   := \{ S_{t+1} - S_t > M_N \} \qquad \text{and} \qquad 
   B
   := \{ S_{t+1} - S_t \le M_N \}. \]
On the event $A$, we have $0 \le V_{t+1} < V_t$, and hence 
\[ \E \Bigg[ \left| \frac{V_{t+1}}{V_t} - 1 \right| \mathbf{1}_A \,\Bigg|\, \cF_t \Bigg] 
   \le \P [ A \,|\, \cF_t ]
   \le N^{-c \log N}. \]
On the event $B$, using the shorthand $x_0 = S_t/\sqrt{N}$, we have the Taylor expansion
\eqn{e:V_t-Taylor}
{  V_{t+1} - V_t
   = -\varphi' \left( x_0 \right) \frac{S_{t+1} - S_t}{\sqrt{N}}
     - \frac{1}{2} \varphi''(x_0) \frac{(S_{t+1} - S_t)^2}{N} 
     - \frac{1}{6} \varphi'''(z) \frac{(S_{t+1} - S_t)^3}{N^{3/2}}, }
where $z$ is value between $S_t/\sqrt{N}$ and $S_{t+1}/\sqrt{N}$.
We divide each term by $V_t = \tilde{\varphi}(x_0)$, to get
\eqn{e:V_t-Taylor-div}
{ \frac{V_{t+1}}{V_t} - 1
   = -\frac{\varphi' \left( x_0 \right)}{\tilde{\varphi}(x_0)} 
     \frac{S_{t+1} - S_t}{\sqrt{N}}
     - \frac{1}{2} \frac{\varphi''(x_0)}{\tilde{\varphi}(x_0)} 
     \frac{(S_{t+1} - S_t)^2}{N} 
     - \frac{1}{6} \frac{\varphi'''(z)}{\tilde{\varphi}(x_0)} 
     \frac{(S_{t+1} - S_t)^3}{N^{3/2}}. }
Now take conditional expectation with respect to $\cF_t$. 

The first term thus arising from \eqref{e:V_t-Taylor-div} we write as
\eqnspl{e:V_t-diff-1st}
{ & - \frac{\varphi'(x_0)}{\tilde{\varphi}(x_0)} \frac{1}{\sqrt{N}}
          \Big\{ \E [ S_{t+1} - S_t \,|\, \cF_t ] 
          - \E [ (S_{t+1} - S_t) \mathbf{1}_A \,|\, \cF_t ] \Big\}. }
For the second term in curly braces we use that 
\eqn{e:S_t-diff-triv}
{  |S_{t+1}-S_t| 
   = O(N) \qquad \text{ and } \qquad
   \P [ A \,|\, \cF_t ] \le N^{-c \log N}, }
to bound this term above by $N^{-1}$. Thus, using Lemma \ref{lem:drift-rough}, 
the expression in \eqref{e:V_t-diff-1st} equals
\eqnspl{e:V_t-diff-1st-2}
{  & - \frac{\varphi'(x_0)}{\tilde{\varphi}(x_0)} \frac{1}{\sqrt{N}}
          \left\{ - \frac{S_t}{N} \left( 1 + O \left( \frac{\sqrt{\log N}}{\sqrt{N}} \right) 
          \right) + O(N^{-1}) \right\} \\
   &\quad = \frac{x_0 \varphi'(x_0)}{N \tilde{\varphi}(x_0)}
      + \frac{\varphi'(x_0)}{\tilde{\varphi}(x_0)} O \left( \frac{\log N}{N^{3/2}} \right). }
In obtaining the second term, we used that $S_t/\sqrt{N} = O(\sqrt{\log N})$.

Similarly, the second term arising from \eqref{e:V_t-Taylor-div} we write as
\eqnspl{e:V_t-diff-2nd}
{ & - \frac{1}{2} \frac{\varphi''(x_0)}{\tilde{\varphi}(x_0)} \frac{1}{N}
        \Big\{ \E [ (S_{t+1} - S_t)^2 \,|\, \cF_t ] 
          - \E [ (S_{t+1} - S_t)^2 \mathbf{1}_A \,|\, \cF_t ] \Big\}. }
Again, for the second term in curly braces, we use \eqref{e:S_t-diff-triv} to bound 
that term above by $O(N^{-1/2})$. Thus, using Lemma \ref{lem:var-rough}, the 
expression in \eqref{e:V_t-diff-2nd} equals
\eqnspl{e:V_t-diff-2nd-2}
{ & - \frac{1}{2} \varphi''(x_0) \frac{1}{N \tilde{\varphi}(x_0)}
    \left[ 2 \lambda + O\left(\sqrt{\frac{\log N}{N}}\right) + O(N^{-1/2}) \right] \\
 &\quad = \frac{\varphi''(x_0) 2\lambda}{N \tilde{\varphi}(x_0)}
    + \frac{\varphi''(x_0)}{\tilde{\varphi}(x_0)} O \left( \frac{\log N}{N^{3/2}} \right). }

Finally, for the third term arising from \eqref{e:V_t-Taylor-div} we simply use that 
$|S_{t+1} - S_t| \mathbf{1}_B \le |S_{t+1} - S_t|$, to bound this term above by
\[ \sup_{z \in [-b(1+\lambda)\sqrt{\log N},2]} |\varphi'''(z)| \,
   O \left( N^{-3/2} \right). \]

The leading terms in \eqref{e:V_t-diff-1st-2} and \eqref{e:V_t-diff-2nd-2} cancel, 
due to \eqref{e:varphi-deriv}. Thus from \eqref{e:V_t-Taylor-div} we have 
\eqnspl{e:V_t-diff-red}
{ \Bigg| \E \bigg[ \frac{V_{t+1}}{V_t} - 1 \,\Bigg|\, \cF_t \bigg] \Bigg|
  &\le \frac{|\varphi'(x_0)|}{\tilde{\varphi}(x_0)} \, 
      O \left( \frac{\log N}{N^{3/2}} \right)
      + \frac{|\varphi''(x_0)|}{\tilde{\varphi}(x_0)} \, 
      O \left( \frac{\log N}{N^{3/2}} \right) \\
  &\quad\qquad + \frac{\sup_{z} |\varphi'''(z)|}{\tilde{\varphi}(x_0)} \,
      O \left( \frac{1}{N^{3/2}} \right). } 
Using the bounds \eqref{e:varphi-deriv} we get that the right hand side is 
$O ( N^{-3/2} \log^3 N )$, and this implies the claim about $e(t)$. The consequence
that $U_s$ is a martingale is immediate.
\end{proof}

\subsection{Estimates on the absorption time}
\label{ssec:hit-0}

In this section we prove Proposition \ref{prop:hit-0}(i)--(ii), using the martingale
defined in Lemma \ref{lem:varphi-mart}.

\begin{proof}[{Proof of Proposition \ref{prop:hit-0}(i).}]
By the strong Markov property, we may replace $t_2$ by time $0$, and assume that 
\[ \rho_c N + (a-2\eps) \sqrt{N \log N}
   \le X_0 \le \rho_c N + (a-\eps) \sqrt{N \log N}, \]
and $S_0 \le 4 \lambda + 2$. Let 
\[ \alpha_N 
   := -(1+\lambda) (a-\eps) \sqrt{\log N}, \] 
and let 
\[ \tau'
   := \inf \Big\{ t \ge 0 : S_t/\sqrt{N} \le \alpha_N \Big\}. \]
Here, in defining $\tau'$, we extend the dynamics of the Markov chain $(X_t,Y_t)$
beyond time $\tau$, by allowing the $Y$ coordinate to go negative, using the same transition 
probabilities as in Lemma \ref{lemma:exactcomputation}. 
We claim that $\{ \tau < \sigma_2 \} \supset \{ \tau' < \sigma_2 \}$. Indeed, if we have
$S_t/\sqrt{N} \le \alpha_N$, we also have
\eqnsplst{
   Y_t 
   &= S_t + (1+\lambda) X_t - \lambda N
   \le \alpha_N \sqrt{N} + (1+\lambda) X_0 - \lambda N \\
   &\le \alpha_N \sqrt{N} + (1+\lambda) 
       \left[ \rho_c N + (a-\eps) \sqrt{N \log N} \right] - \lambda N
   = 0. }
Therefore, it is sufficient to show a bound of the form 
$\P [ \tau' < \sigma_2 ] \ge c N^{-1/2+\eta_1}$.

By the optional stopping theorem, we have
\eqnspl{e:ost-U}{
  \varphi(2) - \varphi(S_0/\sqrt{N})
  &= U_0 
  = \E [ U_{\tau' \wedge \sigma_2 \wedge \zeta \wedge \lfloor N \log^3 N \rfloor} ] \\
  &= \E \left[ \tilde{\varphi} \left( \frac{S_{\tau' \wedge \sigma_2 \wedge \zeta 
    \wedge \lfloor N \log^3 N \rfloor}}{\sqrt{N}} \right) 
    \exp \left( - \sum_{t=0}^{\tau' \wedge \sigma_2 \wedge \zeta 
    \wedge \lfloor N \log^3 N \rfloor - 1} e(t) \right) \right]. }
Here the sum of the terms $e(t)$ is $O(N^{-1/2} \log^{7/2} N)$, due to \eqref{e:et-bound},
and hence the exponential is $1 + o(1)$. Note that the bound on the $o(1)$ term in this 
estimate is deterministic. We show that we can ignore
$\zeta$ inside the expectation. Due to Lemma \ref{lem:X-diff}, we have
$\P [ \zeta < \lfloor N \log^3 N \rfloor ] \le N^{-c \log N}$. On the subset 
$\{ \zeta < \tau' \wedge \sigma_2 \wedge \lfloor N \log^3 N \rfloor \}$ 
of this event, we have that $S_\zeta/\sqrt{N}$ falls in the interval 
$[\alpha_N,1]$, and on this interval, as $N \to \infty$, we have
\eqn{e:phi-sup-bnd}{
   \sup_{x \in [\alpha_N,1]} \tilde{\varphi}(x) 
   = \varphi(2) - \varphi(\alpha_N) 
   \sim \varphi(|\alpha_N|)
   \sim \frac{\lambda}{|\alpha_N|} \exp \left( \frac{\alpha_N^2}{2 \lambda} \right)
   = O (N^{1/2-\eta}), }
for some $\eta = \eta(\lambda,\eps) > 0$.
Thus we have that the right hand side of \eqref{e:ost-U} equals
\eqnspl{e:ost-U2}{
  &o(N^{1/2}) N^{-c \log N}
    + (1+o(1)) \E \left[ \tilde{\varphi} \left( \frac{S_{\tau' \wedge \sigma_2  
    \wedge \lfloor N \log^3 N \rfloor}}{\sqrt{N}} \right) \right] \\
  &\quad = o(1) + (1+o(1)) \Bigg\{ \E \left[ \tilde{\varphi}  
    \left( \frac{S_{\tau'}}{\sqrt{N}} \right)
    \mathbf{1}_{\tau' < \sigma_2,\, \tau' \le \lfloor N \log^3 N \rfloor} \right] \\
  &\quad\qquad + \E \left[ \tilde{\varphi}  \left( \frac{S_{\sigma_2}}{\sqrt{N}} \right)
    \mathbf{1}_{\sigma_2 < \tau',\, \sigma_2 \le \lfloor N \log^3 N \rfloor} \right] \\
  &\quad\qquad + \E \left[ \tilde{\varphi} 
    \left( \frac{S_{\lfloor N \log^3 N \rfloor}}{\sqrt{N}} \right)
    \mathbf{1}_{\lfloor N \log^3 N \rfloor < \tau' \wedge \sigma_2} \right] \Bigg\}. }
For the last term in the right hand side, we use that, due to Lemma \ref{lem:exit-big}(i), 
we have $\P [ \sigma_2 \wedge \tau' > N \log^3 N] \le N^{-c \log N}$, and using 
\eqref{e:phi-sup-bnd} this implies that this term is $o(1)$.
For the term involving $S_{\tau'}$, we use that the downward jumps of $S_t$ are of size
$1$, and therefore, $\alpha_N - N^{-1/2} \le S_{\tau'}/\sqrt{N} \le \alpha_N$. This implies
that this term is at most 
\[ (1+o(1)) \tilde{\varphi}(\alpha_N) \P [ \tau' < \sigma_2,\, \tau' \le N \log^3 N ]
   \le (1+o(1)) \tilde{\varphi}(\alpha_N) \P [ \tau' < \sigma_2 ]. \]
Finally, we need to estimate the term involving $S_{\sigma_2}$. For this, note that 
$S_{\sigma_2}/\sqrt{N} \ge 1$, and therefore 
$\tilde{\varphi}(S_{\sigma_2}) \le \varphi(2) - \varphi(1)$. Thus this term is 
at most 
\[ (\varphi(2) - \varphi(1)) (1 - \P [ \tau' < \sigma_2 ]). \] 
Altogether, from \eqref{e:ost-U}, \eqref{e:ost-U2} and the above estimates, we get that 
\eqnsplst{
   &\varphi(2) - \varphi(S_0/\sqrt{N}) \\
   &\quad \le (\varphi(2) - \varphi(1) ) ( 1 - \P [ \tau' < \sigma_2 ] ) 
       + o(1) + (1 + o(1)) (\varphi(2) - \varphi(\alpha_N)) 
       \P [ \tau' < \sigma_2 ]. }
Canceling the $\varphi(2)$ terms, and rearranging, yields:
\[ \varphi(-S_0/\sqrt{N}) - \varphi(-1) + o(1)
   \le \P [ \tau' < \sigma_2 ] \left( \varphi(|\alpha_N|)(1+o(1)) - \varphi(-1) \right). \]
As $N \to \infty$, the term in parentheses in the right hand side is positive, and hence,
using the bound $S_0 \le 2 \lambda + 4 = O(1)$, we get
\eqnsplst{
   \P [ \tau' < \sigma_2 ] 
   &\ge \frac{\varphi(-S_0/\sqrt{N}) - \varphi(-1) + o(1)}{\varphi(|\alpha_N|)(1+o(1)) - 
       \varphi(-1)}
   \ge \frac{\varphi(0) - \varphi(-1) + o(1)}{\varphi(|\alpha_N|)(1+o(1)) - \varphi(-1)} \\
   &\ge c \varphi(|\alpha_N|)^{-1}
   \ge c N^{-1/2+\eta}. }
\end{proof}

\begin{proof}[{Proof of Proposition \ref{prop:hit-0}(ii).}]
We follow a similar outline as in the proof of part (i). 
By the strong Markov property, we may replace $\bar{t}_2$ by time $0$, and assume that 
\[ \rho_c N + (a+\eps) \sqrt{N \log N}
   \le X_0 \le \rho_c N + (2A+1) \sqrt{N \log N}, \]
and that we have $S_0 \le 4 \lambda + 2$ and $Y_0 \ge (1+\lambda)(a+\eps/2)\sqrt{N \log N}$.
We will write 
\[ x_*
   = X_0 - \log^7 N, \]
so that by Lemma \ref{lem:X-diff}, we have
\[ \P \left[ X_{N \log^3 N} \le x_* \right]
   \le N^{- c \log N}. \]

We first find a suitable replacement for $\tau$. 
We will use the shorthand
\[ \tilde{X}_0 = \frac{X_0 - \rho_c N}{\sqrt{N}}. \]
The lower bound on $Y_0$ implies the following lower bound on $S_0$:
\eqnspl{e:S_0-lb}{
   S_0 
   &= Y_0 - (1+\lambda) X_0 + \lambda N \\
   &\ge (1+\lambda)(a+\eps/2)\sqrt{N \log N} 
       - (1+\lambda) \left[ \frac{\lambda}{1+\lambda} N 
       + \tilde{X}_0 \sqrt{N} \right]
       + \lambda N \\
   &= \sqrt{N} (1+\lambda) \left[ \left( a + \frac{\eps}{2} \right) \sqrt{\log N} 
      - \tilde{X}_0 \right] \\
   &\ge \sqrt{N \log N} (1+\lambda) \left[ a + \frac{\eps}{2} - 2A - 1 \right]. }
Put 
\[ \bar{\alpha}_N
   := - (1+\lambda) \left[ \tilde{X}_0 - \frac{\log^7 N}{\sqrt{N}} \right], \]
and let 
\[ \bar{\tau}'
   := \inf \Big\{ t \ge 0 : S_t/\sqrt{N} \le \bar{\alpha}_N \Big\}. \]
We claim that 
\[ \Big\{ \tau < \sigma_2,\, \tau \le \lfloor N \log^3 N \rfloor \Big\} 
   \subset \Big\{ \bar{\tau}' < \sigma_2,\, \bar{\tau}' \le \lfloor N \log^3 N \rfloor \Big\} 
           \cup \Big\{ X_{\lfloor N \log^3 N \rfloor} \le x_* \Big\}. \]
Indeed, if $Y_t = 0$ for some $0 \le t \le \lfloor N \log^3 N \rfloor$, but 
$X_{\lfloor N \log^3 N \rfloor} > x_*$, then we have
\eqnsplst{
   S_t
   &= Y_t - (1+\lambda) X_t + \lambda N
   \le - (1+\lambda) X_{\lfloor N \log^3 N \rfloor} + \lambda N \\
   &< - (1+\lambda) x_* + \lambda N 
   = - (1+\lambda) X_0 + \lambda N + (1 + \lambda) \log^7 N \\
   &= - (1+\lambda) \sqrt{N} \left[ \tilde{X}_0 - \frac{\log^7 N}{\sqrt{N}} \right]
   = \sqrt{N} \bar{\alpha}_N. }
Therefore, it is sufficient to show an upper bound of the form
\[ \P \Big[ \bar{\tau}' < \sigma_2,\, \bar{\tau}' \le \lfloor N \log^3 N \rfloor \Big]
   \le C N^{-1/2-\eta}. \]

By the optional stopping theorem, we have
\eqnspl{e:ost-U-bar}{
  \varphi(2) - \varphi(S_0/\sqrt{N})
  &= U_0 
  = \E [ U_{\bar{\tau}' \wedge \sigma_2 \wedge \zeta \wedge \lfloor N \log^3 N \rfloor} ] \\
  &= \E \left[ \tilde{\varphi} \left( \frac{S_{\bar{\tau}' \wedge \sigma_2 \wedge \zeta 
    \wedge \lfloor N \log^3 N \rfloor}}{\sqrt{N}} \right) 
    \exp \left( - \sum_{t=0}^{\bar{\tau}' \wedge \sigma_2 \wedge \zeta 
    \wedge \lfloor N \log^3 N \rfloor - 1} e(t) \right) \right]. }
Again, the sum of the terms $e(t)$ is $O(N^{-1/2} \log^{7/2} N)$, due to \eqref{e:et-bound},
and hence the exponential is $1 + o(1)$. We show that we can again ignore
$\zeta$ inside the expectation. Due to Lemma \ref{lem:X-diff}, we have
$\P [ \zeta < \lfloor N \log^3 N \rfloor ] \le N^{-c \log N}$. On the subset 
$\{ \zeta < \bar{\tau}' \wedge \sigma_2 \wedge \lfloor N \log^3 N \rfloor \}$ 
of this event, we have that $S_\zeta/\sqrt{N}$ falls in the interval 
$[\bar{\alpha}_N,1]$, and on this interval, as $N \to \infty$, we have
\eqn{e:phi-sup-bnd-bar}{
   \sup_{x \in [\bar{\alpha}_N,1]} \tilde{\varphi}(x) 
   \sim \varphi(|\bar{\alpha}_N|)
   = O (N^{O(1)}), }
since $\tilde{X}_0 = O (\sqrt{\log N})$.
Thus we have that the right hand side of \eqref{e:ost-U-bar} equals
\eqnspl{e:ost-U2-bar}{
  &O(N^{O(1)}) N^{-c \log N}
    + (1+o(1)) \E \left[ \tilde{\varphi} \left( \frac{S_{\bar{\tau}' \wedge \sigma_2  
    \wedge \lfloor N \log^3 N \rfloor}}{\sqrt{N}} \right) \right] \\
  &\quad = o(1) + (1+o(1)) \Bigg\{ \E \left[ \tilde{\varphi}  
    \left( \frac{S_{\bar{\tau}'}}{\sqrt{N}} \right)
    \mathbf{1}_{\bar{\tau}' < \sigma_2,\, \bar{\tau}' \le \lfloor N \log^3 N \rfloor} \right] \\
  &\quad\qquad + \E \left[ \tilde{\varphi}  \left( \frac{S_{\sigma_2}}{\sqrt{N}} \right)
    \mathbf{1}_{\sigma_2 < \bar{\tau}',\, \sigma_2 \le \lfloor N \log^3 N \rfloor} \right] \\
  &\quad\qquad + \E \left[ \tilde{\varphi} 
    \left( \frac{S_{\lfloor N \log^3 N \rfloor}}{\sqrt{N}} \right)
    \mathbf{1}_{\lfloor N \log^3 N \rfloor < \bar{\tau}' \wedge \sigma_2} \right] \Bigg\}. }
For the last term in the right hand side, we again use that, due to Lemma \ref{lem:exit-big}(i), 
we have $\P [ \sigma_2 \wedge \bar{\tau}' > N \log^3 N] \le N^{-c \log N}$, and using 
\eqref{e:phi-sup-bnd} this implies that this term is $o(1)$.
For the term involving $S_{\bar{\tau}'}$, we use that the downward jumps of $S_t$ are of size
$1$, and therefore, $\bar{\alpha}_N - N^{-1/2} \le S_{\bar{\tau}'}/\sqrt{N} \le \bar{\alpha}_N$. 
This implies that this term is at least 
\eqnsplst{ 
   &\tilde{\varphi}(\bar{\alpha}_N) \P [ \bar{\tau}' < \sigma_2,\, \bar{\tau}' \le N \log^3 N ] \\
   &\quad \ge \tilde{\varphi}(\bar{\alpha}_N) \left( \P [ \bar{\tau}' < \sigma_2 ] 
       - \P [ \sigma_2 \wedge \bar{\tau}' > N \log^3 N ] \right) \\
   &\quad \ge \tilde{\varphi}(\bar{\alpha}_N) \left( \P [ \bar{\tau}' < \sigma_2 ] 
       -  N^{- c \log N} \right) \\
   &\quad \ge (1+o(1)) \tilde{\varphi}(\bar{\alpha}_N) \P [ \bar{\tau}' < \sigma_2 ]. }
(Here we assumed that $\P [ \bar{\tau}' < \sigma_2 ] \ge N^{-(c/2) \log N}$, say, because
when this is not the case, the statement of the proposition holds automatically.)

Finally, we need to estimate from below the term involving $S_{\sigma_2}$. The main 
contribution will be that on the event $\sqrt{N} \le S_{\sigma_2} \le \sqrt{N} + M_N$ (recall that 
$M_N = \log^2 N$). On this event, we have the lower bound
\eqnsplst{
   &\tilde{\varphi} \left( 1 + \frac{M_N}{\sqrt{N}} \right) 
   \P \left[ \sigma_2 < \bar{\tau}',\, \sigma_2 \le \lfloor N \log^3 N \rfloor,\,
      1 \le \frac{S_{\sigma_2}}{\sqrt{N}} \le 1 + \frac{M_N}{\sqrt{N}} \right] \\
   &\quad = (1+o(1)) \tilde{\varphi}(1)
      \Big\{ \P \left[ \sigma_2 < \bar{\tau}' \right]
      - \P \left[ \sigma_2 \wedge \bar{\tau}' > \lfloor N \log^3 N \rfloor \right] \\
   &\quad\qquad - \P \left[ \text{$\exists$ $0 \le t \le \lfloor N \log^3 N \rfloor$: 
      $\Delta S_t > M_N$} \right] \Big\} \\
   &\quad = (1+o(1)) \tilde{\varphi}(1)
      \left\{ 1 - \P \left[ \bar{\tau}' < \sigma_2 \right]
      - N^{-c \log N} \right\} \\
   &\quad = (1+o(1)) \tilde{\varphi}(1)
      \left[ 1 + o(1) - \P \left[ \bar{\tau}' < \sigma_2 \right] \right]. }
On the event 
\[ \left\{ 1 + \frac{M_N}{\sqrt{N}} < \frac{S_{\sigma_2}}{\sqrt{N}} \right\} \]
we simply use that $\tilde{\varphi}(x) = \varphi(2) - \varphi(x) \ge 0$, as long as 
$x \le 2$, and that we defined $\tilde{\varphi}(x) = 0$ for $x > 2$, 
and lower bound the contribution on this event by $0$.

From \eqref{e:ost-U-bar}, \eqref{e:ost-U2-bar} and the above estimates, we get that 
\eqnsplst{
   &\varphi(2) - \varphi(S_0/\sqrt{N}) \\
   &\quad \ge (1+o(1)) (\varphi(2) - \varphi(1) ) ( 1 + o(1) - \P [ \bar{\tau}' < \sigma_2 ] ) 
       + o(1) \\
   &\quad\qquad + (1 + o(1)) (\varphi(2) - \varphi(\bar{\alpha}_N)) 
       \P [ \bar{\tau}' < \sigma_2 ]. }
Canceling the $\varphi(2)$ terms and rearranging yields:
\[ \varphi(-S_0/\sqrt{N}) - \varphi(-1) + o(1)
   \ge \P [ \bar{\tau}' < \sigma_2 ] \left( \varphi(|\bar{\alpha}_N|)(1+o(1)) - \varphi(-1) \right). \]
As $N \to \infty$, the term in parentheses in the right hand side is positive, and hence,
using the bound \eqref{e:S_0-lb} on $S_0$, we get
\eqnsplst{
   \P [ \bar{\tau}' < \sigma_2 ] 
   &\le \frac{\varphi(-S_0/\sqrt{N}) - \varphi(-1) + o(1)}{\varphi(|\bar{\alpha}_N|)(1+o(1)) - 
       \varphi(-1)} \\
   &\le \frac{\varphi \left( (1+\lambda) \left[ \tilde{X}_0 
       - \left( a + \frac{\eps}{2} \right) \sqrt{\log N} \right] \right) 
       - \varphi(-1) + o(1)}{\varphi(|\bar{\alpha}_N|)(1+o(1)) - \varphi(-1)}. }
We show that the right hand side is at most $c N^{-1/2-\eta}$,
which will complete the proof of the proposition. First, the terms $\varphi(-1)$ in the
numerator and denominator can be neglected, since both $|\bar{\alpha}_N| \to \infty$ and
\[ \tilde{X}_0 - \left( a + \frac{\eps}{2} \right) \sqrt{\log N} 
   \ge (a + \eps)\sqrt{\log N} - \left( a + \frac{\eps}{2} \right) \sqrt{\log N} 
   \to \infty. \]
On the one hand, in the denominator we have
\eqnsplst{
   \varphi(|\bar{\alpha}_N|) 
   \ge \varphi \left( (1+\lambda) \tilde{X}_0 \right)
   \sim \frac{\lambda}{(1+\lambda) \tilde{X}_0}
        \exp \left( \frac{(1+\lambda)^2}{2 \lambda} \tilde{X}_0^2 \right),
   \quad \text{as $N \to \infty$.} }
On the other hand, in the numerator we have
\eqnsplst{
  &\varphi \left( (1+\lambda) \left[ \tilde{X}_0 
       - \left( a + \frac{\eps}{2} \right) \sqrt{\log N} \right] \right) \\
  &\quad \sim \frac{\lambda}{(1+\lambda) \left[ \tilde{X}_0 
       - \left( a + \frac{\eps}{2} \right) \sqrt{\log N} \right]}
        \exp \left( \frac{(1+\lambda)^2}{2 \lambda} 
        \left( \tilde{X}_0 - \left( a + \frac{\eps}{2} \right) \sqrt{\log N} \right)^2 \right). }
The ratio of the factors in front of the exponentials is bounded by $C(\eps)$.
Using that $(1+\lambda)^2 / 2 \lambda = 1/2a^2$, the ratio of the exponentials equals
\[ \exp \left( \frac{1}{2 a^2} \left[ - 2 \tilde{X}_0 
     \left( a + \frac{\eps}{2} \right) \sqrt{\log N}
     + \left( a + \frac{\eps}{2} \right)^2 \log N \right] \right). \]
Using that $\tilde{X}_0 \ge (a+\eps) \sqrt{\log N}$, the right hand side is at most
\[ \exp \left( - \frac{\log N}{2} \frac{(a+\eps/2)^2}{a^2} \right) 
   \le N^{-1/2-\eta(\eps,\lambda)}. \]
\end{proof}

\section{Appendix}

\subsection{Proof of Lemma \ref{lem:sums}}
\label{ssec:proof-sums}

\begin{proof}[Proof of Lemma \ref{lem:sums}]
(i) We begin by showing \eqref{e:sum-1st}. Adding the $\ell = 0$ term (equal to $1$) to the summation, we have the sum
\begin{align*}
 \sum_{\ell=0}^n \frac{n!}{(n-\ell)!} \frac{(m-\ell)!}{m!}
&= \frac{1}{\binom{m}{n}} \sum_{\ell=0}^n \binom{m-\ell}{n-\ell} \\
&= \frac{1}{\binom{m}{n}} \sum_{\ell=0}^n \binom{m-n+\ell}{\ell} \\
&= \frac{\binom{m+1}{n}}{\binom{m}{n}} = \frac{m+1}{m-n+1}. 
\end{align*}
Here the penultimate step uses a well-known identity for binomial coefficients that is easy to verify. Moving to the proof of \eqref{e:sum-2nd}, we obtain by similar manipulations
\begin{align*}
\sum_{\ell=0}^n \ell \frac{n!}{(n-\ell)!} \frac{(m-\ell)!}{m!}
&= \frac{1}{\binom{m}{n}} \sum_{\ell=0}^n \ell \binom{m-\ell}{n-\ell}  \\ & = \frac{1}{\binom{m}{n}} \sum_{\ell=0}^n (n-\ell) \binom{m-n+\ell}{\ell} \\
&= \frac{n \binom{m+1}{n}}{\binom{m}{n}} 
- \frac{1}{\binom{m}{n}} \sum_{\ell=1}^n \ell \binom{m-n+\ell}{\ell} \\
&= \frac{n (m+1)}{m-n+1} 
- \frac{1}{\binom{m}{n}} \sum_{\ell=1}^n (m-n+1) \binom{m-n+\ell}{\ell-1} \\
&= \frac{n (m+1)}{m-n+1} 
- \frac{m-n+1}{\binom{m}{n}} \sum_{\ell=0}^{n-1} \binom{m-n+1+\ell}{\ell} \\
&= \frac{n (m+1)}{m-n+1} 
- \frac{(m-n+1) \binom{m+1}{n-1}}{\binom{m}{n}} \\
%&= \frac{n (m+1)}{m-n+1} 
%- \frac{(m-n+1) (m+1) n}{(m-n+1) (m-n+2)} \\
&= \frac{n (m+1)}{(m-n+1) (m-n+2)}. 
\end{align*}
\end{proof}

%\bibliographystyle{amsplain}
%\bibliography{yourbibfilename}

% add below the content of your .bbl file produced by bibtex.

\printbibliography

@article{AsselahForienGaudilliere2024,
  author    = {Amine Asselah and Nicolas Forien and Alexandre Gaudillière},
  title     = {The critical density for activated random walks is always less than 1},
  journal   = {Annals of Probability},
  volume    = {52},
  number    = {5},
  year      = {2024},
  doi       = {10.1214/23-AOP1674},
  url       = {https://doi.org/10.1214/23-AOP1674}
}

@article {Asselah,
	AUTHOR = {Asselah, Amine and Rolla, Leonardo T. and Schapira, Bruno},
	TITLE = {Diffusive bounds for the critical density of activated random
	walks},
	JOURNAL = {ALEA Lat. Am. J. Probab. Math. Stat.},
	FJOURNAL = {ALEA. Latin American Journal of Probability and Mathematical
	Statistics},
	VOLUME = {19},
	YEAR = {2022},
	NUMBER = {1},
	PAGES = {457--465},
	ISSN = {1980-0436},
	MRCLASS = {60K35},
	MRNUMBER = {4394304},
	DOI = {10.30757/alea.v19-17},
	URL = {https://doi.org/10.30757/alea.v19-17},
}

@article{Bak,
	title = {Self-organized criticality: An explanation of the 1/f noise},
	author = {Bak, Per and Tang, Chao and Wiesenfeld, Kurt},
	journal = {Phys. Rev. Lett.},
	volume = {59},
	issue = {4},
	pages = {381--384},
	numpages = {0},
	year = {1987},
	month = {Jul},
	publisher = {American Physical Society},
	doi = {10.1103/PhysRevLett.59.381},
	url = {https://link.aps.org/doi/10.1103/PhysRevLett.59.381}
}

@article {Basu,
	AUTHOR = {Basu, Riddhipratim and Ganguly, Shirshendu and Hoffman,
	Christopher},
	TITLE = {Non-fixation for conservative stochastic dynamics on the line},
	JOURNAL = {Comm. Math. Phys.},
	FJOURNAL = {Communications in Mathematical Physics},
	VOLUME = {358},
	YEAR = {2018},
	NUMBER = {3},
	PAGES = {1151--1185},
	ISSN = {0010-3616,1432-0916},
	MRCLASS = {60K35 (37A60)},
	MRNUMBER = {3778354},
	MRREVIEWER = {Hasan\ Ak\i n},
	DOI = {10.1007/s00220-017-3059-7},
	URL = {https://doi.org/10.1007/s00220-017-3059-7},
}

@article{Bristiel,
  author    = {Alexandre Bristiel and Justin Salez},
  title     = {Separation cutoff for activated random walks},
  journal   = {Annals of Applied Probability},
  volume    = {34},
  number    = {6},
  year      = {2024},
  pages     = {2351--2371},
  doi       = {10.1214/24-AAP2089},
  url       = {https://doi.org/10.1214/24-AAP2089}
}

@article {Cabezas,
	AUTHOR = {Cabezas, M. and Rolla, L. T. and Sidoravicius, V.},
	TITLE = {Non-equilibrium phase transitions: activated random walks at
	criticality},
	JOURNAL = {J. Stat. Phys.},
	FJOURNAL = {Journal of Statistical Physics},
	VOLUME = {155},
	YEAR = {2014},
	NUMBER = {6},
	PAGES = {1112--1125},
	ISSN = {0022-4715,1572-9613},
	MRCLASS = {82C26 (82C41)},
	MRNUMBER = {3207731},
	MRREVIEWER = {Keivan\ Aghababaei Samani},
	DOI = {10.1007/s10955-013-0909-3},
	URL = {https://doi.org/10.1007/s10955-013-0909-3},
}

@article{CampaillaForien2025,
  title        = {Stochastic Sandpile model: exact sampling and complete graph},
  author       = {Campailla, Concetta and Forien, Nicolas},
  journal      = {arXiv preprint arXiv:2507.01572},
  year         = {2025},
  month        = {Jul},
  doi          = {10.48550/arXiv.2507.01572},
  eprint       = {2507.01572},
  archivePrefix= {arXiv},
  primaryClass = {math.PR}
}

@article {Dharsurvey,
    AUTHOR = {Dhar, Deepak},
     TITLE = {Theoretical studies of self-organized criticality},
   JOURNAL = {Phys. A},
  FJOURNAL = {Physica A. Statistical Mechanics and its Applications},
    VOLUME = {369},
      YEAR = {2006},
    NUMBER = {1},
     PAGES = {29--70},
      ISSN = {0378-4371},
   MRCLASS = {82C27 (82-02 82B20 82C03 82C20 82C41)},
  MRNUMBER = {2246566},
MRREVIEWER = {Antal\ A.\ J\'{a}rai},
       DOI = {10.1016/j.physa.2006.04.004},
       URL = {https://doi.org/10.1016/j.physa.2006.04.004},
}

@article {Dickman,
	AUTHOR = {Dickman, Ronald and Rolla, Leonardo T. and Sidoravicius,
	Vladas},
	TITLE = {Activated random walkers: facts, conjectures and challenges},
	JOURNAL = {J. Stat. Phys.},
	FJOURNAL = {Journal of Statistical Physics},
	VOLUME = {138},
	YEAR = {2010},
	NUMBER = {1-3},
	PAGES = {126--142},
	ISSN = {0022-4715,1572-9613},
	MRCLASS = {82C22 (60K35 82C80)},
	MRNUMBER = {2594894},
	MRREVIEWER = {Giambattista\ Giacomin},
	DOI = {10.1007/s10955-009-9918-7},
	URL = {https://doi.org/10.1007/s10955-009-9918-7},
}

@book {Durrett,
	AUTHOR = {Durrett, Richard},
	TITLE = {Stochastic calculus},
	SERIES = {Probability and Stochastics Series},
	NOTE = {A practical introduction},
	PUBLISHER = {CRC Press, Boca Raton, FL},
	YEAR = {1996},
	PAGES = {x+341},
	ISBN = {0-8493-8071-5},
	MRCLASS = {60H05 (60J60 60J65)},
	MRNUMBER = {1398879},
	MRREVIEWER = {S.\ Ramasubramanian},
}

@article{FeyLevineWilson,
  title = {Approach to criticality in sandpiles},
  author = {Fey, Anne and Levine, Lionel and Wilson, David B.},
  journal = {Phys. Rev. E},
  volume = {82},
  issue = {3},
  pages = {031121},
  numpages = {14},
  year = {2010},
  month = {Sep},
  publisher = {American Physical Society},
  doi = {10.1103/PhysRevE.82.031121},
  url = {https://link.aps.org/doi/10.1103/PhysRevE.82.031121}
}

@misc{Forien,
	title={Active Phase for Activated Random Walks on the Lattice in all Dimensions}, 
	author={Nicolas Forien and Alexandre Gaudillière},
	year={2022},
	eprint={2203.02476},
	archivePrefix={arXiv},
	primaryClass={math.PR}
}

@article {RollaSurv,
	AUTHOR = {Rolla, Leonardo T.},
	TITLE = {Activated random walks on {$\Bbb{Z}^d$}},
	JOURNAL = {Probab. Surv.},
	FJOURNAL = {Probability Surveys},
	VOLUME = {17},
	YEAR = {2020},
	PAGES = {478--544},
	ISSN = {1549-5787},
	MRCLASS = {60K35 (82C22 82C26)},
	MRNUMBER = {4152668},
	DOI = {10.1214/19-PS339},
	URL = {https://doi.org/10.1214/19-PS339},
}

@unpublished{JaraiElvidge,
  author = {J\'arai, Antal A. and Elvidge, Miles W.},
  title  = {Scaling and partial universality of the height zero probability in the 2D Abelian sandpile},
  note   = {In preparation},
  year   = {2026}
}

@misc{kaufman2025asymptoticbehaviorcriticaldensity,
      title={Asymptotic behavior of the critical density of activated random walk},
      author={Harley Kaufman and Josh Meisel},
      year={2025},
      eprint={2512.00720},
      eprinttype={arXiv},
      primaryClass={math.PR},
      url={https://arxiv.org/abs/2512.00720},
}

@article{JungeKaufmanMeisel2025,
  AUTHOR = {Junge, Matthew and Kaufman, Harley and Meisel, Josh},
  TITLE = {Mean-field limit for activated random walk on the integer lattice},
  JOURNAL = {arXiv preprint},
  FJOURNAL = {arXiv},
  VOLUME = {arXiv:2509.10616},
  YEAR = {2025},
  PAGES = {1--??},
  URL = {https://arxiv.org/abs/2509.10616}
}

@article {Jsurvey,
	AUTHOR = {J\'{a}rai, Antal A.},
	TITLE = {Sandpile models},
	JOURNAL = {Probab. Surv.},
	FJOURNAL = {Probability Surveys},
	VOLUME = {15},
	YEAR = {2018},
	PAGES = {243--306},
	ISSN = {1549-5787},
	MRCLASS = {60K35 (82C24)},
	MRNUMBER = {3857602},
	MRREVIEWER = {Daniel\ Boivin},
	DOI = {10.1214/14-PS228},
	URL = {https://doi.org/10.1214/14-PS228},
}

@article {HoffmanRichey,
	AUTHOR = {Hoffman, Christopher and Richey, Jacob and Rolla, Leonardo T.},
	TITLE = {Active phase for activated random walk on {$\Bbb {Z}$}},
	JOURNAL = {Comm. Math. Phys.},
	FJOURNAL = {Communications in Mathematical Physics},
	VOLUME = {399},
	YEAR = {2023},
	NUMBER = {2},
	PAGES = {717--735},
	ISSN = {0010-3616,1432-0916},
	MRCLASS = {60K35 (60K50 82C41)},
	MRNUMBER = {4576759},
	MRREVIEWER = {Elena\ Kosygina},
	DOI = {10.1007/s00220-022-04572-x},
	URL = {https://doi.org/10.1007/s00220-022-04572-x},
}

@article{LevineLiang,
  author  = {Lionel Levine and Feng Liang},
  title   = {Exact sampling and fast mixing of activated random walk},
  journal = {Electronic Journal of Probability},
  volume  = {29},
  year    = {2024},
  number  = {none},
  doi     = {10.1214/24-EJP1220},
  url     = {https://doi.org/10.1214/24-EJP1220}
}

@article {LevineSilvestri,
	AUTHOR = {Levine, Lionel and Silvestri, Vittoria},
	TITLE = {Universality conjectures for activated random walk},
	JOURNAL = {Probab. Surv.},
	FJOURNAL = {Probability Surveys},
	VOLUME = {21},
	YEAR = {2024},
	PAGES = {1--27},
	ISSN = {1549-5787},
	MRCLASS = {60K35 (82B26 82B27 82C22 82C26)},
	MRNUMBER = {4718500},
	DOI = {10.1214/24-ps25},
	URL = {https://doi.org/10.1214/24-ps25},
}

@article {RathToth,
	AUTHOR = {R\'{a}th, Bal\'{a}zs and T\'{o}th, B\'{a}lint},
	TITLE = {Erd{\H{o}}s-{R}\'{e}nyi random graphs {$+$} forest fires {$=$}
	self-organized criticality},
	JOURNAL = {Electron. J. Probab.},
	FJOURNAL = {Electronic Journal of Probability},
	VOLUME = {14},
	YEAR = {2009},
	PAGES = {no. 45, 1290--1327},
	ISSN = {1083-6489},
	MRCLASS = {60K35 (05C80)},
	MRNUMBER = {2511285},
	MRREVIEWER = {Alessandra\ Bianchi},
	DOI = {10.1214/EJP.v14-653},
	URL = {https://doi.org/10.1214/EJP.v14-653},
}

@article {Rolla,
	AUTHOR = {Rolla, Leonardo T. and Sidoravicius, Vladas},
	TITLE = {Absorbing-state phase transition for driven-dissipative
	stochastic dynamics on {${\Bbb Z}$}},
	JOURNAL = {Invent. Math.},
	FJOURNAL = {Inventiones Mathematicae},
	VOLUME = {188},
	YEAR = {2012},
	NUMBER = {1},
	PAGES = {127--150},
	ISSN = {0020-9910,1432-1297},
	MRCLASS = {60K35 (82C22)},
	MRNUMBER = {2897694},
	MRREVIEWER = {Andrew\ R.\ Wade},
	DOI = {10.1007/s00222-011-0344-5},
	URL = {https://doi.org/10.1007/s00222-011-0344-5},
}

@article {Rolla3,
	AUTHOR = {Rolla, Leonardo T. and Sidoravicius, Vladas and Zindy,
	Olivier},
	TITLE = {Universality and sharpness in activated random walks},
	JOURNAL = {Ann. Henri Poincar\'{e}},
	FJOURNAL = {Annales Henri Poincar\'{e}. A Journal of Theoretical and
	Mathematical Physics},
	VOLUME = {20},
	YEAR = {2019},
	NUMBER = {6},
	PAGES = {1823--1835},
	ISSN = {1424-0637,1424-0661},
	MRCLASS = {82C41},
	MRNUMBER = {3956161},
	MRREVIEWER = {V.\ Shcherbakov},
	DOI = {10.1007/s00023-019-00797-0},
	URL = {https://doi.org/10.1007/s00023-019-00797-0},
}

@article {Rolla2,
	AUTHOR = {Rolla, Leonardo T. and Tournier, Laurent},
	TITLE = {Non-fixation for biased activated random walks},
	JOURNAL = {Ann. Inst. Henri Poincar\'{e} Probab. Stat.},
	FJOURNAL = {Annales de l'Institut Henri Poincar\'{e} Probabilit\'{e}s et
	Statistiques},
	VOLUME = {54},
	YEAR = {2018},
	NUMBER = {2},
	PAGES = {938--951},
	ISSN = {0246-0203,1778-7017},
	MRCLASS = {60K35 (60G50 82C20 82C22 82C26 82C41)},
	MRNUMBER = {3795072},
	DOI = {10.1214/17-AIHP827},
	URL = {https://doi.org/10.1214/17-AIHP827},
}

@article {Shellef,
	AUTHOR = {Shellef, Eric},
	TITLE = {Nonfixation for activated random walks},
	JOURNAL = {ALEA Lat. Am. J. Probab. Math. Stat.},
	FJOURNAL = {ALEA. Latin American Journal of Probability and Mathematical
	Statistics},
	VOLUME = {7},
	YEAR = {2010},
	PAGES = {137--149},
	ISSN = {1980-0436},
	MRCLASS = {60K35},
	MRNUMBER = {2651824},
	MRREVIEWER = {Jonathon\ R.\ Peterson},
}

@article {Sidoravicius,
	AUTHOR = {Sidoravicius, Vladas and Teixeira, Augusto},
	TITLE = {Absorbing-state transition for stochastic sandpiles and
	activated random walks},
	JOURNAL = {Electron. J. Probab.},
	FJOURNAL = {Electronic Journal of Probability},
	VOLUME = {22},
	YEAR = {2017},
	PAGES = {Paper No. 33, 35},
	ISSN = {1083-6489},
	MRCLASS = {60K35 (82C22 82C41)},
	MRNUMBER = {3646059},
	DOI = {10.1214/17-EJP50},
	URL = {https://doi.org/10.1214/17-EJP50},
}

@article {Stauffer,
	AUTHOR = {Stauffer, Alexandre and Taggi, Lorenzo},
	TITLE = {Critical density of activated random walks on transitive
	graphs},
	JOURNAL = {Ann. Probab.},
	FJOURNAL = {The Annals of Probability},
	VOLUME = {46},
	YEAR = {2018},
	NUMBER = {4},
	PAGES = {2190--2220},
	ISSN = {0091-1798,2168-894X},
	MRCLASS = {82C41 (60K35 82C26)},
	MRNUMBER = {3813989},
	DOI = {10.1214/17-AOP1224},
	URL = {https://doi.org/10.1214/17-AOP1224},
}

@article {Taggi1,
	AUTHOR = {Taggi, Lorenzo},
	TITLE = {Absorbing-state phase transition in biased activated random
	walk},
	JOURNAL = {Electron. J. Probab.},
	FJOURNAL = {Electronic Journal of Probability},
	VOLUME = {21},
	YEAR = {2016},
	PAGES = {Paper No. 13, 15},
	ISSN = {1083-6489},
	MRCLASS = {60K35 (82C22 82C26 82C41)},
	MRNUMBER = {3485355},
	MRREVIEWER = {Patr\'{\i}cia\ Gon\c{c}alves},
	DOI = {10.1214/16-EJP4275},
	URL = {https://doi.org/10.1214/16-EJP4275},
}

@article {Taggi2,
	AUTHOR = {Taggi, Lorenzo},
	TITLE = {Active phase for activated random walks on {$\Bbb{Z}^d$},
	{$d\geq3$}, with density less than one and arbitrary sleeping
	rate},
	JOURNAL = {Ann. Inst. Henri Poincar\'{e} Probab. Stat.},
	FJOURNAL = {Annales de l'Institut Henri Poincar\'{e} Probabilit\'{e}s et
	Statistiques},
	VOLUME = {55},
	YEAR = {2019},
	NUMBER = {3},
	PAGES = {1751--1764},
	ISSN = {0246-0203,1778-7017},
	MRCLASS = {82C41 (60K35)},
	MRNUMBER = {4010950},
	DOI = {10.1214/18-aihp933},
	URL = {https://doi.org/10.1214/18-aihp933},
}

@article {Taggi3,
	AUTHOR = {Taggi, Lorenzo},
	TITLE = {Essential enhancements in {A}belian networks: {C}ontinuity and
	uniform strict monotonicity},
	JOURNAL = {Ann. Probab.},
	FJOURNAL = {The Annals of Probability},
	VOLUME = {51},
	YEAR = {2023},
	NUMBER = {6},
	PAGES = {2243--2264},
	ISSN = {0091-1798,2168-894X},
	MRCLASS = {82C22 (60K35 82C26)},
	MRNUMBER = {4666295},
	DOI = {10.1214/23-aop1647},
	URL = {https://doi.org/10.1214/23-aop1647},
}

%\begin{thebibliography}{99}

%\bibitem{doob} Doob, J. L.: Heuristic approach to the Kolmogorov-Smirnov
%  theorems. \emph{Ann. Math. Statistics} \textbf{20}, (1949), 393--403.
%  \MR{0030732}

%\bibitem{gnekol} Gnedenko, B. V. and Kolmogorov, A. N.: Limit distributions for
%  sums of independent random variables. Translated and annotated by K. L.
%  Chung. With an Appendix by J. L. Doob. \emph{Addison-Wesley}, Cambridge,
%  1954. ix+264 pp. \MR{0062975}

%\bibitem{ito} It\^o, K.: Multiple Wiener integral. \emph{J. Math. Soc. Japan}
%  \textbf{3}, (1951), 157--169. \MR{0044064}

%\bibitem{levy} L\'evy, P.: Sur certains processus stochastiques homog\`enes.
%  \emph{Compositio Math.} \textbf{7}, (1939), 283--339. \MR{0000919}

%\bibitem{grisha} Perelman, G.: The entropy formula for the Ricci flow and its
%  geometric applications, \ARXIV{math.DG/0211159}

%\bibitem{smisch} Smirnov, S. and Schramm, O.: On the scaling limits of planar
%  percolation, \ARXIV{1101.5820}

%\end{thebibliography}

%%%%%%%%%%%%%%%%%%%%%%%%%%%%%%%%%%%%%%%%%%%%%%%%%%%%%%%%%%%%%%%%%%%
%%                                                               %%
%% You may add acknowledgments (optional).                       %%
%%                                                               %%
%%%%%%%%%%%%%%%%%%%%%%%%%%%%%%%%%%%%%%%%%%%%%%%%%%%%%%%%%%%%%%%%%%%
\paragraph*{Acknowledgments.}
We thank Matthew Junge for pointing out the reference \cite{kaufman2025asymptoticbehaviorcriticaldensity}.

%%%%%%%%%%%%%%%%%%%%%%%%%%%%%%%%%%%%%%%%%%%%%%%%%%%%%%%%%%%%%%%%%%%
%%                                                               %%
%% You have reached the end of your document.                    %%
%%                                                               %%
%%%%%%%%%%%%%%%%%%%%%%%%%%%%%%%%%%%%%%%%%%%%%%%%%%%%%%%%%%%%%%%%%%%

\end{document}